\crefname{defn}{Definition}{Definition}
\crefname{hypothesis}{Hypothesis}{Hypotheses}
\crefname{example}{Example}{Example}
\Crefname{ALC@unique}{Line}{Lines}
\title{Refined TSSOS \thanks{\funding{This work was funded by the the Deutsche Forschungs gemeinschaft (DFG, German Research Foundation) - 314838170, GRK 2297 MathCoRe.}}}
\author{Daria Shaydurova\thanks{Department of Mathematics, OvGU, Magdeburg, Germany 
		(\email{sager@ovgu.de}, \email{kaibel@ovgu.de}, \email{daria.shaydurova@ovgu.de}).}
	\and Volker Kaibel \footnotemark[1]
	\and Sebastian Sager \footnotemark[1]}
\DeclareMathOperator{\supp}{supp}
\DeclareMathOperator{\conv}{conv}
\DeclareMathOperator{\New}{New}
\begin{document}

\maketitle

\begin{abstract}
	The moment-sum of squares hierarchy by Lasserre has become an established technique for solving polynomial optimization problems. It provides a monotonically increasing series of tight bounds, but has well-known scalability limitations. For structured optimization problems, the term-sparsity SOS (TSSOS) approach scales much better due to block-diagonal matrices, obtained by completing the connected components of adjacency graphs. This block structure can be exploited by semidefinite programming solvers, for which the overall runtime then depends heavily on the size of the largest block. However, already the first step of the TSSOS hierarchy may result in large diagonal blocks. We suggest a new approach that refines TSSOS iterations using combinatorial optimization and results in block-diagonal matrices with reduced maximum block sizes. Numerical results on a benchmark library show the large potential for computational speedup for unconstrained and constrained polynomial optimization problems, while obtaining almost identical bounds in comparison to established methods.

\end{abstract}

\begin{keywords}
	Polynomial optimization, sum of squares, Lasserre hierarchy, TSSOS, integer programming

\end{keywords}

\begin{MSCcodes}
14P10, 90C25, 12D15, 12Y05, 90C10
\end{MSCcodes}

\section{Introduction}

Consider the polynomial optimization problem (POP):
\begin{equation} \label{eq:Q1}
	\theta^* := \inf_{\mathbf{x}} \{f(\mathbf{x}) \; : \; \mathbf{x} \in \mathbf{K}\}, 
\end{equation}
where $f(\mathbf{x}) \in \mathbb{R}[\mathbf{x}]$ is a polynomial and $\mathbf{K} \subseteq \mathbb{R}^n$ is the basic semialgebraic set
\begin{equation}
	\mathbf{K} = \set{\mathbf{x} \in \mathbb{R}^n | g_j(\mathbf{x}) \geq 0, j = 1, \ldots, m },
\end{equation}
for some polynomials $g_j(\mathsf{x}) \in \mathbb{R}[\mathbf{x}]$, $j = 1,\ldots,m$. The moment-SOS (sum of squares) hierarchy by Lasserre \cite{Lasserre1} based on Putinar's certificate of positivity on $\mathbf{K}$ \cite{Putinar} is an established approach for solving this class of problems. It results in a hierarchy of  semidefinite programing (SDP) relaxations of \cref{eq:Q1}. The assosiated monotone sequence of optimal values converges to $\theta^*$ from below and the convergence is finite \emph{generically} \cite{Nie}. However, in view of the present status of SDP solvers, the moment-SOS hierarchy is limited to problems of modest size. 

There are several existing ways to address this scalability issue. One possibility is to use weaker certificates of positivity such as Krivine-Stengle's certificate \cite{Krivine,Stengle} producing a hierarchy of linear programming (LP) relaxations. Although modern LP solvers can solve huge problems with millions of variables and constraints, it has been shown that LP-relaxations provide less accurate bounds and in general have only asymptotic, not finite convergence \cite{Lasserre2}. Alternative approaches based on weaker positivity certificates are, for instance, DSOS \cite{Ahmadi}, SDSOS \cite{Ahmadi} and BSOS \cite{Lasserre3}.

Another possibility to overcome the scalability limitations is to exploit sparsity, which is often present in large-scale instances of \cref{eq:Q1}. If each polynomial in the definition of $\mathbf{K}$ involves only a few variables, and the polynomial $f$ is a sum of polynomials, each containing a few variables only, the computational cost might be reduced by considering correlative sparsity patterns \cite{CorSparsity1,CorSparsity2}, where the variables are partitioned into blocks according to the maximal cliques of the chordal extension of the graph, whose nodes correspond to the variables and there is an edge between two nodes if and only if these two variables appear in the same term of the objective polynomial $f$ or in the same polynomial $g_j$ from $\mathbf{K}$. However, if $f$ has a term involving all variables or some constraint $g_j$ contains all variables, the problem does not fulfill the correlative sparsity pattern. This means that the correlative spasity pattern fails on many fairly sparse POPs.

Instead of considering sparsity from the perspective of variables, one can exploit sparsity from the perspective of terms. The TSSOS approach from \cite{TSSOS,TSSOS2} as well as the chordal-TSSOS from \cite{ChordalTSSOS} associate a  so-called term sparsity graph with the POP. The nodes of this graph are monomials. Two nodes are connected via an edge if the product of the corresponding monomials appears in the supports of polynomials involved in the POP or is a monomial square. These methods are iterative, where each iteration consists of two successive operations: (i) a support extension operation and (ii) either a block-closure operation on adjacency matrices in the case of TSSOS or a chordal extension operation in the case of chordal-TSSOS. This two-step procedure results in a moment-SOS hierarchy with block-diagonal SDP matrices for TSSOS and quasi block-diagonal SDP matrices for chordal-TSSOS. 

Although the final iterative step of the TSSOS hierarchy is guaranteed to return the same bound as the dense moment-SOS relaxation, in practice it often happens that the same optimal value is achieved at an earlier step, even at the first one. However, for some fairly sparse polynomials, already the first iterative step of the TSSOS hierarchy gives a matrix with large diagonal blocks, so that the corresponding sparse moment-SOS relaxation might be rather expensive to solve. That is why we provide a new approach that exploits a block-diagonal matrix returned by the $k$-th iterative step of TSSOS and relies on Integer Programming (IP) to generate a new block-diagonal matrix with smaller blocks that are chosen with the goal of weakening the bound as few as possible. The maximal size of blocks is controlled using a parameter, which allows some level of flexibility. As mentioned in \cref{rem:chordal_refined} the new approach can also be used within the chordal-TSSOS method. 

The paper is organized as follows: \Cref{sec:notation} contains all necessary notation as well as the basics of the TSSOS method, summarized for the convenience of non-expert readers, experts may omit this section. In \cref{sec:reformulation}, we slightly reformulate the TSSOS procedure for generating block-diagonal matrices with the purpose of reducing the computational cost of the corresponding IP problem. In \cref{sec:refined}, we explain the idea of our approach and present the algorithm. In \cref{sec:num_res}, we test the proposed approach on randomly generated polynomials and compare its performance with the TSSOS and chordal-TSSOS methods. In \cref{sec:conclusion}, we provide conclusion and outlook on our approach. 

\section{Notation and Preliminaries}

\label{sec:notation}

Let $\mathbf{x} = (x_1, \ldots, x_n)$ be a tuple of variables and $\mathbb{R}[\mathbf{x}] = \mathbb{R}[x_1, \ldots, x_n]$ be the ring of real $n$-variate polynomials. A polynomial $f \in \mathbb{R}[\mathbf{x}]$ can be written as $f(\mathbf{x}) = \sum_{\mathbf{\alpha} \in \mathscr{A}} f_{\mathbf{\alpha}} \mathbf{x}^{\mathbf{\alpha}}$ with $f_{\mathbf{\alpha}} \in \mathbb{R}$, $\mathbf{x}^{\mathbf{\alpha}} = x_1^{\alpha_1} \cdots x_n^{\alpha_n}$ and $\mathscr{A} \subseteq \mathbb{N}^n$. The support of $f$ is defined by $\supp(f) = \set{\mathbf{\alpha} \in \mathscr{A} | f_\mathbf{\alpha} \neq 0}$ and the convex hull of $\mathscr{A}$ is denoted by $\conv(\mathscr{A})$. For a nonempty finite set $\mathscr{A} \subseteq \mathbb{N}^n$, let $\mathscr{P}(\mathscr{A})$ be the set of polynomials in $\mathbb{R}[\mathbf{x}]$ whose supports are contained in $\mathscr{A}$, i.e., $\mathscr{P}(\mathscr{A}) = \set{f \in \mathbb{R}[\mathbf{x}]  |  \supp(f) \subseteq \mathscr{A}}$. We use $|\cdot|$ to denote the cardinality of a set. For $\mathscr{A}_1, \mathscr{A}_2 \subseteq \mathbb{N}^n$, let $\mathscr{A}_1+\mathscr{A}_2 := \set{\mathbf{\alpha}_1 + \mathbf{\alpha}_2  |  \mathbf{\alpha}_1 \in \mathscr{A}_1, \mathbf{\alpha}_2 \in \mathscr{A}_ 2}$.  For any $\mathbf{\alpha} \in \mathbb{N}^n$, we define $(\alpha)_2 := ((\alpha_1 \mod 2), \ldots, (\alpha_n \mod 2)) \in \mathbb{Z}_2^n$ with $\mathbb{Z}_2$ being the ring of integers modulo 2 and refer to $(\alpha)_2$ as the parity type of $\alpha$. We also use the same notation for any subset $\mathscr{A} \subseteq \mathbb{N}^n$, i.e., $(\mathscr{A})_2 := \set{(\alpha)_2  | \alpha \in \mathscr{A}} \subseteq \mathbb{Z}_2^n$.

For $d \in \mathbb{N}$, let $\mathbb{N}_d^n := \set{\mathbf{\alpha} = (\alpha_i) \in \mathbb{N}^n  | \sum_{i=1}^{n} \alpha_i \leq d}$ and assume that $f \in \mathscr{P}(\mathbb{N}_{2d}^n)$. The \emph{sum of squares} (SOS) condition $f(\mathbf{x}) = \sum_{i=1}^{t} f_i(\mathbf{x})^2$, $f_1(\mathbf{x}), \ldots, f_t(\mathbf{x}) \in \mathbb{R}[\mathbf{x}]$ is equivalent to the existence of a positive semidefinite matrix $Q$ (called a \emph{Gram matrix} \cite{Choi}) such that 
\begin{equation}\label{eq:SOScond}
	f(\mathbf{x}) = \left(\mathbf{x}^{\mathbb{N}_d^n}\right)^T Q  \mathbf{x}^{\mathbb{N}_d^n},
\end{equation} 
where $\mathbf{x}^{\mathbb{N}_d^n}$ is the $|\mathbb{N}_d^n|$-dimensional column vector consisting of the monomials $\mathbf{x}^{\mathbf{\alpha}}$, $\mathbf{\alpha} \in \mathbb{N}_d^n$. We refer to $\mathbf{x}^{\mathbb{N}_d^n}$ as the \emph{standard monomial basis}. The computational cost of checking sum of squares conditions of multivariate polynomials can be reduced using the \emph{Newton polytope method}, where the set $\mathbb{N}_d^n$ in \cref{eq:SOScond} is replaced by
\begin{equation}
	\mathscr{B} = \frac{1}{2} \cdot \New(f) \cap \mathbb{N}^n \subseteq \mathbb{N}_d^n,
\end{equation}
with $\New(f) = \conv(\{\mathbf{\alpha} : \mathbf{\alpha} \in \supp(f)\})$ being the \emph{Newton polytope} of $f$. See Theorem 1 in \cite{Reznick}. Replacing  $\mathbb{N}_d^n$ with $\mathscr{B}$ reduces the size of the corresponding matrix $Q$ thus simplifying the semidefinite program to be solved. We further abuse notation and denote the monomial basis $\mathbf{x}^{\mathscr{B}}$ by the exponent set $\mathscr{B}$.

For a positive integer $r$, the set of $r \times r$ symmetric matrices is denoted by $\mathbb{S}^r$ and the set of $r \times r$ positive semidefinite (PSD) matrices is denoted by $\mathbb{S}^r_+$. For matrices $A,B \in \mathbb{S}^r$, let $A \circ B \in \mathbb{S}^r$ denote the Hadamard, or entrywise, product of $A$ and $B$, defined by $[A \circ B]_{ij} = A_{ij} B_{ij}$. Let $\mathbb{Z}_2^{r \times r}$ with $\mathbb{Z}_2 := \set{0,1}$ be the set of $r \times r$ binary matrices. The support of a binary matrix $B \in \mathbb{S}^r \cap \mathbb{Z}_2^{r \times r}$ is the set of locations of nonzero entries, i.e.,
\begin{equation}
	\supp(B) := \set{(i,j) \in [r] \times [r]  | B_{ij} = 1},
\end{equation}
where $[r] = \set{1, \ldots, r}$. For a symmetric binary matrix $B \in \mathbb{S}^r \cap \mathbb{Z}_2^{r \times r}$, we define the set of PSD matrices with sparsity pattern represented by $B$ as
\begin{equation}
	\mathbb{S}_+^r(B) := \set{Q \in \mathbb{S}_+^r  |  B \circ Q = Q}.
\end{equation}

For a binary matrix $B \in \mathbb{S}^r \cap \mathbb{Z}_2^{r \times r}$, let $R \subseteq [r] \times [r]$ be the adjacency relation of $B$, i.e., $(i,j) \in R$ if and only if $B_{ij} = 1$. The \emph{transitive closure} of $R$, denoted by $\overline{R}$, is the smallest relation that contains $R$ and is transitive, i.e., $(i,j),(j,k) \in \overline{R}$ implies $(i,k) \in \overline{R}$. The \emph{block-closure} $\overline{B} \in \mathbb{S}^r \cap \mathbb{Z}_2^{r \times r}$ of $B$ is defined as
	\begin{equation}\label{eq:block_closure}
	\overline{B}_{ij} := \begin{cases} 1, & (i,j) \in \overline{R}, \\
		0, & \text{otherwise}.
	\end{cases}
\end{equation}
The definition of $\overline{B}$ has a graphical description: if $G$ is the adjacency graph of $B$, then $\overline{B}$ is the adjacency matrix of the graph obtained by completing the connected components of $G$ to complete subgraphs. The matrix $\overline{B}$ is block-diagonal up to permutation and each of its blocks corresponds to a connected component of $G$.
\begin{example}
	\label{ex:block_closure}
	Let us consider the matrix
	\begin{equation*}
	B = \begin{bmatrix}
		1 & 0 & 1 & 1 & 0 \\
		0 & 1 & 0 & 1 & 0 \\ 
		1 & 0 & 1 & 0 & 0 \\ 
		1 & 1 & 0 & 1 & 0 \\ 		
		0 & 0 & 0 & 0 & 1
	\end{bmatrix}
	\end{equation*}
	The adjacency graph $G$ of $B$ has two connected components: $\set{1, 2, 3, 4}$ and $\set{5}$. Completion of these connected components to complete subgraphs results in the graph $\overline{G}$, whose adjacency matrix $\overline{B}$ has two blocks of size 4 and 1 corresponding to the connected components of $G$. The graphs $G$, $\overline{G}$ as well as the matrix $\overline{B}$ are given in \cref{fig:blockclosure}.
	
	\begin{figure}[tbhp]
		$\vcenter{\hbox{\includegraphics[trim = 150 100 130 70, clip, width=0.33\linewidth]{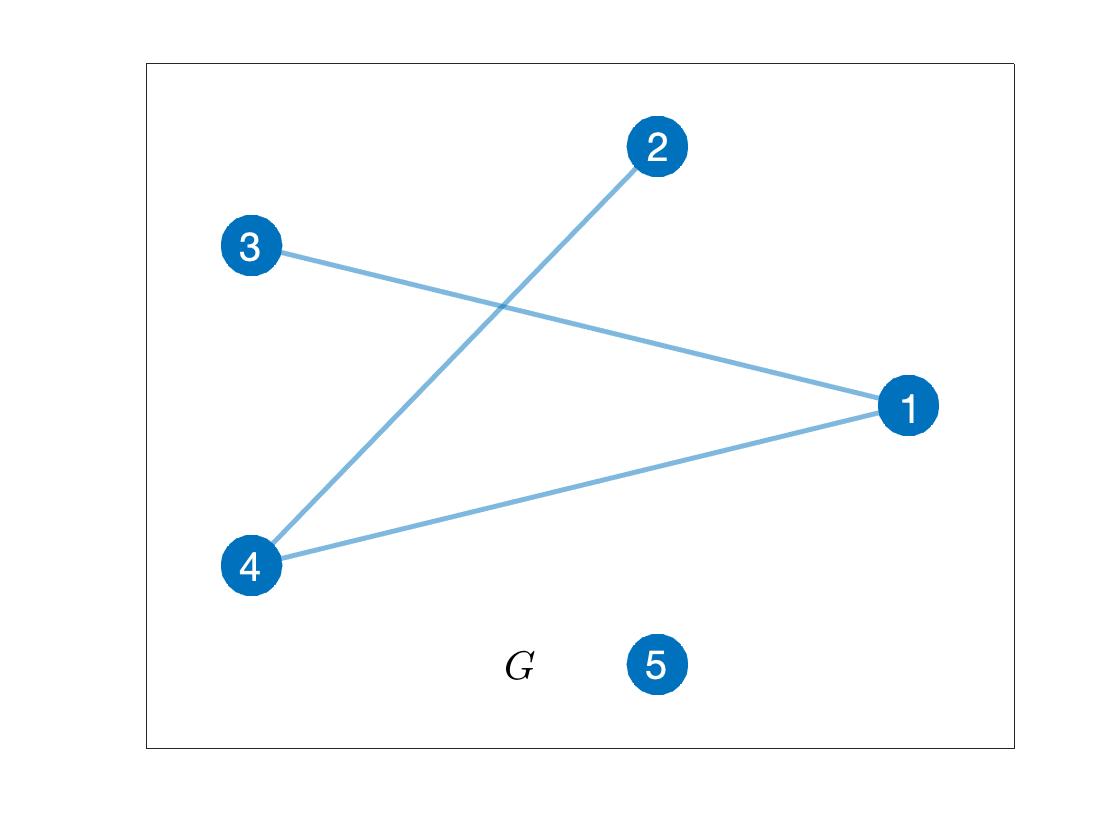}}} \quad \vcenter{\hbox{\includegraphics[trim = 150 100 130 70, clip, width=0.33\linewidth]{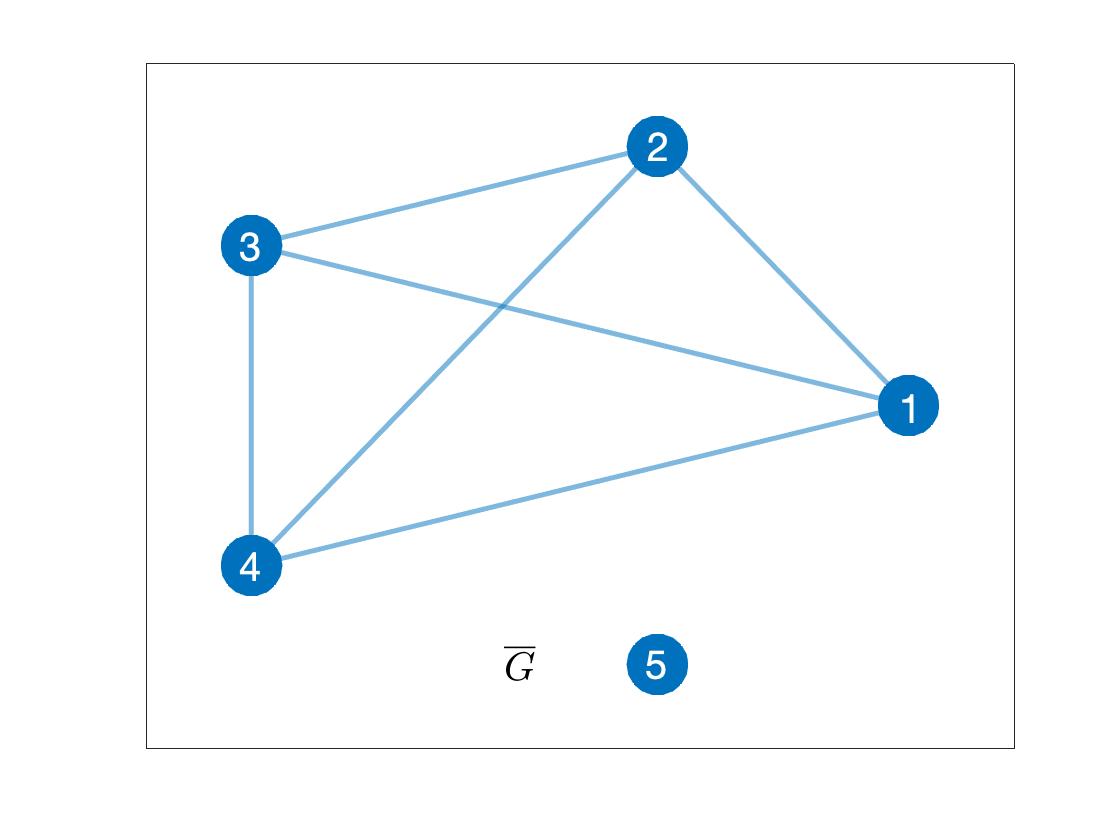}}}$ $\quad \overline{B} = \begin{bmatrix}
			1 & 1 & 1 & 1 & 0 \\
			1 & 1 & 1 & 1 & 0 \\ 
			1 & 1 & 1 & 1 & 0 \\ 
			1 & 1 & 1 & 1 & 0 \\ 		
			0 & 0 & 0 & 0 & 1
		\end{bmatrix} $
		\caption{The graphs $G$, $\overline{G}$ and the matrix $\overline{B}$ from Example \ref{ex:block_closure}}
		\label{fig:blockclosure}
	\end{figure}	
\end{example}

The rest of this section is dedicated to the basics of the TSSOS approach. Before we proceed with the description of the method, we first list some necessary definitions.

\emph{The Riesz linear functional:} Given a real sequence $\mathbf{y} = (y_{\mathbf{\alpha}})_{\mathbf{\alpha} \in \mathbb{N}^n} \subset \mathbb{R}$, let $L_{\mathbf{y}}: \mathbb{R}[\mathbf{x}] \to \mathbb{R}$ be the linear functional, defined by:
\begin{equation}
	f = \sum_{\mathbf{\alpha} \in \mathbb{N}^n} f_{\mathbf{\alpha}} \mathbf{x}^{\mathbf{\alpha}} \to L_{\mathbf{y}}(f) = \sum_{\mathbf{\alpha} \in \mathbb{N}^n} f_{\mathbf{\alpha}} y_{\mathbf{\alpha}}, \quad \forall f \in \mathbb{R}[\mathbf{x}].
\end{equation}

\emph{Moment matrix:} Given a sequence $\mathbf{y} = (y_{\mathbf{\alpha}})$ and a monomial basis $\mathscr{B}$, let $M_{\mathscr{B}}(\mathbf{y})$ be the real symmetric moment marix with rows and columns labeled by $\mathbf{\alpha} \in \mathscr{B}$ and constructed as follows:
\begin{equation}
	M_{\mathscr{B}}(\mathbf{y})_{\alpha \beta} := L_{\mathbf{y}}(\mathbf{x}^{\mathbf{\alpha}}\mathbf{x}^{\mathbf{\beta}}) = y_{\alpha+\beta}, \quad \forall \alpha, \beta \in \mathscr{B}.
\end{equation}
If $\mathscr{B} = \mathbb{N}_d^n$, we also denote $M_{\mathscr{B}}(\mathbf{y})$ by $M_d(\mathbf{y})$. 

\emph{Localizing matrix:} Given a polynomial $g = \sum_{\mathbf{\gamma}} g_{\mathbf{\gamma}} \mathbf{x}^{\mathbf{\gamma}} \in \mathbb{R}[\mathbf{x}]$ and an integer $d \geq 1$, the localizing matrix with respect to $\mathbf{y} = (y_{\mathbf{\alpha}})$ and $g$ is defined to be the matrix $M_d(g \mathbf{y})$ with rows and columns indexed by $\mathbf{\alpha} \in \mathbb{N}_d^n$ such that
\begin{equation}
	M_d(g \mathbf{y})_{\mathbf{\alpha} \mathbf{\beta}} := L_{\mathbf{y}}(g(\mathbf{x})\mathbf{x}^{\mathbf{\alpha}} \mathbf{x}^{\mathbf{\beta}}) = \sum_{\mathbf{\gamma}} g_{\mathbf{\gamma}} y_{\mathbf{\gamma}+\mathbf{\alpha}+\mathbf{\beta}}, \quad \forall \mathbf{\alpha}, \mathbf{\beta} \in \mathbb{N}_d^n.
\end{equation}

\subsection{The TSSOS method \cite{TSSOS}}

\label{subsec:tssos}

Consider a polynomial $f(\mathbf{x}) \in \mathbb{R}[\mathbf{x}]$ with $\mathscr{A} = \supp(f)$ and let $\mathscr{B}$ be a monomial basis with $r = |\mathscr{B}|$. Let $\mathscr{S}^{(0)} = \mathscr{A} \cup (2 \mathscr{B})$, where $2 \mathscr{B} = \set{2\mathbf{\beta} |  \mathbf{\beta} \in \mathscr{B}}$. For $k \geq 1$, the TSSOS method recursively defines binary matrices $B_{\mathscr{A}}^{(k)} \in \mathbb{S}^r \cap \mathbb{Z}_2^{r \times r}$ indexed by $\mathscr{B}$ via two successive steps:
\begin{enumerate}[label={\arabic*)}]
	\item \textbf{Support-extension}: define a binary matrix $C_{\mathscr{A}}^{(k)} \in \mathbb{S}^r \cap \mathbb{Z}_2^{r \times r}$ by
	\begin{equation} \label{eq:CA}
		[C_{\mathscr{A}}^{(k)}]_{\mathbf{\beta} \mathbf{\gamma}} := \begin{cases}
			1, & \text{if} \; \mathbf{\beta} + \mathbf{\gamma} \in \mathscr{S}^{(k-1)}, \\
			0, & \text{otherwise}.
		\end{cases}
	\end{equation}
	\item \textbf{Block-closure}: construct $\overline{C_{\mathscr{A}}^{(k)}}$ using \cref{eq:block_closure}, set $B_{\mathscr{A}}^{(k)} = \overline{C_{\mathscr{A}}^{(k)}}$ and $\mathscr{S}^{(k)} = \set{\mathbf{\beta} + \mathbf{\gamma}  |  [B_{\mathscr{A}}^{(k)}]_{\mathbf{\beta} \mathbf{\gamma}} = 1}$.
\end{enumerate}

For all $k \geq 1$, $\supp(B_{\mathscr{A}}^{(k)}) \subseteq \supp(B_{\mathscr{A}}^{(k+1)})$. Hence the sequence of binary matrices $\left(B_{\mathscr{A}}^{(k)}\right)_{k \geq 1}$ stabilizes after a finite number of steps. The stabilized matrix is denoted by $B_{\mathscr{A}}^{(*)}$.	

Let $\Sigma(\mathscr{A})$ be the set of SOS polynomials supported on $\mathscr{A}$, i.e.,
\begin{equation}\label{eq:Sigma_A}
	\Sigma(\mathscr{A}) := \set{f \in \mathscr{P}(\mathscr{A}) |  \exists Q \in \mathbb{S}_+^r \; \text{s.t.} \; f = \left(\mathbf{x}^{\mathscr{B}}\right)^T Q \mathbf{x}^{\mathscr{B}} }.
\end{equation}
For $k \geq 1$, let $\Sigma_k(\mathscr{A})$ be the subset of $\Sigma(\mathscr{A})$ whose member admits a Gram matrix with sparsity pattern represented by $B_{\mathscr{A}}^{(k)}$, i.e.,
\begin{equation} 
	\Sigma_k(\mathscr{A}) := \set{f \in \mathscr{P}(\mathscr{A})  |  \exists Q \in \mathbb{S}_+^r \left(B_{\mathscr{A}}^{(k)}\right)  \; \text{s.t.} \; f = \left(\mathbf{x}^{\mathscr{B}}\right)^T Q \mathbf{x}^{\mathscr{B}}}.
\end{equation}
In addition, let 
\begin{equation}
	\Sigma_*(\mathscr{A}) := \set{f \in \mathscr{P}(\mathscr{A})  |  \exists Q \in \mathbb{S}_+^r \left(B_{\mathscr{A}}^{(*)}\right)  \; \text{s.t.} \; f = \left(\mathbf{x}^{\mathscr{B}}\right)^T Q \mathbf{x}^{\mathscr{B}} }.
\end{equation}
It was shown in \cite{TSSOS} that for a finite set $\mathscr{A} \subseteq \mathbb{N}^n$, one has $\Sigma_*(\mathscr{A}) = \Sigma(\mathscr{A})$. So the TSSOS method gives the following chain of inclusions:
\begin{equation}\label{eq:inc}
	\Sigma_1(\mathscr{A}) \subseteq \Sigma_2(\mathscr{A}) \subseteq \cdots \subseteq \Sigma_*(\mathscr{A}) = \Sigma(\mathscr{A}).
\end{equation}

\begin{remark}
	 The block-closure operation from \cref{eq:block_closure} used in the second step of the TSSOS algorithm to obtain $B_{\mathscr{A}}^{(k)}$ can be replaced with a chordal-extension operation on the adjacency graph of $C_{\mathscr{A}}^{(k)}$. This results in the method called chordal-TSSOS. See \cite{ChordalTSSOS} for more details on this approach.
\end{remark}

\subsubsection{A block SDP hierarchy for the unconstrained case}

Consider the unconstrained polynomial optimization problem:
\begin{equation} \label{eq:P}
	\theta^* := \inf_{\mathbf{x}} \{f(\mathbf{x}) \; : \; \mathbf{x} \in \mathbb{R}^n\} \tag{P}
\end{equation}
with $f(\mathbf{x}) \in \mathbb{R}[\mathbf{x}]$. The SOS relaxation of \eqref{eq:P} is given by
\begin{equation} \label{eq:SOS}
	\theta_{sos} := \sup_{\lambda} \{\lambda \; | \; f(\mathbf{x}) - \lambda \in \Sigma(\mathscr{A})\}, \tag{SOS}
\end{equation}
with $\mathscr{A} = \{0\} \cup \supp(f)$. Raplacing $\Sigma(\mathscr{A})$ with $\Sigma_k(\mathscr{A})$ yields a hierarchy of sparse SOS relaxations of \eqref{eq:P}:
\begin{equation} \label{eq:Pk}
	(P^k)^* : \quad \theta_k := \sup_{\lambda} \{\lambda \; | \; f(\mathbf{x}) - \lambda \in \Sigma_k(\mathscr{A})\}, \quad k = 1,2,\ldots.
\end{equation}
In addition, let 
\begin{equation} \label{eq:TSSOS}
	\theta_{tssos} := \sup_{\lambda} \{\lambda \; | \; f(\mathbf{x}) - \lambda \in \Sigma_*(\mathscr{A})\}. \tag{TSSOS}
\end{equation}
It follows  from \eqref{eq:inc} that the hierarchy of sparse SOS relaxations \eqref{eq:Pk} results in the hierarchy of lower bounds for the optimum of \eqref{eq:P}:
\begin{equation}
	\theta^* \geq \theta_{sos} = \theta_{tssos} \geq \cdots \geq \theta_2 \geq \theta_1.
\end{equation}
Let $\mathscr{B}$ be the monomial basis. For each $k \geq 1$, the dual of \eqref{eq:Pk} is the following block moment problem
\begin{equation} (P^k) : \quad 
	\begin{cases} \label{eq:Pk'}
		\inf & L_{\mathbf{y}}(f)\\
		\text{s.t.} & B_{\mathscr{A}}^{(k)} \circ M_{\mathscr{B}}(\mathbf{y}) \succeq 0,\\
		& y_{\mathbf{0}} = 1. 
	\end{cases}
\end{equation}
It was proven in \cite{TSSOS} that, for each $k \geq 1$, there is no duality gap between $(P^k)^*$ and $(P^k)$.

\subsubsection{A block SDP hierarchy for the constrained case}

\label{sec:tssos_con}

Consider the constrained polynomial optimization problem:
\begin{equation} \label{eq:Q}
	\theta^* := \inf_{\mathbf{x}} \{f(\mathbf{x}) \; : \; \mathbf{x} \in \mathbf{K}\}, \tag{Q}
\end{equation}
where $f(\mathbf{x}) \in \mathbb{R}[\mathbf{x}]$ is a polynomial and $\mathbf{K} \subseteq \mathbb{R}^n$ is the basic semialgebraic set
\begin{equation}
	\mathbf{K} = \set{\mathbf{x} \in \mathbb{R}^n | g_j(\mathbf{x}) \geq 0, j = 1, \ldots, m },
\end{equation}
for some polynomials $g_j(\mathsf{x}) \in \mathbb{R}[\mathbf{x}]$, $j = 1,\ldots,m$. 

Let $d_j = \lceil \deg(g_j)/2 \rceil$, $j = 0, \ldots, m$, where $g_0 := 1$ and $$d = \max \{\lceil \deg(f) / 2\rceil, d_1, \ldots, d_m\}.$$ With $\hat{d} \geq d$ being a positive integer, the Lasserre hierarchy \cite{Lasserre1} of moment semidefinite relaxations of \ref{eq:Q} is defined by:
\begin{equation} \label{eq:las}
	\begin{cases}
		\inf & L_{\mathbf{y}}(f)\\
		\text{s.t.} & M_{\hat{d}}(\mathbf{y}) \succeq 0,\\
		& M_{\hat{d}-d_j}(g_j\mathbf{y}) \succeq 0, \; j = 1,\ldots,m, \\
		& y_{\mathbf{0}} = 1,
	\end{cases}
\end{equation}
with optimal value denoted by $\theta_{\hat{d}}$ and $\hat{d}$ called the \emph{relaxation order}.

Let $\mathscr{A} = \supp(f) \cup \bigcup_{j=1}^{m} \supp(g_j)$. Set $\mathscr{S}_{0,\hat{d}}^{(0)} = \mathscr{A} \cup (2\mathbb{N}^n_{\hat{d}})$ , $\mathscr{S}_{j,\hat{d}}^{(0)} = \emptyset$, $j = 1, \ldots, m$ and $r_j := \binom{n+\hat{d}-d_j}{\hat{d}-d_j}$. For $k \geq 1$, the TSSOS method recursively defines binary matrices $B_{j,\hat{d}}^{(k)} \in \mathbb{S}^{r_j} \cap \mathbb{Z}_2^{r_j \times r_j}$, indexed by $\mathbb{N}_{\hat{d}-d_j}^n$, $j = 0, \ldots, m$  via two successive steps:
\begin{enumerate}[label={\arabic*)}]
	\item \textbf{Support-extension}: define a binary matrix $C_{j,\hat{d}}^{(k)} \in \mathbb{S}^{r_j} \cap \mathbb{Z}_2^{r_j \times r_j}$ with rows and columns indexed by $\mathbb{N}_{\hat{d}-d_j}^n$ as
	\begin{equation*}
		[C_{j,\hat{d}}^{(k)}]_{\mathbf{\beta}\mathbf{\gamma}} := \begin{cases}
			1, & \text{if } (\supp(g_j) + \mathbf{\beta} + \mathbf{\gamma}) \cap \bigcup_{j=0}^{m} \mathscr{S}_{j,\hat{d}}^{(k-1)} \neq \emptyset, \\
			0, & \text{otherwise}.
		\end{cases}
	\end{equation*}
	\item \textbf{Block-closure}: construct $\overline{C_{j,\hat{d}}^{(k)}}$ using \cref{eq:block_closure}, set $B_{j,\hat{d}}^{(k)} = \overline{C_{j,\hat{d}}^{(k)}}$ and
	\begin{equation*}
		\mathscr{S}^{(k)} = \supp(g_j) + \set{\mathbf{\beta} + \mathbf{\gamma}  |  [B_{j,\hat{d}}^{(k)}]_{\mathbf{\beta} \mathbf{\gamma}} = 1 }.
	\end{equation*}
\end{enumerate} 
Therefore with $k \geq 1$, the TSSOS method gives a block moment relaxations of \eqref{eq:las}:
\begin{equation}\label{eq:tssosch}
	\begin{cases}
		\inf & L_{\mathbf{y}}(f)\\
		\text{s.t.} & B_{0,\hat{d}}^{(k)} \circ M_{\hat{d}}(\mathbf{y}) \succeq 0,\\
		& B_{j,\hat{d}}^{(k)} \circ  M_{\hat{d}-d_j}(g_j\mathbf{y}) \succeq 0, \; j = 1,\ldots,m, \\
		& y_{\mathbf{0}} = 1,
	\end{cases}
\end{equation}
with optimal value denoted by $\theta_{\hat{d}}^{(k)}$. By construction, for all $k \geq 1$ and $j = 0, \ldots, m$,  $\supp(B_{j,\hat{d}}^{(k)}) \subseteq \supp(B_{j,\hat{d}}^{(k+1)})$. Hence the sequence of binary matrices $\left(B_{j,\hat{d}}^{(k)}\right)_{k \geq 1}$ stabilizes for all $j$ after a finite number of steps. The stabilized matrices are denoted by $B_{j,\hat{d}}^{(*)}$, $j = 0, \ldots, m$  and the optimal value of the corresponding SDP is denoted by $\theta_{\hat{d}}^{*}$. 

It was shown in \cite{TSSOS} that for fixed $\hat{d} \geq d$, the sequence $(\theta_{\hat{d}}^{(k)})_{k \geq 1}$ of optimal values of \eqref{eq:tssosch} is monotone nondecreasing and $\theta_{\hat{d}}^{*} = \theta_{\hat{d}}$.

\section{Reformulation of the TSSOS}

\label{sec:reformulation}

In this section we present an alternative procedure for generating block-diagonal binary matrices $B_{\mathscr{A}}^{(k)}$ and $B_{j,\hat{d}}^{(k)}$, $j = 0, \ldots, m$ from the TSSOS method. Within this procedure instead of working with binary matrices $C_{\mathscr{A}}^{(k)}$ and $C_{j,\hat{d}}^{(k)}$, $j = 0, \ldots, m$  indexed by $\mathscr{B}$ and $\mathscr{B}^{(j)}$, respectively, we apply the block-closure operation to binary matrices $C^{(k)}$ and $C^{(j,k)}$, $j = 0, \ldots, m$  indexed by $\left(\mathscr{B}\right)_2$ and $\left(\mathscr{B}^{(j)}\right)_2$. Matrices $C^{(k)}$ and $C^{(j,k)}$ are further used in our approach. 

\subsection{Defining matrices $B_{\mathscr{A}}^{(k)}$ for the unconstrained case}

Let $f(\mathbf{x}) \in \mathbb{R}[\mathbf{x}]$ with $\mathscr{A} = \supp(f)$ and let $\mathscr{B}$ be a monomial basis with $r = |\mathscr{B}|$ and set $\mathscr{S}^{(0)} = \mathscr{A} \cup (2 \mathscr{B})$.  We partition the set $\mathscr{B}$ into subsets $\mathscr{B}_{\delta}$ with $\delta \in \left(\mathscr{B}\right)_2$ defined in the following way:
\begin{equation}
	\mathscr{B}_{\delta} = \set{\mathbf{\alpha} \in \mathscr{B} | (\mathbf{\alpha})_2 = \delta}.
\end{equation}
Using this notation we have $\mathscr{B} = \bigcup_{\delta \in \left(\mathscr{B}\right)_2} \mathscr{B}_{\delta}$. For $k \geq 1$, we define binary matrices $B_{\mathscr{A}}^{(k)} \in \mathbb{S}^r \cap \mathbb{Z}_2^{r \times r}$ indexed by $\mathscr{B}$ via three successive steps:
\begin{enumerate}[label={\arabic*)}]
	\item \textbf{Support-extension}: define a binary matrix $C^{(k)} \in \mathbb{S}^{r_b} \cap \mathbb{Z}_2^{{r_b}  \times {r_b} }$, $r_b = |\left(\mathscr{B}\right)_2|$, indexed by $\left(\mathscr{B}\right)_2$ as
	\begin{equation} \label{eq:C}
		[C^{(k)}]_{\mathbf{\delta}\mathbf{\sigma}} := \begin{cases}
			1, & \text{if }  (\mathscr{B}_{\delta} +  \mathscr{B}_{\sigma}) \cap \mathscr{S}^{(k-1)} \neq \emptyset, \\
			0, & \text{otherwise.}
		\end{cases}
	\end{equation}
	\item \textbf{Block-closure}: evaluate $\overline{C^{(k)}}$ using \cref{eq:block_closure}, set $B^{(k)} = \overline{C^{(k)}}$ and $\mathscr{S}^{(k)} = \set{\mathscr{B}_{\delta} + \mathscr{B}_{\sigma} | [B^{(k)}]_{\delta \sigma} = 1}$
	\item \textbf{Reconstruction of $B_{\mathscr{A}}^{(k)} \in \mathbb{S}^r \cap \mathbb{Z}_2^{r \times r}$ from $B^{(k)}$}:
	\begin{equation} \label{eq:B}
		[B_{\mathscr{A}}^{(k)}]_{\beta \gamma} = 
		\begin{cases}
			1, & \text{if }  [B^{(k)}]_{\delta \sigma} = 1 \; \text{with} \; \delta = (\beta)_2, \sigma = (\gamma)_2, \\
			0, & \text{otherwise.}
		\end{cases}
	\end{equation}
\end{enumerate}
\begin{lemma}\label{lm:refuncon}
	The three-step-procedure presented above generates the same matrices $B_{\mathscr{A}}^{(k)}$, $k \geq 1$ as the two-step-procedure  described in \cref{subsec:tssos}.
\end{lemma}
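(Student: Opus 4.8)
The plan is to show that the reformulated three-step procedure produces, at each iteration $k$, exactly the same binary matrix $B_{\mathscr{A}}^{(k)}$ as the original two-step procedure, by induction on $k$. The key observation driving everything is that whether $[C_{\mathscr{A}}^{(k)}]_{\beta\gamma} = 1$ depends on $\beta$ and $\gamma$ only through their parity types $(\beta)_2$ and $(\gamma)_2$, \emph{provided} the support set $\mathscr{S}^{(k-1)}$ is itself a union of blocks $\mathscr{B}_\delta + \mathscr{B}_\sigma$. Indeed, for fixed parity types $\delta,\sigma$, the set $\mathscr{B}_\delta + \mathscr{B}_\sigma$ is precisely the set of all sums $\beta + \gamma$ with $(\beta)_2 = \delta$, $(\gamma)_2 = \sigma$; so $\beta + \gamma \in \mathscr{S}^{(k-1)}$ holds for \emph{some} such pair iff it holds for the whole block, which is exactly the condition $(\mathscr{B}_\delta + \mathscr{B}_\sigma)\cap \mathscr{S}^{(k-1)} \neq \emptyset$ defining $[C^{(k)}]_{\delta\sigma}$. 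Hence $C_{\mathscr{A}}^{(k)}$ is the ``blow-up'' of $C^{(k)}$ along the partition $\{\mathscr{B}_\delta\}_{\delta}$, in the same way that \cref{eq:B} blows up $B^{(k)}$.

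First I would establish the base-case sanity check and the structural invariant: at $k=1$, $\mathscr{S}^{(0)} = \mathscr{A} \cup (2\mathscr{B})$, and I need that $\mathscr{S}^{(0)}$ is a union of the blocks $\mathscr{B}_\delta + \mathscr{B}_\sigma$ it meets. This requires a small argument. The part $2\mathscr{B} = \bigcup_\delta (2\mathscr{B}_\delta) \subseteq \bigcup_\delta (\mathscr{B}_\delta + \mathscr{B}_\delta)$ causes no trouble if one is careful, but in fact the cleaner route is not to demand that $\mathscr{S}^{(k-1)}$ literally be block-closed, but only that the relevant emptiness test agrees: I want $(\mathscr{B}_\delta + \mathscr{B}_\sigma) \cap \mathscr{S}^{(k-1)} \neq \emptyset$ in the reformulation to match ``$\exists\,\beta\in\mathscr{B}_\delta,\gamma\in\mathscr{B}_\sigma: \beta+\gamma\in\mathscr{S}^{(k-1)}$'' in the original. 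These two statements are \emph{literally the same}, since $\mathscr{B}_\delta + \mathscr{B}_\sigma = \{\beta+\gamma : \beta\in\mathscr{B}_\delta, \gamma\in\mathscr{B}_\sigma\}$ by definition of the sumset. So $[C_{\mathscr{A}}^{(k)}]_{\beta\gamma} = [C^{(k)}]_{(\beta)_2,(\gamma)_2}$ holds for \emph{every} $k$ for which $\mathscr{S}^{(k-1)}$ is defined by the same data in both procedures — no block-closedness hypothesis on $\mathscr{S}$ is needed at the support-extension step at all.

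Next I would push the equality through the block-closure step. Having $[C_{\mathscr{A}}^{(k)}]_{\beta\gamma} = [C^{(k)}]_{(\beta)_2,(\gamma)_2}$, I claim $[\overline{C_{\mathscr{A}}^{(k)}}]_{\beta\gamma} = [\overline{C^{(k)}}]_{(\beta)_2,(\gamma)_2}$. This is the graph-theoretic heart: the adjacency graph of $C_{\mathscr{A}}^{(k)}$ is the ``blow-up'' of the adjacency graph of $C^{(k)}$ where each vertex $\delta$ is replaced by the clique $\mathscr{B}_\delta$ (the diagonal entries $[C^{(k)}]_{\delta\delta}$ are $1$ because $2\delta$-type elements sit in $\mathscr{S}^{(0)}$, so each $\mathscr{B}_\delta$ is internally connected; I'll need to note this). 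In such a blow-up, two vertices $\beta,\gamma$ are in the same connected component iff $(\beta)_2$ and $(\gamma)_2$ are in the same connected component of the small graph — because any path downstairs lifts to a path upstairs and vice versa, using that each fiber is a clique. Completing connected components to cliques therefore commutes with the blow-up operation, giving $\overline{C_{\mathscr{A}}^{(k)}} = (\overline{C^{(k)}})$ blown up, i.e.\ $B_{\mathscr{A}}^{(k)}$ in the original equals $B_{\mathscr{A}}^{(k)}$ reconstructed via \cref{eq:B}. Finally, the new $\mathscr{S}^{(k)}$ generated from $B^{(k)}$ in step 2 of the reformulation, namely $\bigcup\{\mathscr{B}_\delta+\mathscr{B}_\sigma : [B^{(k)}]_{\delta\sigma}=1\}$, equals $\{\beta+\gamma : [B_{\mathscr{A}}^{(k)}]_{\beta\gamma}=1\}$ because of the blow-up relation between $B^{(k)}$ and $B_{\mathscr{A}}^{(k)}$; this closes the induction, feeding the identical $\mathscr{S}^{(k)}$ into iteration $k+1$.

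I expect the main obstacle to be the blow-up/connected-components commutation lemma in step three — specifically verifying cleanly that the diagonal blocks $\mathscr{B}_\delta$ are nonempty and internally fully connected so that each fiber really is a clique (needed for paths to lift), and handling the bookkeeping that $(\mathscr{B})_2$ indexes $C^{(k)}$ while $\mathscr{B}$ indexes $C_{\mathscr{A}}^{(k)}$ without the ``abuse of notation'' causing confusion. Everything else is a direct unwinding of the definitions of sumsets, parity types, and \cref{eq:block_closure}.
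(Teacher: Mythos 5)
Your overall plan (pass to the parity quotient, show each fiber is a clique, match connected components, commute block closure with the blow-up, check the sets $\mathscr{S}^{(k)}$ agree, induct on $k$) is the right shape, but the step you build it on is false: $C_{\mathscr{A}}^{(k)}$ is \emph{not} the blow-up of $C^{(k)}$, i.e.\ the entrywise identity $[C_{\mathscr{A}}^{(k)}]_{\beta\gamma}=[C^{(k)}]_{(\beta)_2(\gamma)_2}$ does not hold. The condition in \cref{eq:CA} tests membership of the \emph{specific} sum $\beta+\gamma$ in $\mathscr{S}^{(k-1)}$, while \cref{eq:C} tests an existential condition over the whole parity block $\mathscr{B}_{\delta}+\mathscr{B}_{\sigma}$; these agree only in one direction ($[C_{\mathscr{A}}^{(k)}]_{\beta\gamma}=1$ implies $[C^{(k)}]_{(\beta)_2(\gamma)_2}=1$), not entrywise. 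Concretely, in \cref{ex:ex1} the pair $\beta=(2,0,0)$, $\gamma=(3,0,0)$ has $\beta+\gamma=(5,0,0)\in\mathscr{A}$, so $[C^{(1)}]_{(0,0,0)(1,0,0)}=1$, yet $\beta'=(0,2,0)$ has the same parity type as $\beta$ and $\beta'+\gamma=(3,2,0)\notin\mathscr{S}^{(0)}$, so $[C_{\mathscr{A}}^{(1)}]_{\beta'\gamma}=0$. So your remark that ``no block-closedness hypothesis is needed at the support-extension step'' conflates the pointwise and existential conditions, and your blow-up/closure commutation lemma is then applied to a graph that is strictly larger than the actual adjacency graph of $C_{\mathscr{A}}^{(k)}$; in particular the fiber-clique property of the \emph{actual} graph is never established (you only argue $[C^{(k)}]_{\delta\delta}=1$ for the small matrix).

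The gap is repairable, and the repair is essentially the paper's proof: you do not need the two graphs to be equal, only that they induce the same connected components (equivalently, the same transitive closure of the adjacency relation). This follows from two facts: (a) each fiber $\mathscr{B}_{\delta}$ is a clique in the adjacency graph of $C_{\mathscr{A}}^{(k)}$ itself, since for $\omega,\beta\in\mathscr{B}_{\delta}$ the sum $\omega+\beta$ has parity $\mathbf{0}$ and hence lies in $2\mathscr{B}\subseteq\mathscr{S}^{(0)}\subseteq\mathscr{S}^{(k-1)}$ (using that $(\omega+\beta)/2\in\mathscr{B}$ for the bases in question); and (b) two fibers $\mathscr{B}_{\delta},\mathscr{B}_{\sigma}$ are joined by at least one edge of $C_{\mathscr{A}}^{(k)}$ if and only if $[C^{(k)}]_{\delta\sigma}=1$, which is exactly \cref{eq:C}. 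From (a) and (b), $\beta$ and $\gamma$ lie in the same component of the large graph iff $(\beta)_2$ and $(\gamma)_2$ lie in the same component of the small graph, so $\overline{C_{\mathscr{A}}^{(k)}}$ is the blow-up of $\overline{C^{(k)}}$ via \cref{eq:B}; the two descriptions of $\mathscr{S}^{(k)}$ then coincide and your induction on $k$ closes as you describe. This corrected argument is the one in the paper, which phrases (a) and (b) through transitivity of $\overline{R}$ rather than through components, so once fixed your proof is not a genuinely different route.
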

\begin{proof}
	To prove the equivalence between the two- and three-step-procedures, we need to show that if $[B_{\mathscr{A}}^{(k)}]_{\beta \gamma} = 1$ for some $\beta, \gamma \in \mathscr{B}$ with $\delta := (\beta)_2$, $\sigma := (\gamma)_2$, then $[B_{\mathscr{A}}^{(k)}]_{\omega \nu} = 1$ for all $\omega \in \mathscr{B}_{\delta}$, $\nu \in \mathscr{B}_{\sigma}$, which will imply that we can first define a binary matrix $C^{(k)}$ indexed by $\delta, \sigma \in (\mathscr{B})_2$, evaluate its block-closure $B^{(k)}$, and reconstruct $B_{\mathscr{A}}^{(k)}$ using \cref{eq:B}.
	
	For $k \geq 1$, let $R$ be the adjacency relation of $C_{\mathscr{A}}^{(k)}$. Since $B_{\mathscr{A}}^{(k)} = \overline{C_{\mathscr{A}}^{(k)}}$, $\overline{R}$ is the adjacency relation of $B_{\mathscr{A}}^{(k)}$ and $[B_{\mathscr{A}}^{(k)}]_{\beta \gamma} = 1$ implies $(\beta, \gamma) \in \overline{R}$. For all $\omega \in \mathscr{B}_{\delta}$, we have $(\omega + \beta)_2 = (\omega)_2 + (\beta)_2 = 2 \delta= \mathbf{0}$, which means $\omega + \beta \in 2 \mathscr{B}$ and, consequently, $(\omega,\beta) \in R \subseteq \overline{R}$.  Similarly, for all $\nu \in \mathscr{B}_{\sigma}$, we have $(\gamma + \nu)_2 = \mathbf{0}$ and $(\gamma,\nu) \in R \subseteq \overline{R}$. Since for all $\omega \in \mathscr{B}_{\delta}$, $(\omega,\beta), (\beta, \gamma) \in \overline{R}$, it follows from transitivity of $\overline{R}$ that $(\omega, \gamma) \in \overline{R}$ for all $\omega \in \mathscr{B}_{\delta}$, which in combination with $(\gamma,\nu) \in \overline{R}$, $\nu \in \mathscr{B}_{\sigma}$ gives $(\omega, \nu) \in \overline{R}$ for all $\omega \in \mathscr{B}_{\delta}$, $\nu \in \mathscr{B}_{\sigma}$. Applying \cref{eq:block_closure} we get $[B_{\mathscr{A}}^{(k)}]_{\omega \nu} = 1$ for all $\omega \in \mathscr{B}_{\delta}$, $\nu \in \mathscr{B}_{\sigma}$.
\end{proof}

\begin{remark}
	 Apart from the case, when all sets $\mathscr{B}_{\delta}$, $\delta \in (\mathscr{B})_2$ have cardinality one, implying $|\mathscr{B}| = \sum_{\delta \in (\mathscr{B})_2} |\mathscr{B}_{\delta}| = |(\mathscr{B})_2|$, a binary matrix $C^{(k)}$ defined in \cref{eq:C} is smaller than $C_{\mathscr{A}}^{(k)}$ defined in \cref{eq:CA}, for example, for a polynomial with the standard monomial basis $\mathscr{B} = \mathbb{N}_4^8$, we have $C_{\mathscr{A}}^{(k)} \in \mathbb{S}^{495}$ and $C^{(k)} \in \mathbb{S}^{163}$.
\end{remark}

\subsection{Defining matrices $B_{j, \hat{d}}^{(k)}$ for the constrained case}
Let $\mathscr{A} = \supp(f) \cup \bigcup_{j=1}^{m} \supp(g_j)$. For the relaxation order $\hat{d}$, set $\mathscr{S}_{0,\hat{d}}^{(0)} = \mathscr{A} \cup (2\mathbb{N}^n_{\hat{d}})$ , $\mathscr{S}_{j,\hat{d}}^{(0)} = \emptyset$, $j = 1, \ldots, m$ . Let $d_j = \lceil \deg(g_j)/2 \rceil$, $j = 0, \ldots, m$ , where $g_0 := 1$ and $r_j := \binom{n+\hat{d}-d_j}{\hat{d}-d_j}$. We partition each $\mathscr{B}^{(j)} = \mathbb{N}_{\hat{d}-d_j}^n$, $j = 0, \ldots, m$ into subsets $\mathscr{B}_{\delta}^{(j)} = \set{\alpha \in \mathscr{B}^{(j)} | (\alpha)_2 = \delta}$ with $\delta \in \left(\mathscr{B}^{(j)}\right)_2$ and define $r_{bj} := |\left(\mathscr{B}^{(j)}\right)_2|$. For $k \geq 1$, we recursively define binary matrices $B_{j,\hat{d}}^{(k)} \in \mathbb{S}^{r_j} \cap \mathbb{Z}_2^{r_j \times r_j}$, indexed by $\mathscr{B}^{(j)}$, $j = 0, \ldots, m$  via three successive steps:
\begin{enumerate}[label={\arabic*)}]
	\item \textbf{Support-extension}: define a binary matrix $C^{(j,k)} \in \mathbb{S}^{r_{bj}} \cap \mathbb{Z}_2^{r_{bj}  \times r_{bj} }$ indexed by $\left(\mathscr{B}^{(j)}\right)_2$ as
	\begin{equation} \label{eq:CC}
		[C^{(j,k)}]_{\mathbf{\delta}\mathbf{\sigma}} := \begin{cases}
			1, & \text{if }  (\supp(g_j) + \mathscr{B}^{(j)}_{\delta} +  \mathscr{B}^{(j)}_{\sigma}) \cap   \bigcup_{j=0}^{m}\mathscr{S}_{j,\hat{d}}^{(k-1)} \neq \emptyset, \\
			0, & \text{otherwise.}
		\end{cases}
	\end{equation}
	\item \textbf{Block-closure}: evaluate $\overline{C^{(j,k)}}$ using \cref{eq:block_closure}, set $B^{(j,k)} = \overline{C^{(j,k)}}$ and $$\mathscr{S}_{j,\hat{d}}^{(k)} = \supp(g_j) + \set{\mathscr{B}^{(j)}_{\delta} + \mathscr{B}^{(j)}_{\sigma} | [B^{(j,k)}]_{\delta \sigma} = 1}.$$
	\item \textbf{Reconstruction of $B_{j,\hat{d}}^{(k)} \in \mathbb{S}^{r_j} \cap \mathbb{Z}_2^{r_j \times r_j}$ from $B^{(j,k)}$}:
	\begin{equation*}
		[B_{j,\hat{d}}^{(k)}]_{\beta \gamma} = 
		\begin{cases}
			1, & \text{if }  [B^{(j,k)}]_{\delta \sigma} = 1 \; \text{with} \; \delta = (\beta)_2, \sigma = (\gamma)_2, \\
			0, & \text{otherwise.}
		\end{cases}
	\end{equation*}
\end{enumerate}

\begin{lemma}
	For the relaxation order $\hat{d}$, the three-step-procedure presented above generates the same matrices $B_{j,\hat{d}}^{(k)}$, $k \geq 1$, $j = 0, \ldots, m$ as the two-step-procedure  described in \cref{subsec:tssos}.
\end{lemma}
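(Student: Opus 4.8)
The plan is to mirror the proof of \cref{lm:refuncon} almost verbatim, replacing the role played there by the set $2\mathscr{B}$ with the analogous products in the constrained setting. The key observation is that the whole argument of \cref{lm:refuncon} only used two facts: first, that the support-extension step is driven by membership in a running support set $\mathscr{S}^{(k-1)}$ of diagonal-compatible exponents; and second, that for any $\omega, \beta$ of the same parity type one has $\omega + \beta \in 2\mathscr{B}$, hence the pair $(\omega,\beta)$ already sits in the relation $R$ before block-closure. Both facts have direct analogues here, so the proof carries over.

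First I would fix $j \in \{0,\ldots,m\}$, a relaxation order $\hat d$, and an iteration index $k \geq 1$, and let $R_j$ be the adjacency relation of $C_{j,\hat d}^{(k)}$ (the matrix from \cref{subsec:tssos}) and $\overline{R_j}$ its transitive closure, so $\overline{R_j}$ is the adjacency relation of $B_{j,\hat d}^{(k)} = \overline{C_{j,\hat d}^{(k)}}$. As in \cref{lm:refuncon}, it suffices to show that whenever $[B_{j,\hat d}^{(k)}]_{\beta\gamma} = 1$ for some $\beta,\gamma \in \mathscr{B}^{(j)}$ with $\delta := (\beta)_2$ and $\sigma := (\gamma)_2$, then $[B_{j,\hat d}^{(k)}]_{\omega\nu} = 1$ for \emph{all} $\omega \in \mathscr{B}^{(j)}_\delta$ and $\nu \in \mathscr{B}^{(j)}_\sigma$; this block-constancy is exactly what licenses defining $C^{(j,k)}$ on $(\mathscr{B}^{(j)})_2$, block-closing it, and reconstructing $B_{j,\hat d}^{(k)}$ as in step 3. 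The parity argument is unchanged: for $\omega \in \mathscr{B}^{(j)}_\delta$ we get $(\omega + \beta)_2 = 2\delta = \mathbf{0}$, so $\omega + \beta \in 2\mathscr{B}^{(j)}$. I then need that $(\omega,\beta) \in R_j$, i.e.\ that $\omega+\beta$ is a diagonal index that survives support extension at step $k$. This is where the one genuinely new point enters: in the constrained case the support-extension condition in \cref{eq:CC} tests membership of $\supp(g_j) + \omega + \beta$ in $\bigcup_{j'=0}^m \mathscr{S}_{j',\hat d}^{(k-1)}$, and for $k = 1$, $j = 0$ one has $\mathscr{S}_{0,\hat d}^{(0)} = \mathscr{A} \cup (2\mathbb{N}^n_{\hat d})$, which contains $2\mathscr{B}^{(0)}$, while for $k \geq 2$ (or $j \geq 1$) the running set $\mathscr{S}_{j',\hat d}^{(k-1)}$ is built from $\supp(g_{j'}) + (\text{sum of two exponents of equal parity on a supported edge})$, which again contains all elements of $\supp(g_{j'}) + 2\mathscr{B}^{(j')}$ once the previous iterate had at least one diagonal entry — and the diagonal is always present because $B_{j,\hat d}^{(k-1)}$ contains the identity pattern. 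So I would record as a small preliminary claim that $[C_{j,\hat d}^{(k)}]_{\omega\omega} = 1$ for all $\omega$ and all $k \geq 1$ (equivalently, every diagonal entry $\omega+\omega = 2\omega \in 2\mathscr{B}^{(j)}$ is always extended), and from this deduce $(\omega,\beta) \in R_j$ whenever $(\omega)_2 = (\beta)_2$.

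With $(\omega,\beta) \in R_j \subseteq \overline{R_j}$ and $(\gamma,\nu) \in R_j \subseteq \overline{R_j}$ for all $\omega \in \mathscr{B}^{(j)}_\delta$, $\nu \in \mathscr{B}^{(j)}_\sigma$ established, transitivity of $\overline{R_j}$ finishes the argument exactly as in \cref{lm:refuncon}: from $(\omega,\beta),(\beta,\gamma) \in \overline{R_j}$ we get $(\omega,\gamma) \in \overline{R_j}$, and combining with $(\gamma,\nu) \in \overline{R_j}$ gives $(\omega,\nu) \in \overline{R_j}$, hence $[B_{j,\hat d}^{(k)}]_{\omega\nu} = 1$ by \cref{eq:block_closure}. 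One then checks that the three-step procedure produces precisely these blocks: the entries of $C^{(j,k)}$ in \cref{eq:CC} are $1$ exactly when some pair in the corresponding parity-type block is an edge of $C_{j,\hat d}^{(k)}$, block-closure commutes with this aggregation by the block-constancy just proved, and step 3 expands back correctly; an induction on $k$ (to align the running sets $\mathscr{S}_{j,\hat d}^{(k)}$ in the two procedures) ties it together. The main obstacle — and it is minor — is the bookkeeping around the shared running set $\bigcup_{j'=0}^m \mathscr{S}_{j',\hat d}^{(k-1)}$: one must be careful that the "diagonal always extended" claim is genuinely an invariant across all blocks $j$ simultaneously, since the supports $\mathscr{S}_{j',\hat d}^{(k-1)}$ feeding block $j$ come from other blocks too; but because each $\mathscr{S}_{j',\hat d}^{(k-1)} \supseteq \supp(g_{j'}) + 2\mathscr{B}^{(j')}$ as soon as the previous adjacency matrix has a nonempty diagonal (true for all $k \geq 1$), the invariant propagates, and no essentially new idea beyond the proof of \cref{lm:refuncon} is needed.
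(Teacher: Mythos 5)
You have put your finger on exactly the right spot --- the only genuinely new point is justifying that same-parity pairs $(\omega,\beta)$ are edges of $C_{j,\hat d}^{(k)}$ --- but the way you discharge it does not hold, and this is precisely where the paper's own one-line appeal to the proof of \cref{lm:refuncon} is also being ported without change. Your preliminary claim that $[C_{j,\hat d}^{(k)}]_{\omega\omega}=1$ for all $\omega$, all $k\ge 1$ and all $j$ is false for $j\ge 1$: at $k=1$ the test in \cref{eq:CC} asks whether $\supp(g_j)+2\omega$ meets $\mathscr{S}_{0,\hat d}^{(0)}=\mathscr{A}\cup 2\mathbb{N}^n_{\hat d}$ (all other initial sets are empty), and if no monomial of $g_j$ has all even exponents this intersection can be empty (e.g.\ $g_1=x_1x_2x_3$, $\omega=(0,1,0)$ gives $(1,3,1)$, which is odd and need not lie in $\mathscr{A}$). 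Similarly, the invariant $\mathscr{S}_{j',\hat d}^{(k-1)}\supseteq\supp(g_{j'})+2\mathscr{B}^{(j')}$ is unavailable at $k=1$ for $j'=j\ge 1$ (that set is empty by definition), a single diagonal entry of the previous iterate only yields $\supp(g_{j'})+2\beta_0$ rather than all of $\supp(g_{j'})+2\mathscr{B}^{(j')}$, and falling back on $j'=0$ only works when some $\gamma\in\supp(g_j)$ has all even exponents, since only then is $\gamma+\omega+\beta$ an even vector of degree at most $2\hat d$, hence in $2\mathbb{N}^n_{\hat d}$. Note also the index mismatch: what you must exhibit is an element of $\supp(g_j)+\omega+\beta$ (same $j$) inside the union, and containments of the form $\supp(g_{j'})+2\mathscr{B}^{(j')}$ for other $j'$ do not produce one unless the parities happen to align.

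This is not a repairable bookkeeping issue inside your scheme: when no monomial of $g_j$ is even and the relevant ``partner'' parity classes are empty in $\mathscr{B}^{(j)}$, same-parity monomials need not be connected at all in $C_{j,\hat d}^{(k)}$, and the two procedures can genuinely diverge. Concretely, take $n=3$, $g_1=x_1x_2x_3$ (so $d_1=2$), $f=x_1^8+x_2^8+x_3^8+x_1^3x_2x_3$ and $\hat d=4$, so $\mathscr{B}^{(1)}=\mathbb{N}^3_2$ and its even parity class is $\{(0,0,0),(2,0,0),(0,2,0),(0,0,2)\}$ while the class of $(1,1,1)$ is empty. In the two-step graph of $C_{1,\hat d}^{(1)}$ the only edges inside the even class are $(0,0,0)$--$(2,0,0)$ (through $(3,1,1)\in\mathscr{A}$) and the loop at $(0,0,0)$ (through $(1,1,1)\in\mathscr{A}$), so after block-closure $(0,2,0)$ and $(0,0,2)$ stay in singleton blocks; the three-step procedure, however, has $[C^{(1,1)}]_{\mathbf{0}\mathbf{0}}=1$ and the reconstruction places the whole even class in one block, so the two matrices $B_{1,\hat d}^{(1)}$ differ. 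Hence the constrained equivalence needs either an additional hypothesis (for instance that every $g_j$ contains a monomial with all even exponents, such as a nonzero constant term, or more generally that for each parity class $\delta$ of $\mathscr{B}^{(j)}$ some class $\delta+(\gamma)_2$ with $\gamma\in\supp(g_j)$ is nonempty in $\mathscr{B}^{(j)}$) or a different argument handling the edges that arise only through $\mathscr{A}$; for $j=0$ your argument is sound, since $\mathscr{S}_{0,\hat d}^{(k-1)}\supseteq 2\mathbb{N}^n_{\hat d}$ does propagate exactly as you say.
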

\begin{proof}
Follows from applying the arguments from the proof of \cref{lm:refuncon} to each matrix $C_{j,\hat{d}}^{(k)}$, $k \geq 1$, $j = 0, \ldots, m$ .
\end{proof}

\subsection{Examples}

\begin{example}\label{ex:ex1}
	Consider the polynomial $f(\mathbf{x}) = x_1^6 + 3 x_2^6 + 5 x_3^6 - 3 x_1^5 + 7 x_1^3 x_3^2 + 8 x_1 x_3^4 - 6 x_1 x_3^2 + 5$. The monomial basis is $\mathscr{B} = \mathbb{N}_3^3$ with $$(\mathscr{B})_2 = \set{(0,0,0), (1,0,0), (0,1,0), (0,0,1), (1,1,0), (1,0,1), (0,1,1), (1,1,1)}.$$  For this monomial basis, the matrix $C_{\mathscr{A}}^{(1)}$ defined in \cref{eq:CA} has size $|\mathscr{B}| = \binom{3+3}{3} = 20$ and the matrix $C^{(1)}$ defined in \cref{eq:C} has size $|(\mathscr{B})_2| = 8$. The adjacency graphs $G_{\mathscr{A}}^{(1)}$ and $G^{(1)}$ of these matrices are depicted in \cref{fig:a1,fig:b1}, respectively. Since $G^{(1)}$ has six connected components, the matrix $B^{(1)}$ has six diagonal blocks: $\set{\{000,100\},\{101,001\},\{010\},\{111\},\{110\},\{011\}}$. Applying \cref{eq:B} to $B^{(1)}$ we get the matrix $B_{\mathscr{A}}^{(1)}$ with the following six blocks:
	\begin{enumerate}[label={\arabic*)}]
		\item $\set{1, x_1, x_1^2, x_2^2, x_3^2, x_1^3, x_1 x_2^2, x_1 x_3^2}$,
		\item $\set{x_3, x_3^3, x_1^2 x_3, x_2^2x_3, x_1 x_3}$,
		\item $\set{x_2, x_2^3, x_1^2 x_2, x_2 x_3^2}$,
		\item $\set{x_1 x_2 x_3}$,
		\item $\set{x_1 x_2}$,
		\item $\set{x_2 x_3}$,
	\end{enumerate}
corresponding to the connected components of the graph $G_{\mathscr{A}}^{(1)}$.
	\begin{figure}[tbhp]
		\centering
		\subfloat[$G_{\mathscr{A}}^{(1)}$]{\label{fig:a1}\includegraphics[trim = 150 100 130 70, clip, width=0.5\linewidth]{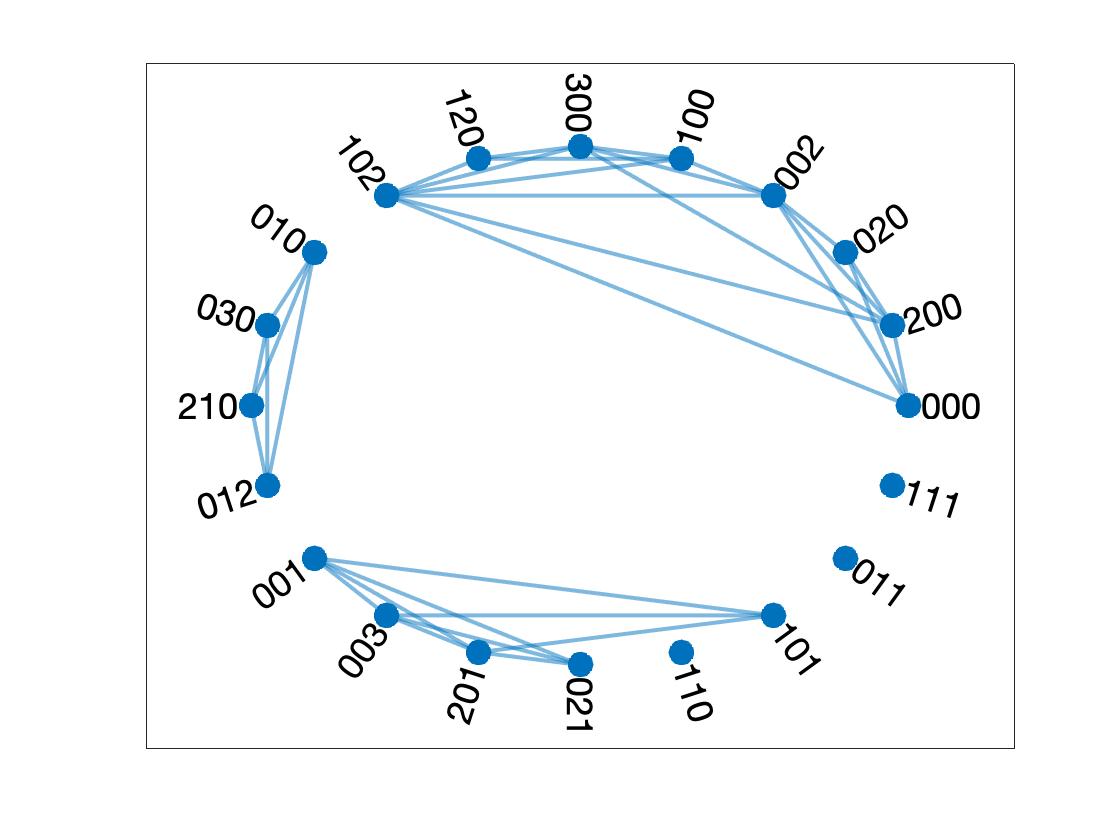}}
		\subfloat[$G^{(1)}$]{\label{fig:b1}\includegraphics[trim = 150 100 130 70, clip, width=0.5\linewidth]{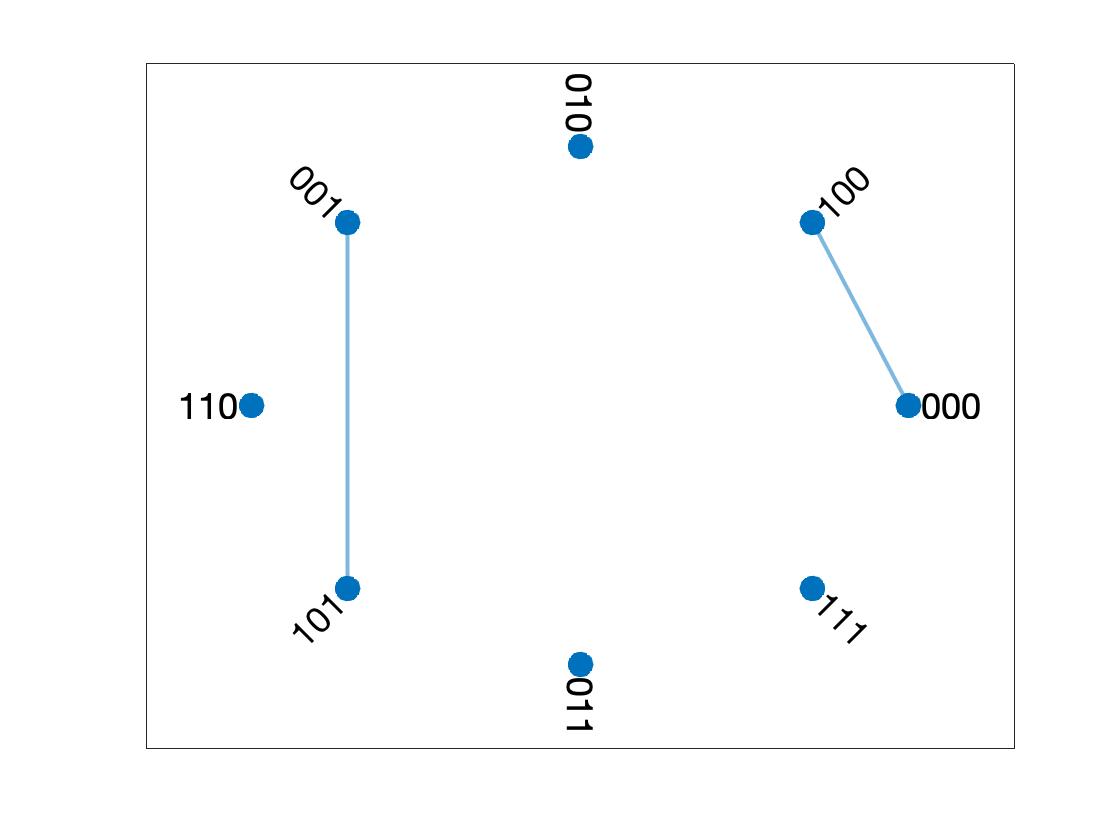}}
		\caption{Adjacency graphs of $C_{\mathscr{A}}^{(1)}$ and $C^{(1)}$ from \cref{ex:ex1}.}
		\label{fig:examp1}
	\end{figure}
	The sequence of binary matrices $\left(B_{\mathscr{A}}^{(k)}\right)_{k \geq 1}$ stabilizes at $k = 2$. Solving the corresponding SDP problems we obtain $\theta_1 = \theta_2 = \theta_{tssos} = \theta_{sos} \approx -43.8281$.
\end{example}
\begin{example}\label{ex:ex2}
	We now modify three terms of the polymonial from \cref{ex:ex1} and consider the polynomial $f(\mathbf{x}) =  x_1^6 + 3 x_2^6 + 5 x_3^6 - 3 x_1 x_2^2 x_3^2 + 7 x_2^3 x_3^2 + 8 x_1 x_3^3 - 6 x_1 x_3^2 + 5$. The monomial basis is again $\mathscr{B} = \mathbb{N}_3^3$. The adjacency graphs $G_{\mathscr{A}}^{(1)}$ and $G^{(1)}$ of matrices $C_{\mathscr{A}}^{(1)}$ and $C^{(1)}$ are depicted in \cref{fig:a2,fig:b2}, respectively. Since $G^{(1)}$ has only one connected component, $B^{(1)}$ and, consequently, $B_{\mathscr{A}}^{(1)}$ obtained using \cref{eq:B} are all-ones matrices, which coinsides with the fact that the graph $G_{\mathscr{A}}^{(1)}$ has only one connected component.
	 Solving the corresponding SDP problem we obtain $\theta_1 = \theta_{tssos} = \theta_{sos} \approx -29.6934$.
	\begin{figure}[tbhp]
		\centering
		\subfloat[$G_{\mathscr{A}}^{(1)}$]{\label{fig:a2}\includegraphics[trim = 150 100 130 70, clip, width=0.5\linewidth]{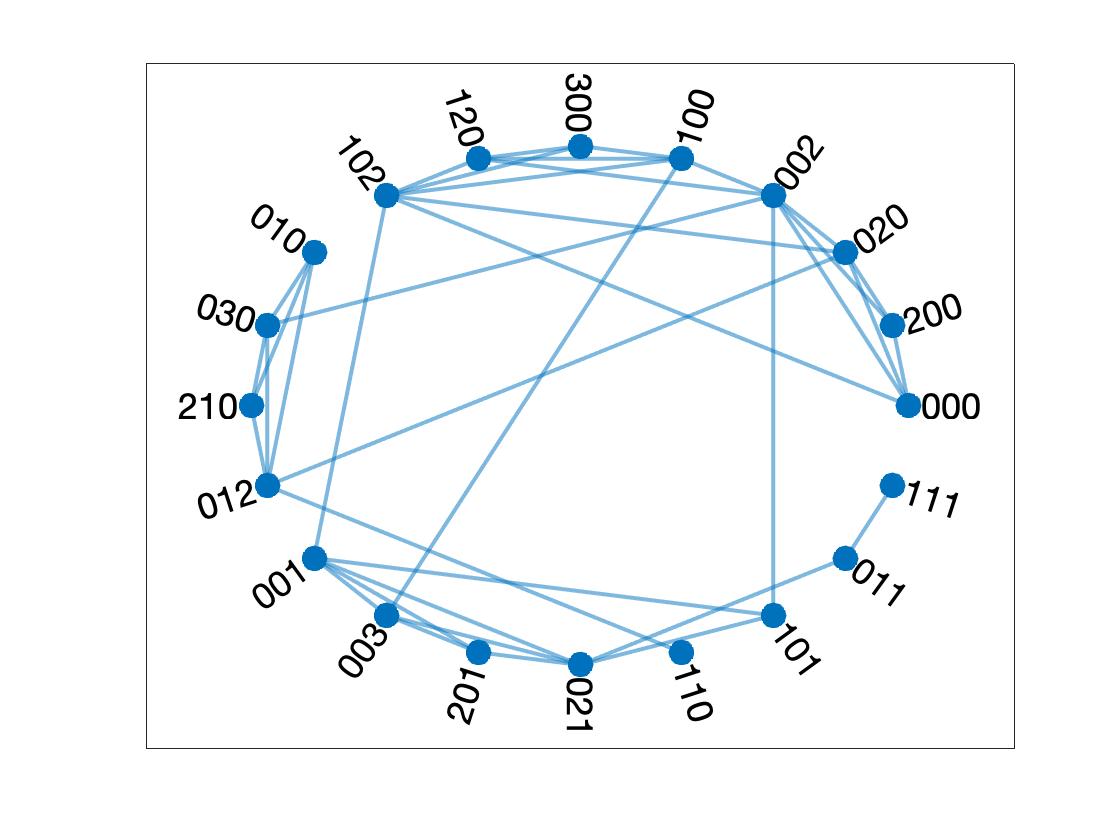}}
		\subfloat[$G^{(1)}$]{\label{fig:b2}\includegraphics[trim = 150 100 130 70, clip, width=0.5\linewidth]{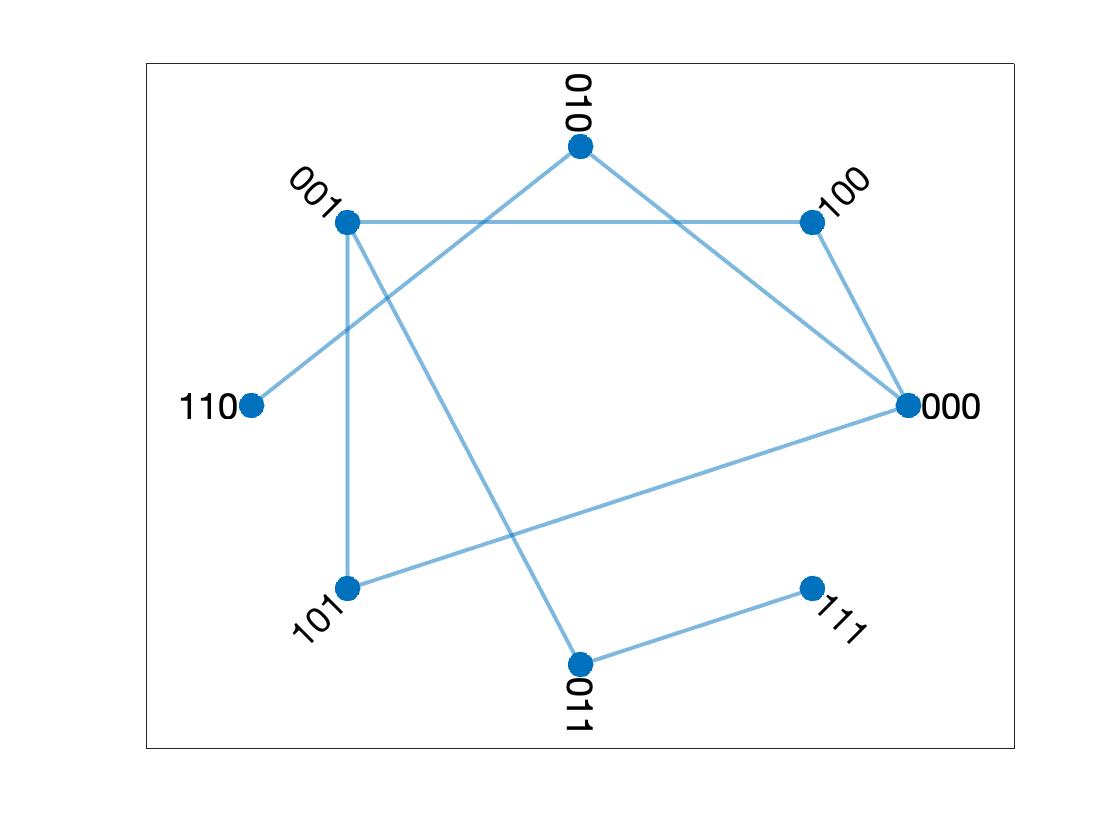}}
		\caption{Adjacency graphs of $C_{\mathscr{A}}^{(1)}$ and $C^{(1)}$ from \cref{ex:ex2}.}
		\label{fig:examp2}
	\end{figure}
\end{example}

\section{Refined TSSOS}

\label{sec:refined}

In this section, we are going to describe a new approach that exploits block-diagonal matrices returned by the TSSOS method and produces new block-diagonal matrices with reduced maximum block sizes using combinatorial optimization.  We first introduce some necessary terminology. If $P = \set{P_i : i \in [s]}$ is a partition of a set $S$, we call $\max \set{|P_i| : i \in [s]}$ the \emph{width} of the partition $P$. If a partition $P$ of a set $S$ is a refinement of a partition $P'$ of $S$, we write $P \leq P'$. 

For $k \geq 1$, let $B_{\mathscr{A}}^{(k)}$ be a binary matrix from \cref{subsec:tssos} with rows and columns indexed by elements of a monomial basis $\mathscr{B}$. Let $I^{(k)}$ be a partition of $\mathscr{B}$ induced by $B_{\mathscr{A}}^{(k)}$: two vectors $\beta, \gamma \in \mathscr{B}$ belong to the same element of $I^{(k)}$ if and only if the rows and columns indexed by $\beta$, $ \gamma$ belong to the same diagonal block of the matrix $B_{\mathscr{A}}^{(k)}$. Our approach generates a refinement $I^{(\tau)}$ of the partition $I^{(k)}$, where $\tau = k - 1 + \varepsilon$ with $\varepsilon \in (0,1)$ being a parameter that is used to control the width of the partition $I^{(\tau)}$. 

Let $B_{\mathscr{A}}^{(\tau)}$ be the block-diagonal matrix corresponding to the partition $I^{(\tau)}$, then $\supp(B_{\mathscr{A}}^{(\tau)}) \subseteq \supp(B_{\mathscr{A}}^{(k)})$.  Let $\Sigma_{\tau}(\mathscr{A})$ be the subset of $\Sigma(\mathscr{A})$ (defined in \cref{eq:Sigma_A}) whose member admits a Gram matrix with sparsity pattern represented by $B_{\mathscr{A}}^{(\tau)}$, i.e.,
\begin{equation*}
	\Sigma_{\tau}(\mathscr{A}) := \set{f \in \mathscr{P}(\mathscr{A}) | \exists Q \in \mathbb{S}_+^r \left(B_{\mathscr{A}}^{(\tau)}\right)  \; \text{s.t.} \; f = \left(\mathbf{x}^{\mathscr{B}}\right)^T Q \mathbf{x}^{\mathscr{B}} }.
\end{equation*}
Replacing $\Sigma(\mathscr{A})$ with $\Sigma_{\tau}(\mathscr{A})$ in \cref{eq:SOS} yields a sparse SOS relaxation of \cref{eq:P}:

\begin{equation} \label{eq:Ptau}
	(P^{\tau})^*: \quad  \sup_{\lambda} \{\lambda \; | \; f(\mathbf{x}) - \lambda \in \Sigma_{\tau}(\mathscr{A})\}
\end{equation}
with the dual 
\begin{equation} (P^{\tau}): \quad
	\begin{cases} \label{eq:Ptau'}
		\inf & L_{\mathbf{y}}(f)\\
		\text{s.t.} & B_{\mathscr{A}}^{(\tau)} \circ M_{\mathscr{B}}(\mathbf{y}) \succeq 0,\\
		& y_{\mathbf{0}} = 1. 
	\end{cases}
\end{equation}

\begin{proposition}
	Assume that $(P^{\tau})^*$ has a feasible solution. Then $(P^{\tau})^*$ is solvable and there is no duality gap between $(P^{\tau})$ and $(P^{\tau})^*$.
\end{proposition}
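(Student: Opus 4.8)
The plan is to deduce solvability and strong duality from standard conic/SDP duality theory, specifically by invoking a Slater-type interior-point argument on the dual moment problem $(P^{\tau})$. First I would observe that $(P^{\tau})^*$ is exactly the problem of maximizing $\lambda$ subject to $f(\mathbf{x})-\lambda \in \Sigma_{\tau}(\mathscr{A})$, and that $\Sigma_{\tau}(\mathscr{A})$ is the image under the linear map $Q \mapsto (\mathbf{x}^{\mathscr{B}})^T Q \,\mathbf{x}^{\mathscr{B}}$ of the cone $\mathbb{S}_+^r(B_{\mathscr{A}}^{(\tau)})$; hence $(P^{\tau})^*$ and $(P^{\tau})$ form a primal-dual pair of conic programs over (intersections of) the PSD cone, and $(P^{\tau})$ is precisely the Lagrangian dual of $(P^{\tau})^*$. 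Since the block-diagonal structure of $B_{\mathscr{A}}^{(\tau)}$ means $\mathbb{S}_+^r(B_{\mathscr{A}}^{(\tau)})$ is a direct product of smaller PSD cones (one per block of the partition $I^{(\tau)}$), everything reduces to the classical theory for semidefinite programs with block structure.

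The key step is to exhibit a strictly feasible point for the moment problem $(P^{\tau})$, which then yields both solvability of $(P^{\tau})^*$ and zero duality gap by the conic duality theorem (see e.g. the strong duality results used for $(P^k)$ in \cite{TSSOS}). The natural candidate is the moment sequence $\mathbf{y}$ of a measure with a density that is positive on a ball, e.g. the (normalized) restriction of Lebesgue measure or a Gaussian; for such $\mathbf{y}$ the full moment matrix $M_{\mathscr{B}}(\mathbf{y})$ is positive definite, and since $B_{\mathscr{A}}^{(\tau)} \circ M_{\mathscr{B}}(\mathbf{y})$ is a principal-block extraction of $M_{\mathscr{B}}(\mathbf{y})$ — each diagonal block being a principal submatrix of $M_{\mathscr{B}}(\mathbf{y})$ indexed by a part of $I^{(\tau)}$ — each block is positive definite, so $B_{\mathscr{A}}^{(\tau)} \circ M_{\mathscr{B}}(\mathbf{y}) \succ 0$. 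After rescaling so that $y_{\mathbf 0}=1$, this is a Slater point for $(P^{\tau})$. Then, since $(P^{\tau})^*$ is assumed feasible (its value is finite, being bounded above by $\theta^*$), the standard conic duality theorem gives that the dual $(P^{\tau})^*$ attains its optimum and $\mathrm{val}(P^{\tau}) = \mathrm{val}(P^{\tau})^*$.

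Concretely I would carry this out in the following order: (i) write $(P^{\tau})^*$ and $(P^{\tau})$ in explicit conic primal-dual form, identifying the cone as the block-direct-product PSD cone; (ii) note $(P^{\tau})^*$ feasible $\Rightarrow$ its optimal value is finite (it is $\le \theta_{sos} \le \theta^*$, since $\Sigma_\tau(\mathscr{A}) \subseteq \Sigma(\mathscr{A})$); (iii) construct the strictly feasible moment sequence as above and verify positive definiteness block by block; (iv) quote the conic duality theorem to conclude attainment in $(P^{\tau})^*$ and absence of a duality gap. The main obstacle — really the only delicate point — is step (iii): making sure the candidate $\mathbf{y}$ genuinely renders every diagonal block of $B_{\mathscr{A}}^{(\tau)} \circ M_{\mathscr{B}}(\mathbf{y})$ strictly positive definite, which hinges on the fact that a moment matrix of a measure whose support has nonempty interior is positive definite on any monomial basis, and that principal submatrices inherit this. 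Everything else is a routine invocation of convex duality, entirely parallel to the argument for $(P^k)$ already established in \cite{TSSOS}; indeed one can also simply observe that $B_{\mathscr{A}}^{(\tau)}$ has the same formal structure as an admissible TSSOS sparsity pattern matrix, so the proof of the no-duality-gap statement for $(P^k)$ applies verbatim with $B_{\mathscr{A}}^{(k)}$ replaced by $B_{\mathscr{A}}^{(\tau)}$.
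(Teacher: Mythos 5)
Your proposal is correct and follows essentially the same route as the paper: the paper's one-line proof invokes Proposition 3.1 of \cite{Lasserre1} (which is precisely the Slater-point argument via the moments of a measure with positive density) together with the observation that each block of $B_{\mathscr{A}}^{(\tau)} \circ M_{\mathscr{B}}(\mathbf{y})$ is a principal submatrix of $M_{\mathscr{B}}(\mathbf{y})$, which is exactly the strict-feasibility step you carry out explicitly before quoting conic duality.
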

\begin{proof}
	Follows from Proposition 3.1 of \cite{Lasserre1} and the fact that each block of $B_{\mathscr{A}}^{(\tau)} \circ M_{\mathscr{B}}(\mathbf{y})$ is a principal submatrix of $M_{\mathscr{B}}(\mathbf{y})$.
\end{proof}

\begin{lemma}
For $k \geq 1$, let $\theta_k$ and $\theta_{\tau}$ with $\tau = k - 1 + \varepsilon$, $\varepsilon \in (0,1)$ be optimal values of $(P^k)$ and $(P^{\tau})$, respectively, then $\theta_{\tau} \leq \theta_k$.
\end{lemma}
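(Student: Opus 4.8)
The plan is to show that the feasible region of $(P^\tau)$ contains that of $(P^k)$, so that minimizing the same objective $L_{\mathbf{y}}(f)$ over a larger set can only produce a smaller (or equal) optimal value. The key observation is that $I^{(\tau)}$ is a refinement of $I^{(k)}$, i.e. $I^{(\tau)} \le I^{(k)}$, and consequently every diagonal block of $B_{\mathscr{A}}^{(\tau)} \circ M_{\mathscr{B}}(\mathbf{y})$ is a principal submatrix of some diagonal block of $B_{\mathscr{A}}^{(k)} \circ M_{\mathscr{B}}(\mathbf{y})$. Since a principal submatrix of a PSD matrix is PSD, the constraint $B_{\mathscr{A}}^{(k)} \circ M_{\mathscr{B}}(\mathbf{y}) \succeq 0$ implies $B_{\mathscr{A}}^{(\tau)} \circ M_{\mathscr{B}}(\mathbf{y}) \succeq 0$. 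Equivalently, one may phrase this via $\supp(B_{\mathscr{A}}^{(\tau)}) \subseteq \supp(B_{\mathscr{A}}^{(k)})$ (already noted in the text) together with the block-diagonal structure of both matrices.

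First I would fix a feasible $\mathbf{y}$ for $(P^k)$, so $y_{\mathbf{0}} = 1$ and $B_{\mathscr{A}}^{(k)} \circ M_{\mathscr{B}}(\mathbf{y}) \succeq 0$. Next I would note that because $B_{\mathscr{A}}^{(k)}$ is block-diagonal up to permutation with blocks indexed by the parts of $I^{(k)}$, the PSD condition on $B_{\mathscr{A}}^{(k)} \circ M_{\mathscr{B}}(\mathbf{y})$ is equivalent to each submatrix $M_{\mathscr{B}}(\mathbf{y})[I_i \times I_i]$ being PSD, where $I_i$ ranges over the parts of $I^{(k)}$. Then, since each part of $I^{(\tau)}$ is contained in some part $I_i$ of $I^{(k)}$, the block $M_{\mathscr{B}}(\mathbf{y})[J \times J]$ for $J \in I^{(\tau)}$ is a principal submatrix of the PSD matrix $M_{\mathscr{B}}(\mathbf{y})[I_i \times I_i]$, hence PSD. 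Reassembling, $B_{\mathscr{A}}^{(\tau)} \circ M_{\mathscr{B}}(\mathbf{y}) \succeq 0$, so $\mathbf{y}$ is feasible for $(P^\tau)$. Taking the infimum of $L_{\mathbf{y}}(f)$ over the larger feasible set of $(P^\tau)$ gives $\theta_\tau \le \theta_k$.

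There is essentially no serious obstacle here; the only point requiring a line of care is making the equivalence "PSD of a block-diagonal matrix $\iff$ PSD of each diagonal block" explicit, and confirming that $I^{(\tau)} \le I^{(k)}$ by construction — both of which follow directly from the definitions given in \cref{sec:refined}. One should also handle the trivial constraint $y_{\mathbf{0}} = 1$, which is identical in both problems and therefore transfers immediately. If one prefers to avoid the refinement language entirely, an alternative (and even shorter) route is to invoke the already-stated inclusion $\supp(B_{\mathscr{A}}^{(\tau)}) \subseteq \supp(B_{\mathscr{A}}^{(k)})$: for block-diagonal $0/1$ matrices this support inclusion is exactly the statement that the $(P^\tau)$ block structure is a refinement of the $(P^k)$ one, and the principal-submatrix argument goes through verbatim.
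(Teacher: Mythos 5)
Your proof is correct and follows essentially the same route as the paper: the paper's one-line argument that $\supp(B_{\mathscr{A}}^{(\tau)}) \subseteq \supp(B_{\mathscr{A}}^{(k)})$ together with block-diagonality makes $(P^{\tau})$ a relaxation of $(P^k)$ is exactly what you spell out via the refinement $I^{(\tau)} \leq I^{(k)}$ and the principal-submatrix PSD argument. Your version is just a more detailed writing-out of the same relaxation argument.
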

\begin{proof}
Since $\supp(B_{\mathscr{A}}^{(\tau)}) \subseteq \supp(B_{\mathscr{A}}^{(k)})$ and $B_{\mathscr{A}}^{(\tau)}$ is block-diagonal, $(P^{\tau})$ is a relaxation of $(P^k)$, therefore $\theta_{\tau} \leq \theta_k$.
\end{proof}

We now explain how to generate a partition $I^{(\tau)}$. For $k \geq 1$, let $\mathscr{S}^{(k-1)}$ be the set defined in \cref{subsec:tssos}. For all $\alpha \in \mathscr{S}^{(k-1)}$, let $A_{\alpha} \in \mathbb{S}^r \cap \mathbb{Z}_2^{r \times r}$ with $r = |\mathscr{B}|$ be a binary matrix indexed by $\mathscr{B}$ such that
\begin{equation}\label{eq:A}
	[A_{\alpha}]_{\beta \gamma} = \begin{cases}
		1, & \text{if } \beta + \gamma = \alpha, \\
		0, & \text{otherwise.}
	\end{cases}
\end{equation}
From \cref{eq:CA} we get $C_{\mathscr{A}}^{(k)} = \sum_{\alpha \in \mathscr{S}^{(k-1)}} A_{\alpha}$ and the amount of elements in $\supp(C_{\mathscr{A}}^{(k)})$ that correspond to the exponent $\alpha  \in \mathscr{S}^{(k-1)}$ is 
\begin{equation}\label{eq:suppA}
	|\supp(A_{\alpha})| = \sum_{\beta \in \mathscr{B}} \sum_{\gamma \in \mathscr{B}} [A_{\alpha}]_{\beta \gamma}.
\end{equation}
By construction $\supp(C_{\mathscr{A}}^{(k)}) \subseteq \supp(B_{\mathscr{A}}^{(k)})$, consequently, $\supp(A_{\alpha}) \subseteq \supp(B_{\mathscr{A}}^{(k)})$ for all $\alpha \in \mathscr{S}^{(k-1)}$. Since the computational cost of solving a block SDP problem $(P^{\tau})^*$ depends on the size of the largest diagonal block in $B_{\mathscr{A}}^{(\tau)}$, in order to minimize the overall runtime we obtain a refinement $I^{(\tau)}$ of $I^{(k)}$ by solving an IP problem that minimizes the width of $I^{(\tau)}$  and restricts $B_{\mathscr{A}}^{(\tau)}$ to satisfy the requirements:
\begin{enumerate}
	\item for all $\alpha \in \mathscr{S}^{(k-1)} \setminus (2\mathscr{B})$, $\supp(B_{\mathscr{A}}^{(\tau)})$ should contain at least $\varepsilon |\supp(A_{\alpha})|$ elements from $\supp(A_{\alpha})$ 
	\item for $\alpha \in 2\mathscr{B}$, $\supp(A_{\alpha}) \subseteq \supp(B_{\mathscr{A}}^{(\tau)})$
\end{enumerate}
Assigning a binary variable to each element of $C_{\mathscr{A}}^{(k)}$ defined in \cref{eq:CA} generally yield a large and, consequently, rather expensive IP problem. That is why we choose to work with a binary matrix $C^{(k)}$ defined in \cref{eq:C}, which normally results in a reduced size IP problem. Furthermore, if the IP problem corresponding to $C^{(k)}$ is too large, it can be replaced with a sequence of smaller IPs as explained in \cref{subsec:alg}. 

\begin{remark}\label{rem1}
	In the constrained case, for the relaxation order $\hat{d}$ and $k \geq 1$, let $I_{j,\hat{d}}^{(k)}$ be the partition of the monomial basis $\mathbb{N}_{\hat{d}-d_j}^n$ induced by a matrix $B_{j,\hat{d}}^{(k)}$, $j = 0, \ldots, m$ defined in \cref{sec:tssos_con}. For parameters $\tau_j = k - 1 + \varepsilon_j$, $j = 0, \ldots, m$ with $\varepsilon_j \in (0,1)$, we aim to find refinements $I_{j,\hat{d}}^{(\tau_j)}$ of partitions $I_{j,\hat{d}}^{(k)}$, where $\varepsilon_j$ controls the width of the partition  $I_{j,\hat{d}}^{(\tau_j)}$.
\end{remark}

\subsection{Algorithm for obtaining $I^{(\tau)}$}

\label{subsec:alg}

In this section, we formulate the algorithm that we use to define a partition $I^{(\tau)}$ of $\mathscr{B}$ with $\tau = k - 1 + \varepsilon$ for $k \geq 1$ and a given parameter value $\varepsilon \in (0,1)$. 

Let us first introduce some necessary notation. Let $P = \set{P_i | i \in [r_p]}$ be a partition of the set $(\mathscr{B})_2$ and $I = \set{I_i | i \in [r_p]}$ with $$I_i = \bigcup_{\delta \in P_i} \mathscr{B}_{\delta} = \bigcup_{\delta \in P_i} \set{\alpha \in \mathscr{B} | (\alpha)_2 = \delta}$$ be the corresponding partition of the monomial basis $\mathscr{B}$. For a partition $P$ and a binary matrix $M \in \mathbb{S}^{r_b} \cap \mathbb{Z}_2^{r_b \times r_b}$ with $r_b = |(\mathscr{B})_2|$ indexed by $(\mathscr{B})_2$, we define a binary matrix $D_M \in \mathbb{S}^{r_p} \cap \mathbb{Z}_2^{r_p \times r_p}$ by
\begin{equation}\label{eq:DM}
	[D_M]_{ij} = \begin{cases}
		1, & \text{if } 1 \in \{M_{\delta \sigma} : \delta \in P_i, \sigma \in P_j\}, \\
		0, & \text{otherwise.} 
	\end{cases}
\end{equation} 
For a vector $\nu \in (\mathscr{B} + \mathscr{B})_2$, we define a binary matrix $E_{\nu} \in \mathbb{S}^{r_b} \cap \mathbb{Z}_2^{r_b \times r_b}$ with $r_b = |(\mathscr{B})_2|$ indexed by $(\mathscr{B})_2$ as
\begin{equation}\label{eq:E}
	[E_{\nu}]_{\delta \sigma} = \begin{cases}
		1, & \text{if } \delta + \sigma = \nu, \\
		0, & \text{otherwise.}
	\end{cases}
\end{equation}

For $k \geq 1$ and a chosen parameter value $\varepsilon \in (0,1)$, a partition $I^{(\tau)}$ of $\mathscr{B}$ is defined using \cref{alg:main}.

\begin{algorithm}
	\caption{Defining a partition $I^{(\tau)}$ with $\tau = k - 1 + \varepsilon$}
	\label{alg:main}
	\begin{algorithmic}[1]
\STATE\label{line1}{Choose $k\geq 1$ and $\varepsilon \in (0,1)$}
\STATE{Define $C^{(k)}$ using \cref{eq:C}} 
\STATE{Set $W = C^{(k)}$} 
\STATE{Set $P = \{\left(\mathscr{B}\right)_2\}$, i.e., the trivial partition} 
\STATE\label{line5}{Set $\mathscr{S} = \mathscr{S}^{(k-1)}$ with $\mathscr{S}^{(k-1)}$ defined in \cref{subsec:tssos}} 
\FOR{$\nu \in \left(\mathscr{S}\right)_2 \setminus \{\mathbf{0}\}$}
\STATE{Define $E_{\nu}$ using \cref{eq:E}}
\STATE{Compute $M = W \circ (E_{\nu} + E_{\mathbf{0}})$}
\STATE{Compute $D_M$ based on $M$ and $P$ using \cref{eq:DM}}
\STATE\label{line10}{Update $P$ solving an IP problem for $\varepsilon$, $\nu$ and $D_M$ defined in \cref{subsec:assembling}}
\ENDFOR
\STATE{Set $I^{(\tau)}$ to the partition of $\mathscr{B}$ corresponding to the resulting partition $P$ of $(\mathscr{B})_2$}
\end{algorithmic}
 \end{algorithm}
Instead of iterating through $\nu \in \left(\mathscr{S}^{(k-1)}\right)_2 \setminus \{\mathbf{0}\}$ in \cref{alg:main} we could also formulate a single Integer Programming problem. However, this IP problem might be rather expensive to solve. Replacing it with a sequence of smaller IP problems normally results in a significant reduction of the computing time.

\begin{remark} \label{rem:chordal_refined}
	The new approach can also be used within the chordal-TSSOS. Let $B_{\mathscr{A}}^{(\tau)}$ be the block-diagonal matrix corresponding to the partition $I^{(\tau)}$ obtained using \cref{alg:main} and let $C_{\mathscr{A}}^{(k)}$, $k \geq 1$ be a binary matrix defined in \cref{eq:CA}. Maximal cliques of the graph obtained by applying a chordal-extension operation to the adjacency graph of $C_{\mathscr{A}}^{(k)} \circ B_{\mathscr{A}}^{(\tau)}$ might produce a cheaper SDP relaxation than the one generated by the $k$-th iterative step of the chordal-TSSOS. We further refer to the resulting method as the refined chordal-TSSOS.
\end{remark}

\begin{remark}
	In the constrained case, for the relaxation order $\hat{d}$ and parameter $\tau_j = k - 1 + \varepsilon_j$ with $k \geq 1$ and $\varepsilon_j \in (0,1)$, $j = 0, \ldots, m$  a partition $I_{j,\hat{d}}^{(\tau_j)}$ of $\mathscr{B}^{(j)} = \mathbb{N}^n_{\hat{d}-d_j}$ is obtained by applying \cref{alg:main} to $C^{(j,k)}$ defined in \cref{eq:CC} with  $\mathscr{S} = \mathscr{S}^{(k-1)}$ being replaced with $\mathscr{S}= \left(\mathscr{S}_{0,\hat{d}}^{(k-1)} - \supp(g_j)\right) \cap \left(\mathscr{B}^{(j)}+ \mathscr{B}^{(j)}\right)$, where $\mathscr{S}_{0,\hat{d}}^{(k-1)}$ is defined in \cref{subsec:tssos}.
\end{remark}

\subsection{Assembling an IP problem}

\label{subsec:assembling}

We now define an IP problem used in \cref{alg:main}. Let $\mathscr{S}$ be the set defined in \cref{line5} of \cref{alg:main} and let $\mathscr{S}_{\nu}$ be the subset of $\mathscr{S}$ containing elements with a parity type $\nu \in \left(\mathscr{S}\right)_2$, i.e., $\mathscr{S}_{\nu} := \set{\beta \in \mathscr{S} | (\beta)_2 = \nu}$. Let $P = \set{P_i | i \in [r_p]}$ be the current partition of the set $(\mathscr{B})_2$ with $I = \set{I_i | i \in [r_p]}$ being the corresponding partition of $\mathscr{B}$. Solving an Integer Programming problem for $\nu \in \left(\mathscr{S}\right)_2 \setminus \{\mathbf{0}\}$ we aim to find a new partition $P'$ of $\left(\mathscr{B}\right)_2$ with $P \leq P'$ such that the corresponding partition $I'$ of $\mathscr{B}$ satisfies the following condition further referred to as \textbf{C.1}: for all $\alpha \in \mathscr{S}_{\nu}$, the support of the block-diagonal matrix inducing $I'$ contains at least $\varepsilon |\supp(A_{\alpha})|$ elements from $\supp(A_{\alpha})$, where $\varepsilon$ is a parameter specified in \cref{line1} of \cref{alg:main} and the matrix $A_{\alpha}$ is defined in \cref{eq:A}. In order to approximate the goal of minimizing the overall computational cost of solving $(P^{\tau})$, we optimize over the set of all partitions $I'$ satisfying condition \textbf{C.1} with the objective function being the width of $I'$.

Let $Y$ be a matrix variable defined using the binary matrix $\overline{D_M} \in \mathbb{S}^{r_p} \cap \mathbb{Z}_2^{r_p \times r_p}$ by
\begin{equation} \label{eq:Y}
	Y_{ij} = \begin{cases}
		1, & \text{if } $i = j$, \\
		y_{ij} \in \{0,1\}, & \text{if } [\overline{D_M}]_{ij} = 1 \text{ and } i \neq j, \\
		0, & \text{otherwise}
	\end{cases}
\end{equation}
and let $G^d := \set{G_s^d | s \in [r_d]}$ be the set of connected components of the adjacency graph of $\overline{D_M}$. For $\alpha \in \mathscr{S}_{\nu}$, we define a matrix $K_{\alpha} \in \mathbb{S}^{r_p} \cap \mathbb{Z}^{r_p \times r_p}$ by
\begin{equation}\label{eq:K}
	[K_{\alpha}]_{ij} =  \sum_{\beta \in I_i} \sum_{\gamma \in I_j} [A_{\alpha}]_{\beta \gamma}.
\end{equation} 
Using this notation we can rewrite \cref{eq:suppA} as
\begin{equation}\label{eq:suppA2}
	|\supp(A_{\alpha})| = \sum_{\beta \in \mathscr{B}} \sum_{\gamma \in \mathscr{B}} [A_{\alpha}]_{\beta \gamma} = \sum_{i = 1}^{r_p} \sum_{j = 1}^{r_p}  [K_{\alpha}]_{ij}
\end{equation}
The new partition $P'$ is obtained by solving the following IP problem:
\begin{mini*}|l|
	{\omega \in \mathbb{Z},Y \text{ as in } \cref{eq:Y}}{\omega}
	{}{}
	\addConstraint{Y_{ik} + Y_{kj} - Y_{ij} \leq 1, \underset{\text{($i \neq j$, $j \neq k$, $k \neq i$)}}{\; \forall \; i,j,k \in G_s^d, \; s \in [r_d]} }
	\addConstraint{\sum_{k = 1}^{r_p} |I_k| Y_{ik} \leq \omega, \; i \in [r_p]}
	\addConstraint{\sum_{i = 1}^{r_p} \sum_{j = 1}^{r_p} [K_{\alpha}]_{ij} Y_{ij} \geq \varepsilon \sum_{i = 1}^{r_p} \sum_{j = 1}^{r_p} [K_{\alpha}]_{ij}, \; \forall \; \alpha \in \mathscr{S}_{\nu}},
\end{mini*}
where $\omega$ is an integer variable corresponding to the width of $I'$, the first set of constraints further refered to as (C1) restricts $Y$ to be block-diagonal, the second set of constraints (C2) guarantees that the width of $I'$ does not exceed $\omega$, and the third set of constraints (C3) is imposed to make $I'$ satisfy condition \textbf{C.1}.

 Let $G^y := \set{G_s^y | s \in [r_y]}$ be the set of connected components of the adjacency graph of $Y$. If $P = \{P_i \; | \; i \in [r_p]\}$ is the current partition of $\left(\mathscr{B}\right)_2$, then the new partition $P'$ is given by $P' = \set{P'_s | s \in [r_y]}$ with $P'_s = \cup_{i \in G_s^y} P_i$.

\subsection{Example: refined TSSOS}

\begin{example} \label{ex:ex3}
	Let us now apply \cref{alg:main} to the polynomial from \cref{ex:ex2}, for which we have $\left(\mathscr{S}^{(0)}\right)_2 = \set{(0,0,0),(1,0,0),(0,1,0),(1,0,1)}$ with
	\begin{itemize}
		\item $\mathscr{S}_{(1,0,0)}^{(0)} = \set{(1,2,2),(1,0,2)}$
		\item $\mathscr{S}_{(0,1,0)}^{(0)} = \set{(0,3,2)}$ 
		\item $\mathscr{S}_{(1,0,1)}^{(0)} = \set{(1,0,3)}$
	\end{itemize}
 Let us choose $k = 1$ and $\varepsilon = 0.2$. The partition $P$ of $\left(\mathscr{B}\right)_2$ is initially set to the trivial partition, i.e., $P = \set{\left(\mathscr{B}\right)_2}$. After solving IP problems for $\nu = (1,0,0)$ and $\nu = (0,1,0)$, we get the partition $P = \set{P_1, P_2, P_3, P_4, P_5}$ with
\begin{itemize}
	\item $P_1 = \set{(0,0,0)}$
	\item $P_2 = \set{(1,0,0)}$
	\item $P_3 = \set{(0,1,0)}$
	\item $P_4 = \set{(0,0,1),(1,0,1),(0,1,1),(1,1,1)}$
	\item $P_5 = \set{(1,1,0)}$
\end{itemize}

\vspace{5pt}

For $\nu = (1,0,1)$, using \cref{eq:E} and $M = C^{(1)} \circ (E_{101} + E_{000})$ we get
\begin{equation*}
	E_{101} = \begin{bmatrix}
		0 & 0 & 0 & 0 & 0 & 1 & 0 & 0 \\
		0 & 0 & 0 & 1 & 0 & 0 & 0 & 0 \\
		0 & 0 & 0 & 0 & 0 & 0 & 0 & 1 \\
		0 & 1 & 0 & 0 & 0 & 0 & 0 & 0 \\ 
		0 & 0 & 0 & 0 & 0 & 0 & 1 & 0 \\
		1 & 0 & 0 & 0 & 0 & 0 & 0 & 0 \\
		0 & 0 & 0 & 0 & 1 & 0 & 0 & 0 \\
		0 & 0 & 1 & 0 & 0 & 0 & 0 & 0  
	\end{bmatrix} 
 \quad 
M  =  \begin{bmatrix}
	1 & 0 & 0 &   0 & 0 & 1 &  0 &  0 \\
	0 & 1 & 0 & 1 & 0 & 0 & 0 & 0 \\
	0 & 0 & 1 & 0 & 0 & 0 & 0 & 0 \\
	0 & 1 & 0 &  1 & 0 & 0 & 0 & 0 \\ 
	0 & 0 & 0 & 0 & 1 & 0 & 0 & 0 \\
	1 & 0 & 0 & 0 & 0 & 1 & 0 & 0 \\
	0 & 0 & 0 & 0 & 0 & 0 & 1 & 0\\
	0 & 0 & 0 & 0 & 0 & 0 & 0 & 1
\end{bmatrix},
\end{equation*}
where the matrices $E_{101}$ and $M$ are indexed by $$(\mathscr{B})_2 = \{(0,0,0), (1,0,0), (0,1,0), (0,0,1), (1,1,0), (1,0,1), (0,1,1), (1,1,1)\}.$$

We now obtain the binary matrix $D_M \in \mathbb{S}^{r_p} \cap \mathbb{Z}_2^{r_p \times r_p}$ with $r_p = |P| = 5$ by applying \cref{eq:DM} to the matrix $M$ and the current partition $P$. We proceed in the following way: $[D_M]_{24} = 1$ since $1 \in \set{[M]_{\delta \sigma} | \delta \in P_2,  \sigma \in P_4}$ and $[D_M]_{34} = 0$ since $1 \notin \set{[M]_{\delta \sigma} | \delta \in P_3, \sigma \in P_4}$. Repeating this process for all $(i,j) \in [5]\times[5]$ we get
\begin{equation*}
	D_M = \begin{bmatrix}
		1 & 0 & 0 &  1 & 0 \\
		0 & 1 & 0 &  1 & 0 \\ 
		0 & 0 & 1 & 0 & 0 \\ 
		1 & 1 & 0 & 1 & 0 \\ 		
		0 & 0 & 0 & 0 & 1
	\end{bmatrix}  \quad \text{and}\quad
\overline{D_M} = \begin{bmatrix}
	1 & 1 & 0 & 1 & 0 \\
	1 & 1 & 0 & 1 & 0 \\ 
	0 & 0 & 1 & 0 & 0 \\ 
	1 & 1 & 0 & 1 & 0 \\ 		
	0 & 0 & 0 & 0 & 1
\end{bmatrix}
\end{equation*}
with the set of connected components of the adjacency graph of $\overline{D_M}$ being $G^d = \set{\set{1,2,4},\set{3},\set{5}}$. Using \cref{eq:Y} we defined the variable matrix
\begin{equation*}
	Y = \begin{bmatrix}
		1 & y_{12} & 0 & y_{14} & 0 \\
		y_{12} & 1 & 0 & y_{24} & 0 \\ 
		0 & 0 & 1 & 0 & 0 \\ 
		y_{14} & y_{24} & 0 & 1 & 0 \\ 		
		0 & 0 & 0 & 0 & 1
	\end{bmatrix},
\end{equation*}
producing to the following sets of (C1) and (C2) constraints:
\begin{align}  \label{eq:C12}
	-y_{12} + y_{14} + y_{24} & \leq 1   &   |I_1| \cdot 1 + |I_2| \cdot y_{12} + |I_4| \cdot y_{14} & \leq \omega \nonumber\\
	 y_{12} - y_{14} + y_{24} & \leq 1    &   |I_1| \cdot y_{12} + |I_2| \cdot 1 + |I_4| \cdot y_{24} & \leq \omega \\ 
	 y_{12} + y_{14} - y_{24} & \leq 1    &   |I_1| \cdot y_{14} + |I_2| \cdot y_{24} + |I_4| \cdot1 & \leq \omega  \nonumber
\end{align}  
with $|I_1| = 4$, $|I_2| = 4$, $|I_4| = 7$, where we use $|I_k| = \sum_{\delta \in P_k} |\mathscr{B}_{\delta}|$. Since
\begin{equation*}
	K_{103} = \begin{bmatrix}
		0 & 0 & 0 & 1 & 0 \\
		0 & 0 & 0 & 2 & 0 \\
		0 & 0 & 0 & 0 & 0 \\
		1 & 2 & 0 & 0 & 0 \\
		0 & 0 & 0 & 0 & 0
	\end{bmatrix}
\end{equation*} the corresponding (C3) constraint is
\begin{equation}  \label{eq:C3}
	y_{14} + 2y_{24} \geq \varepsilon \cdot 3.
\end{equation}

Minimizing $\omega \in \mathbb{Z}$ subject to constraints \cref{eq:C12} and \cref{eq:C3}, we get: $y_{12} = y_{24} = 0$, $y_{14} = 1$. This results in the partition $P = \set{P_1, P_2, P_3, P_4}$ with
\begin{itemize}
	\item $P_1 = \set{(0,0,0),(0,0,1),(1,0,1),(0,1,1),(1,1,1)}$,
	\item $P_2 = \set{(1,0,0)}$,
	\item $P_3 = \set{(0,1,0)}$,
	\item $P_4 = \set{(1,1,0)}$,
\end{itemize}
giving the following partition $I^{(0.2)}$ of the monomial basis $\mathscr{B}$:
\begin{enumerate} [label={\arabic*)}]
	\item $\set{1, x_1^2, x_2^2, x_3^2, x_3, x_3^3, x_1^2 x_3, x_2^2 x_3, x_1 x_3, x_2 x_3, x_1 x_2 x_3}$,
	\item $\set{x_1, x_1^3, x_1 x_2^2, x_1 x_3^3}$,
	\item $\set{x_2, x_2^3, x_1^2 x_2, x_2 x_3^2}$,
	\item $\set{x_1 x_2}$.
\end{enumerate}
Solving the corresponding SDP problem we obtain $\theta_{0.2} \approx -29.6934$.
\end{example}

\section{Numerical experiments}
\label{sec:num_res}
 In this section, we present numerical results for the proposed sparse moment-SOS relaxations constructed with $k = 1$ and different parameter values $\varepsilon \in (0,1)$ for both unconstrained and constrained polynomial optimization problems. Since $\tau = k - 1 + \varepsilon$, we have $\tau \in (0,1)$. Our algorithm, named rTSSOS (refined TSSOS) is implemented in Julia, utilizes JuMP \cite{JuMP} to model IP problems from \Cref{alg:main} and relies on MOSEK \cite{Mosek} to solve them. The corresponding SDP problems are assembled and solved using functions from the TSSOS tool \cite{TSSOStool}. In the following subsections, we compare the performance of rTSSOS with that of the block and chordal-TSSOS methods. The block TSSOS method is described in \cref{subsec:tssos} and the chordal-TSSOS is obtained by replacing the block-closure operation from \cref{eq:block_closure} with a chordal-extension operation on the adjacency graph. The numerical results for the block and chordal-TSSOS were obtained using the TSSOS tool.

All numerical examples were computed on a server with the Linux system. The timing of rTSSOS includes the time for pre-processing (to get all the necessary data for IP and SDP problems) and the time for assembling and solving IPs and SDP. The  runtime for all methods was obtained using the \verb|@elapsed| function. In our computations we set the CPU time limit for the SDP solver to 5000 seconds. If for the relaxation produced by some method the SDP solver terminates with the  status different from \verb|OPTIMAL|,  the problem is considered to be unsolved by this method. Note that the time the solver actually spent on the problem was in several cases significantly larger than the imposed upper limit of 5000s due to solver-specific reasons. In our statistics, we used the values that were reported by the \verb|@elapsed| function. The notations used in this section are listed in \cref{tab:notation}.

\begin{table}[tbhp]
	\footnotesize
	\captionsetup{position=top} 
	\caption{Notation}\label{tab:notation} 
	\begin{center}
		\begin{tabular}{|c|c|l|} \hline
			\multicolumn{2}{|c|}{$n$}  & the number of variables \\  \hline
			\multicolumn{2}{|c|}{$2d$} & the degree of a polynomial \\  \hline
			\multicolumn{2}{|c|}{$s$} & the length of a support   \\  \hline
			\multicolumn{2}{|c|}{$k$} & the iterative step of the TSSOS (chordal or block)\\ \hline
			\multicolumn{2}{|c|}{$\hat{d}$} & the ralaxation order of Lasserre hierarchy \\  \hline
			\multicolumn{2}{|c|}{$mb$} & the maximal size of blocks \\ \hline
			\multicolumn{2}{|c|}{$N_u$} & the amount of unsolved problems \\ \hline
			\multicolumn{2}{|c|}{$\Theta_M$} & the optimal value  obtained with method $M$ \\ \hline
			\multicolumn{2}{|c|}{$\Theta_B$} & the best optimal value over all methods \\ \hline
			\multicolumn{2}{|c|}{$T_M$} & the computing time  for method $M$\\  \hline
			\multirow{4}{*}{$M$:} & $\tau$ & the rTSSOS for the parameter value $\tau$  \\ \cline{2-3}
			& $t1$ & the block TSSOS ($k = 1$) \\ \cline{2-3}
			& $c1$ & the chordal-TSSOS ($k = 1$) \\ \cline{2-3}
			& $c2$ & the chordal-TSSOS  ($k = 2$)\\ \cline{2-3}	\hline
		\end{tabular}
	\end{center} 
\end{table}

\subsection{Unconstrained polynomial optimization problems} 

\label{subsec:res_uncon}

\begin{example} \label{ex:illustrative}
	Let us start with an illustrative example and consider the following polynomials:
	$$f_1 = \sum_{i = 1}^{8} \frac{1}{i} x_{i}^{8} + \sum_{i = 1}^{4} (-1)^{i} \frac{1}{2i} x_i^4 x_{i+2} x_{i+4} + \sum_{i = 1}^{5} (-1)^{i} \frac{1}{3i} x_i x_{i+1} x_{i+2}^2 x_{i+3},$$ 
	$$f_2 = \sum_{i = 1}^{8} \frac{1}{i} x_{i}^{8} + \sum_{i = 1}^{4} (-1)^{i} \frac{1}{2i} x_i^2 x_{i+2} x_{i+4} + \sum_{i = 1}^{5} (-1)^{i} \frac{1}{3i} x_i x_{i+1}^2 x_{i+2}^2 x_{i+3},$$ 
	$$f_3 = \sum_{i = 1}^{8} \frac{1}{i} x_{i}^{8} + \sum_{i = 1}^{4} (-1)^{i} \frac{1}{2i} x_i^2 x_{i+2} + \sum_{i = 1}^{5} (-1)^{i} \frac{1}{3i} x_i x_{i+1} x_{i+2}^2.$$
	The polynomials $f_1$, $f_2$ and $f_3$ have 8 variables and are of degree 8. The monomial basis is $\mathbf{x}^{\mathbb{N}_4^8}$. The numerical results on these polynomials listed in \cref{tab:illustrative} demonstrate a potential speed-up of the refined TSSOS compared to the block TSSOS. \cref{tab:illustrative} also shows that for a polynomial $f \in \mathbb{R}[\mathbf{x}]$ and a fixed parameter value $\tau$, the ratio of the maximal size of blocks in the relaxation generated by the refined TSSOS to the maximal size of blocks in the relaxation given by the first iterative step of the block TSSOS method ($mb_{\tau}/mb_{t1}$) strongly depends on the support of $f$. 
	
\begin{table}[tbhp]
	\footnotesize
	\captionsetup{position=top} 
	\caption{Numerical results for the polynomials from \cref{ex:illustrative} demonstrate a potential speed-up of the refined TSSOS compared to the block TSSOS and dependence of $mb_{\tau}/mb_{t1}$ on the support of $f$.}\label{tab:illustrative} 
	\begin{center}
		\begin{tabular}{cc|c|c|c|c|c|c|c|c|} \cline{3-10}
			&  & \multirow{2}{*}{block TSSOS} & \multicolumn{7}{c|}{refined TSSOS} \\ \cline{4-10}
			& & & 0.1 & 0.2 & 0.3 & 0.4 & 0.5 & 0.6 & 0.7 \\ \hline
			\multicolumn{1}{ |c  }{\multirow{3}{*}{$f_1$ }} & \multicolumn{1}{ |c| }{mb} & 340 & 45 & 45 & 45 & 45 & 45 & 95 & 197 \\ \cline{2-10}
			\multicolumn{1}{ |c  }{}  &  \multicolumn{1}{ |c| }{time}  & 49.874 & 1.222 & 1.063 & 1.138  & 1.048 & 1.051 & 2.421 & 20.897 \\  \cline{2-10}
			\multicolumn{1}{ |c  }{}  &  \multicolumn{1}{ |c| }{opt}  & -0.132 & -0.132 & -0.132 & -0.132 & -0.132 & -0.132 & -0.132 & -0.132 \\ \hline
			\multicolumn{1}{ |c  }{\multirow{3}{*}{$f_2$ }} & \multicolumn{1}{ |c| }{mb} & 248 & 45 & 46 & 46 & 63 & 63 & 65 & 185 \\ \cline{2-10}
			\multicolumn{1}{ |c  }{}   &  \multicolumn{1}{ |c| }{time}  & 24.699 & 0.793 & 0.931 & 0.881 & 1.306 & 1.270 & 1.415 & 14.263 \\ \cline{2-10}
			\multicolumn{1}{ |c  }{}  &  \multicolumn{1}{ |c| }{opt}  & -0.220 & -0.246 & -0.246 & -0.246 & -0.220 & -0.220 & -0.220 & -0.220 \\ \hline
			\multicolumn{1}{ |c  }{\multirow{3}{*}{$f_3$ }} & \multicolumn{1}{ |c| }{mb} & 184 & 54 & 54 & 54 & 100 & 100 & 102 & 184 \\ \cline{2-10}
			\multicolumn{1}{ |c  }{}   &  \multicolumn{1}{ |c| }{time}  & 10.425 & 1.033 & 0.944 & 0.979 & 2.629 & 2.446 & 2.758 & 9.763 \\ \cline{2-10}
			\multicolumn{1}{ |c  }{}  &  \multicolumn{1}{ |c| }{opt}  & -0.385 & -0.464 & -0.464 & -0.464 & -0.385 & -0.385 & -0.385 &-0.385 \\ \hline
		\end{tabular}
	\end{center} 
\end{table}	
\end{example}

We now test our approach on three sets of random polynomials. Polynomials from the first set (set I) are defined by
\begin{equation*}
	f = c_0 + \sum_{i = 1}^{n} c_i x_i^{2d} + \sum_{j = 1}^{s - n - 1} c_j' \mathbf{x}^{\mathbf{\alpha}_j} \in \text{\textbf{randpolyI}}(n,2d,s)
\end{equation*}
and constructed as follows: we randomly choose coefficients $c_i$ between $0$ and $1$ as well as $s-n-1$ vectors $\mathbf{\alpha}_j$ in $\mathbb{N}_{2d-1}^{n} \setminus \{0\}$ with random coefficients $c_j'$ between $-1$ and $1$. The second set (set II) consists of polynomials $f \in \text{\textbf{randpolyI}}(n,2d,s)$ that satisfy an additional requirement, namely, we restrict all vectors $\alpha_j$, $j \in \set{1, \ldots, s-n-1}$ to have at least 5 non-zero components. Polynomials $f \in \text{\textbf{randpolyI}}(n,2d,s)$ from the third set (set III) have $\alpha_j$, $j \in \set{1, \ldots, s-n-1}$ with at most 3 non-zero components. To form each of these sets we use polynomials from 10 different classes:
\begin{center}
	\begin{tabular}{r|c|c|c|c|c|c|c|c|c|c|} 
		\multicolumn{1}{r}{} & \multicolumn{1}{c}{1} & \multicolumn{1}{c}{2} & \multicolumn{1}{c}{3} & \multicolumn{1}{c}{4} & \multicolumn{1}{c}{5} & \multicolumn{1}{c}{6} & \multicolumn{1}{c}{7} &
		\multicolumn{1}{c}{8} & \multicolumn{1}{c}{9} & \multicolumn{1}{c}{10} \\ \cline{2-11}		
		$n$ & 8 & 8 & 8 & 9 & 9 & 9 & 10 & 10 & 11 & 11 \\
		$2d$ & 8 & 8 & 8 & 8 & 8 & 8 & 8 & 8 & 8 & 8 \\
		$s$ & 17 & 19 & 21 & 19 & 21 & 23 & 21 & 23 & 23 & 25 \\\cline{2-11}	
	\end{tabular}
\end{center} \vspace{5pt}
For each of these classes we generate 100 random polynomials. The information about the IP problems solved to obtain rTSSOS relaxations is presented in \cref{tab:ip_sets}. Numerical results for the refined, block and chordal-TSSOS  on polynomials from sets I, II and III are presented in rows 1, 2 and 3 of \cref{fig:uncon} as well as in \cref{tab:num1,tab:num5,tab:num7} in \cref{sec:appendix}. Comparing the runtime from \cref{tab:ip_sets,tab:num1,tab:num5,tab:num7} one can see that solving of IP problems takes only a small fraction of the overall computational cost. The distribution of the maximal size of blocks for the refined and block TSSOS on polynomials from set I is depicted in \cref{fig:block_size_refined}.

 \begin{figure}[tbhp]
 	\centering 
 	\includegraphics[trim = 110 50 150 50, clip, width=1.0\linewidth]{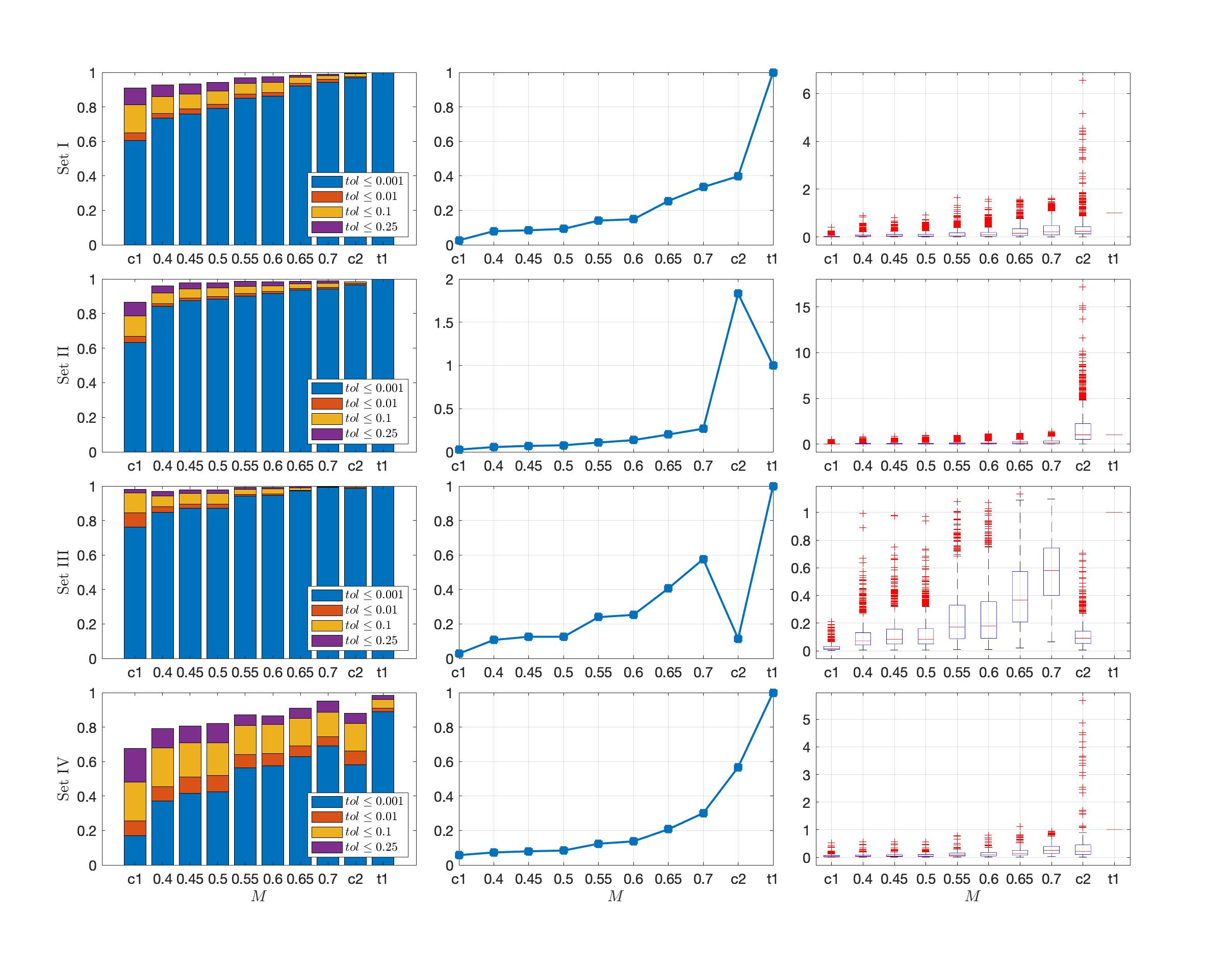}
 	\vspace{-20pt}
 	\caption{Numerical results for the chordal ($c1$ and $c2$), block ($t1$) and refined TSSOS with different parameter values $\tau \in (0, 1)$ in the unconstrained case: plots in column 1 show the fraction of problems solved to a certain accuracy by method $M$, i.e., with $\frac{|\Theta_{M} - \Theta_B|}{|\Theta_B|} \leq tol$, the ratio of CPU time for method $M$ to CPU time of TSSOS ($k=1$), i.e., $T_{M} / T_{t1}$, averaged over all problems in the set is depicted in column 2, box plots showing the spread of values $T_{M} / T_{t1}$ are given in column 3, where the central mark indicates the median, and the bottom and top edges of the box indicate the 25th and 75th percentiles, respectively. Note that the CPU time limit for the SDP solver is 5000 seconds. If for some problem the solver fails to terminate within this time on a relaxation produced by method $M$, this problem is considered to be unsolved by this method.}
 	\label{fig:uncon}
 \end{figure}

 \begin{figure}[tbhp]
	\centering 
	\includegraphics[trim = 115 20 115 50, clip, width=0.8\linewidth]{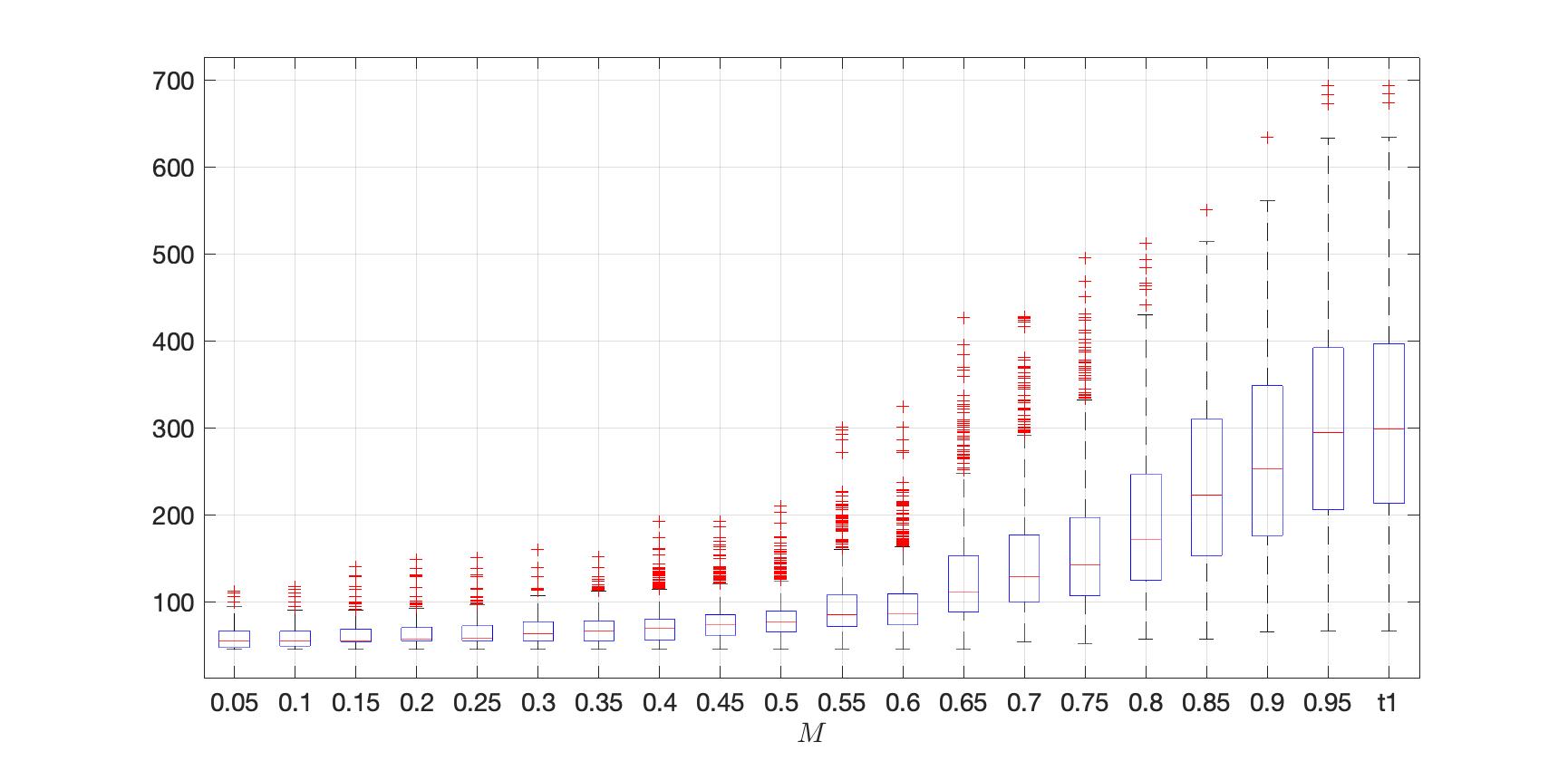}
	\caption{The distribution of the maximal size of blocks for the block ($t1$) and refined TSSOS with different parameter values $\tau \in (0, 1)$ on polynomials from set I: the central mark indicates the median, and the bottom and top edges of the box indicate the 25th and 75th percentiles, respectively. The whiskers extend to the most extreme data points not considered outliers, and the outliers are plotted individually using the '+' symbol.}
	\label{fig:block_size_refined}
\end{figure}
 
 The obtained numerical results show that on polynomials from sets I, II and III the refined TSSOS method allows to obtain cheaper relaxations than the ones returned by the first iterative step of the block TSSOS, rTSSOS with a well-chosen parameter value $\tau \in (0,1)$ also provides more accurate bounds than the first step of the chordal-TSSOS with only moderate increase in the computing time. It is still an open question how to choose an optimal parameter value $\tau$ that would allow to obtain a good quality bound in a reasonable time. On set II, the refined TSSOS performs much better than the second iterative step of the chordal-TSSOS, which becomes very expensive on this kind of polynomials. On the other hand, on set III the second iterative step of the chordal-TSSOS is significantly faster than rTSSOS with large $\tau$ and more accurate than rTSSOS with small $\tau$.

Newton polytopes of polynomials from sets I, II and III are scaled standard simplices. To test our approach on polynomials with more general Newton polytopes we use polynomials $ f \in \text{\textbf{randpolyII}}(n,2d,k_1,k_2,k_3,k_4)$ generated as follows:
\begin{enumerate}
	\item Randomly generate a partition $\set{A_1, A_2, A_3}$ of the set $\set{1, \ldots, n}$ with $|A_2| + |A_3| = k_1$.
	\item Randomly pick vectors $\gamma_i$, $i \in \set{1, \ldots, k_2}$ from $2 \mathbb{N}_{(d+2)}^n \setminus 2\mathbb{N}_{d}^n$.
	\item Define a polynomial $g = \sum_{i \in A_1} c_i x_i^{2d} + \sum_{i \in A_2} c_i x_i^{2(d+1)} + \sum_{i \in A_3} c_i x_i^{2(d+2)} + \sum_{j = 1}^{k_2} c_i^e \mathbf{x}^{\gamma_i}$ with random coefficients $c_i$ and $c_i^e$ between 0 and 1.
	\item Set $d_g = \deg(g)/2$ and $\mathscr{B}_g = \frac{1}{2} \cdot \New(g) \cap \mathbb{N}^n$.
	\item Define $f = g + \sum_{j = 1}^{k_3} c_j' \mathbf{x}^{\alpha_j} + \sum_{j = 1}^{k_4} c_j^{o} \mathbf{x}^{\beta_j}$, where we randomly choose $k_3$ vectors $\alpha_j$ in $\mathbb{N}_{2d}^n$ with coefficients $c_j'$ between -1 and 1 as well as $k_4$ vectors $\beta_j$ in $\left(\mathscr{B}_g + \mathscr{B}_g \right) \setminus \left(\mathbb{N}_{2d}^n \cup 2\mathscr{B}_g \right)$ with $c_j^o$ between 0 and 1 if $|\beta_j| = 2 d_g$ and between -1 and 1 if  $|\beta_j| < 2 d_g$.
\end{enumerate}

We consider polynomials from two classes: $$(n,2d,k_1,k_2,k_3,k_4) \in \{(8,8,2,4,8,4), (8,8,4,6,8,4)\}.$$ 
For each of these classes we generate 100 random polynomials. We refer to these polynomials as set IV.
 Numerical results on this set are  displayed in row 4 of \cref{fig:uncon} and listed in \cref{tab:num3} in \cref{sec:appendix}. Note that the time for computing a monomial basis is included in the time of the first iterative step of the block and chordal-TSSOS as well as the refined TSSOS for all $\tau$. 

Finally, we generate 20 random polynomials $H_1, \ldots, H_{10} \in \text{\textbf{randpolyI}}(10,4,30)$, $H_{11}, \ldots, H_{20} \in \text{\textbf{randpolyI}}(10,4,35)$, such that all vectors $\alpha_j$, $j \in \set{1, \ldots, s-n-1}$ have at least 6 non-zero components. Numerical results on polynomials $H_1, \ldots, H_{20}$  listed in \cref{num10} demonstrate that in comparison with the first iterative step of the chordal-TSSOS method application of the refined TSSOS method to this kind of polynomials either results in cheaper relaxations (possibly sacrificing the accuracy) or in better quality bounds (possibly increasing the computational costs). 

\begin{table}[tbhp]
	\footnotesize
	\captionsetup{position=top} 
	\caption{Numerical results on polynomials $H_i$, $i = 1, \ldots, 20$: for these polynomials the refined TSSOS with different parameter values $\tau$ results either in a computational speed-up or in better quality bounds compared to the chordal-TSSOS ($k = 1$)  }\label{num10} 
	\begin{center}
		\begin{tabular}{|c|c|c|c|c|c|c|c|c|}  \hline
			& \multicolumn{2}{|c|}{chordal-TSSOS } & \multicolumn{6}{|c|}{ refined TSSOS} \\  \cline{4-9}
			&\multicolumn{2}{|c|}{$k = 1$} &  \multicolumn{2}{|c|}{$\tau = 0.25$} &  \multicolumn{2}{|c|}{$\tau = 0.3$}  &  \multicolumn{2}{|c|}{$\tau = 0.35$} \\ \cline{2-9}
			& $Opt$ & $Time$ & $Opt$ & $Time$ & $Opt$ & $Time$ & $Opt$ & $Time$   \\ \hline
			$H_1$ & -1634.37 & 32.47 & -1720.11  & 3.58 & -1634.36 & 4.31 & -1634.37 & 9.81 \\
			$H_2$ & -4636.21 & 32.16 & -5568.03 & 3.46 & -4636.19 & 7.24 & -4636.21 & 32.83 \\
			$H_3$ & -42.32 & 41.57 & -132.25 & 3.78 & -120.84 & 4.55 & -62.49 & 7.77 \\
		    $H_4$ & -1810.46 & 27.57 & -2728.98 & 6.42 & -2236.28 & 4.55 & -1809.28 & 16.06 \\
		    $H_5$ & -291.54 & 22.62 & -508.79 & 3.39 & -330.32 & 4.95 & -291.54 & 9.07 \\
		    $H_6$ & -771.15 & 43.51 & -2471.77 & 4.04 & -1458.63 & 8.18 & -909.953 & 13.29 \\
		    $H_7$ & -23.81 & 35.34 & -59.67 &  3.15 & -43.96 & 4.13 & -31.63 &  6.04 \\
	    	$H_8$ & -42.82 & 44.17 & -221.83 & 3.38 & -184.75 & 4.12 & -49.26 & 17.04  \\
		    $H_9$ & -459.17 & 37.14 & -1134.22 & 3.25 & -881.56 & 5.16 & -493.75 & 10.87 \\
		    $H_{10}$ & -16.72 & 34.95 & -43.15 & 3.43 & -35.38 & 5.06 & -24.13 & 7.83 \\
		    $H_{11}$ & -14.78 & 40.41 & -45.79 & 13.74 & -20.29 & 27.12 & -10.99 & 306.13  \\
		    $H_{12}$ & -25.34 & 55.69 & -123.32 & 5.94 & -35.38 & 17.46 & -10.91  & 126.79 \\
		    $H_{13}$ & -295.29 & 59.77 & -1849.01 & 5.40 & -657.48 & 11.19 &-197.71 & 377.17 \\
		    $H_{14}$ & -48.99 & 62.93 & -212.21 & 5.34 & -124.71 & 9.93 & -26.64 & 51.71 \\
		    $H_{15}$ & -2666.89 & 48.12 & -5364.11 & 10.86 & -2818.64 & 28.59 & -2666.69 & 198.92 \\
		    $H_{16}$ & -362.99 & 42.74 & -1077.34 & 9.39 & -432.41 & 28.26 & -242.24 & 880.44 \\
		    $H_{17}$ & -1168.15 & 62.83 & -3254.47 & 5.50 & -2397.9  & 10.59 & -1056.01 & 390.77 \\
		    $H_{18}$ & -40.94 & 47.47 & -220.09 & 6.73 & -82.47 & 12.86 & -35.35 & 112.77 \\
		    $H_{19}$ & -100.11 & 60.13 & -380.15 & 4.17 & -124.99  & 14.98 & -78.97 & 228.44 \\
		    $H_{20}$ & -443.15 & 47.68 & -3433.21 & 4.47 & -1049.09  & 10.58 & -469.66 & 26.06 \\ \hline		
		\end{tabular}
	\end{center} 
\end{table}

\subsection{Constrained polynomial optimization problems} 

\label{subsec:res_con}

Now we present the numerical results for constrained polynomial optimization problems. First we minimize polynomials from set I used in \cref{subsec:res_uncon} over a basic semialgebraic set  $\mathbf{K} := \{(x_1, \ldots, x_n) \in \mathbb{R}^n \; | \; g_1 = 25 - (x_1^2 + \cdots + x_n^2) \geq 0\}$. Numerical results for the relaxation order $\hat{d} = 4$ with $\tau_0 = \tau_1$ are given in \cref{fig:con} and \cref{tab:num4} in \cref{sec:appendix}. These results give a qualitative similar picture to \cref{fig:uncon}. The computational gains for the TSSOS method are even more impressive, highlighting the potential of the rTSSOS method for constrained problems. We now focus on the question how the hyperparameters $\tau_0$ and $\tau_1$ should be chosen. The numerical results for the relaxation order $\hat{d} = 4$ with $(\tau_0, \tau_1) \in \set{(0.1 i, 0.1 j): i,j \in [9]}$ on polynomials from set I with $(n,s) = (8,21)$ are presented in \cref{fig:con_mesh}. According to these results it makes more sense to choose $\tau_0 \geq \tau_1$, since this yields relaxations returning tight bounds with larger computational savings compared to the case when $\tau_0 < \tau_1$.

\begin{figure}[tbhp]
	\centering
	\subfloat{\includegraphics[trim = 40 0 70 50, clip, width=0.5\linewidth]{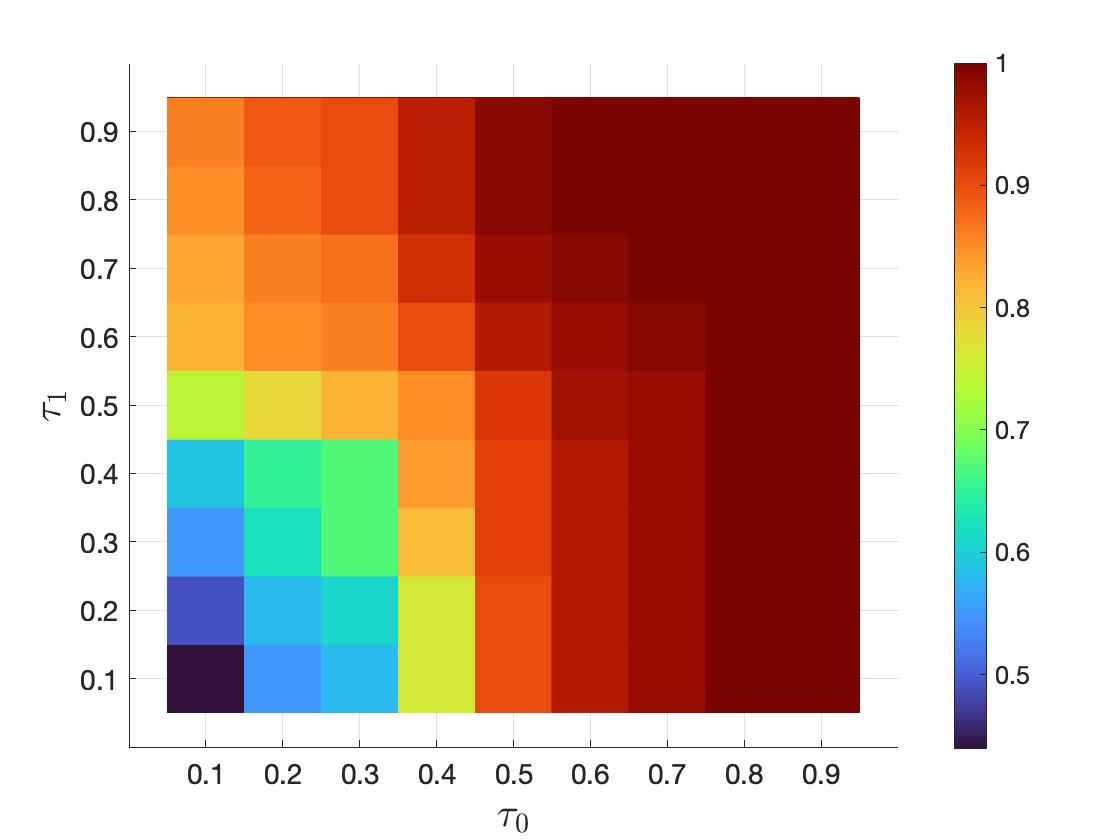}}
	\subfloat{\includegraphics[trim = 40 0 70 50, clip, width=0.5\linewidth]{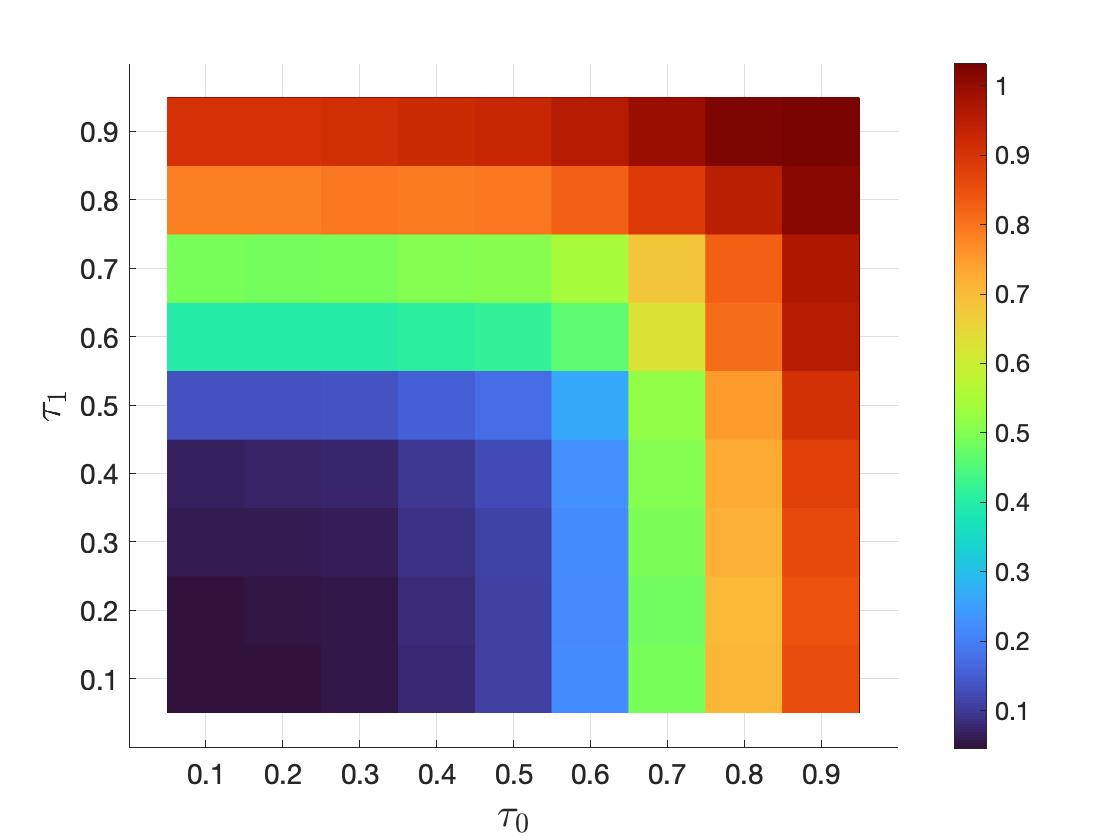}}
	\caption{Numerical results for the refined TSSOS on polynomials from set I with $(n,s) = (8,21)$, $\mathbf{K} := \{(x_1, \ldots, x_n) \in \mathbb{R}^n \; | \; g_1 = 25 - (x_1^2 + \cdots + x_n^2) \geq 0\}$ and the relaxation order $\hat{d} = 4$: the plot on the left shows the fraction of 100 problems solved to the accuracy $tol = 0.001$, i.e., with $\frac{|\Theta_{(\tau_0,\tau_1)} - \Theta_{t1}|}{|\Theta_{t1}|} \leq tol$, the plot on the right depicts the ratio of CPU time for the refined TSSOS to CPU time of TSSOS ($k=1$), i.e., $T_{(\tau_0,\tau_1)} / T_{t1}$, averaged over 100 problems, where $(\tau_0, \tau_1) \in \set{(0.1 i, 0.1 j): i,j \in [9]}$.}
	\label{fig:con_mesh}
\end{figure}

We now test the refined chordal-TSSOS method, i.e., the refined TSSOS approach implemented within the chordal-TSSOS method. For this we consider randomly generated polynomials defined by
\begin{equation*}
	f = \sum_{j = 1}^{s} c_j \mathbf{x}^{\mathbf{\alpha}_j} \in \text{\textbf{randpolyIII}}(n,2d,s)
\end{equation*}
constructed as follows: we randomly choose $s$ vectors $\mathbf{\alpha}_j$ in $\mathbb{N}_{2d}^{n}$ with random coefficients $c_j$ between $-1$ and $1$ and make sure that an obtained support contains at least one exponent $\alpha$ with $|\alpha| = 2d$. We generate 18 random polynomials $F_1, \ldots, F_{18}$:
\begin{align*}
	F_1,F_2,F_3 &\in \text{\textbf{randpolyIII}}(8,8,30) \\
	F_4,F_5,F_6 &\in \text{\textbf{randpolyIII}}(8,8,35) \\
	F_7,F_8,F_9 &\in \text{\textbf{randpolyIII}}(8,8,40) \\
	F_{10},F_{11},F_{12} &\in \text{\textbf{randpolyIII}}(10,8,30) \\
	F_{13},F_{14},F_{15} &\in \text{\textbf{randpolyIII}}(10,8,35) \\
	F_{16},F_{17},F_{18} &\in \text{\textbf{randpolyIII}}(10,8,40)
\end{align*}

The numerical results on polynomials $F_1, \ldots, F_{18}$ for the relaxation order $\hat{d} = 4$ and $\mathbf{K} := \{(x_1, \ldots, x_n) \in \mathbb{R}^n \; | \; g_1 = 9 - (x_1^2 + \cdots + x_n^2) \geq 0\}$ are listed in \cref{num9}, where the column $mc$ contains the maximal size of maximal cliques in the moment matrix $M_{\hat{d}}(\mathbf{y})$ and the localizing matrix $M_{\hat{d}-d_1}(g_1\mathbf{y})$, the obtained bound and the computing time are given in the columns $Opt$ and $Time$, respectively. On these polynomials in comparison with the first iterative step of the chordal-TSSOS the refined version of this method allows to reduce the computing time without a large loss in the quality of bounds.

\begin{table}[tbhp]
	\footnotesize
	\captionsetup{position=top} 
	\caption{Numerical results on polynomials $F_i$, $i = 1, \ldots, 18$ for the constrained case with $\mathbf{K} := \{(x_1, \ldots, x_n) \in \mathbb{R}^n \; | \; g_1 = 9 - (x_1^2 + \cdots + x_n^2) \geq 0\}$ and the relaxation order $\hat{d} = 4$: on these polynomials the refined chordal-TSSOS allows to reduce the computing time without a large loss in the quality of bounds compared to the chordal-TSSOS ($k = 1$)}\label{num9} 
	\begin{center}
		\begin{tabular}{|c|c|c|c|c|c|c|c|c|c|}  \hline
			 & \multicolumn{3}{|c|}{\multirow{2}{*}{chordal-TSSOS} } & \multicolumn{6}{|c|}{refined chordal-TSSOS} \\  \cline{5-10}
			 & \multicolumn{3}{|c|}{}&  \multicolumn{3}{|c|}{$\tau = (0.1,0.1)$} &  \multicolumn{3}{|c|}{$\tau = (0.3,0.3)$} \\ \cline{2-10}
			 & $mc$ & $Opt$ & $Time$ & $mc$ & $Opt$ & $Time$ & $mc$ & $Opt$ & $Time$   \\ \hline
			 $F_1$ & (54,41) & -120.14 & 25.73 & (45,10) & -120.19 & 3.21 & (45,15) & -120.80 & 5.23 \\
			 $F_2$ & (57,38) & -344.14 & 25.98 & (45,9) & -344.15 & 2.95 & (45,17) & -344.14 & 6.10 \\
			 $F_3$ & (60,42) & -103.49 & 28.06 & (45,9) & -103.70 & 3.42 & (45,14) & -103.54 & 4.40 \\
			 $F_4$ & (74,44) & -1360.37 & 32.10 & (45,13) & -1360.37 & 4.43 & (70,47) & -1360.37 &24.60 \\
			 $F_5$ & (69,41) & -328.72 & 30.55 & (45,10) & -328.72 & 3.60 & (45,31) & -328.72 & 11.32 \\
			 $F_6$ & (76,46) & -137.16 & 34.24 & (45,21) & -137.16 & 7.52 & (45,42) & -137.16 & 22.37 \\
			 $F_7$ & (70,47) & -579.07 & 27.83 & (45,14) & -579.19 & 4.31 & (45,28) & -579.07 & 8.69 \\
			 $F_8$ & (72,44) & -422.31 & 45.24 & (45,23) & -422.44 & 8.54 & (45,26) & -422.31 & 10.71 \\
			 $F_9$ & (98,48) & -444.38 & 51.91 & (45,20) & -444.38 & 5.60 & (45,46) & -444.38 & 39.65 \\
			 $F_{10}$ & (66,42) & -302.29 & 50.12 & (66,11) & -302.29 & 5.46 & (66,11) & -302.29 & 5.74 \\
			 $F_{11}$ & (66,28) & -106.61 & 20.48 & (66,11) & -106.61 & 4.63 & (66,11) & -106.61 &  5.18 \\
			 $F_{12}$ & (66,39) & -97.77 & 36.22 & (66,11) & -158.01 & 4.24 & (66,11) & -97.77 & 4.57 \\
			 $F_{13}$ & (74,48) & -103.82 & 141.96 & (66,11) & -105.65 & 5.29 & (66,11) &-105.65 & 6.96 \\
			 $F_{14}$ & (66,45) & -196.48 & 53.68 & (66,11)  & -196.54 & 4.92 & (66,11)  & -196.48 & 5.36 \\
			 $F_{15}$ & (66,47) & -38.88 & 89.59 & (66,11) & -38.97 & 5.01 & (66,11) & -38.91 & 6.58 \\
			 $F_{16}$ & (89,57) & -238.29 & 147.44 & (66,16) & -238.29 & 8.37 & (66,35) & -238.29 & 31.12 \\
			 $F_{17}$ & (84,55) & -166.62 & 148.59 & (66,11) & -166.80 & 5.64 & (66,12) & -166.80 & 6.99 \\
			 $F_{18}$ & (66,43) & -116.91 & 56.61 & (66,24) & -116.91 & 10.92 & (66,31) & -116.91 & 21.22 \\ \hline
		\end{tabular}
	\end{center} 
\end{table}

\section{Conclusion and Outlook}

\label{sec:conclusion}

We have provided a new approach that refines TSSOS iterations using combinatorial optimization and results in block-diagonal matrices with reduced maximum block sizes. Numerical results on a benchmark library show the large potential for computational speedup for unconstrained and constrained polynomial optimization, while obtaining almost identical bounds in comparison to established methods.

One direction of further research is to investigate other strategies for generating a partition $I^{(\tau)}$ with $\tau = k - 1 + \varepsilon$, $\varepsilon \in (0,1)$. For instance, $I^{(\tau)}$ can also be obtained by solving an IP problem that minimizes the width of $I^{(\tau)}$  and restricts the corresponding binary matrix $B_{\mathscr{A}}^{(\tau)}$ to satisfy the requirements:
\begin{enumerate}
	\item for all vectors $\nu \in \left(\mathscr{S}^{(k-1)}\right)_2 \setminus \set{\mathbf{0}}$, $\supp(B_{\mathscr{A}}^{(\tau)})$ should contain at least $\varepsilon \sum_{(\delta,\sigma) \in J_{\nu}} |\mathscr{B}_{\delta}| \cdot |\mathscr{B}_{\sigma}|$ elements from $$\set{(\beta,\gamma) \in \supp(B_{\mathscr{A}}^{(k)}) | (\beta + \gamma)_2 = \nu},$$ where $J_{\nu} := \set{(\delta,\sigma) \in \supp(\overline{C^{(k)}}) | \delta + \sigma = \nu}$ with $C^{(k)}$ defined in \cref{eq:C},
	\item $\set{(\beta,\gamma) \in \supp(B_{\mathscr{A}}^{(k)}) : (\beta + \gamma)_2 = \mathbf{0} } \subseteq \supp(B_{\mathscr{A}}^{(\tau)})$
\end{enumerate}
Utilization of this or some other alternative strategy might potentially improve the approach. Another question left for further investigation is how to choose an optimal parameter value $\tau$ producing tight bounds and keeping the computational costs small at the same time.

\clearpage

\appendix
\section{Numerical results}

\label{sec:appendix}

\begin{table}[tbhp]
	\scriptsize
	\captionsetup{position=top} 
	\caption{IP problem data for sets I, II and III: columns "max(\#var)" and "max(\#con)" contain the maximal number of variables and constaints over all IP problems (for all $\tau$ and all polynomials), columns "mean(\#var)" and "mean(\#con)" display the corresponding average values, "\#prob" is an average amount of IP problems solved to obtain an rTSSOS relaxation and an average time to get a relaxation is given in collumn "time". }\label{tab:ip_sets} 
	\begin{center}
		\begin{tabular}{|c|c|c|c|c|c|c|c|} \hline
			Set & $(n,d,s)$ & max(\#var) & mean(\#var) & max(\#con) & mean(\#con) & \#prob& time \\ \hline
			\multirow{10}{*}{I}& $(8,4,17)$  & 140 & 11.42 & 1483 & 26.84 & 7.40 & 0.276 \\
			& $(8,4,19)$ & 255 & 12.43 & 5339 & 36.95 & 9.31 & 0.356 \\
			& $(8,4,21)$ & 215 & 13.71 & 4020 & 49.83 & 11.21	& 0.447 \\
			& $(9,4,19)$ & 121 & 12.47 & 1697 & 28.96	& 8.46 & 0.320 \\
			& $(9,4,21)$ & 162 & 12.94 & 2483 & 34.48 & 10.54 & 0.409 \\
			& $(9,4,23)$ & 161 & 13.94 & 2481 & 41.04 & 12.33 & 0.487 \\
			& $(10,4,21)$ & 94 & 12.50 & 1111 & 26.08 &	9.65 & 0.379 \\
			& $(10,4,23)$ & 203 & 13.61 & 3473 & 33.29 & 11.57 & 0.465 \\
			& $(11,4,23)$ & 68 & 13.99 & 419 & 28.58 & 10.74 & 0.444 \\
			& $(11,4,25)$ & 197 & 14.34 & 3453 & 32.14 & 12.69 & 0.535 \\ \hline
			\multirow{10}{*}{II}& $(8,4,17)$ &  147 & 22.24 & 1697 & 59.12 & 7.73 & 0.310 \\
			& $(8,4,19)$ & 302 & 26.72 & 6928 & 120.82 & 9.81 & 0.458\\
			& $(8,4,21)$ & 412 & 29.67 & 11002 & 165.23 & 11.62 & 0.638 \\
			& $(9,4,19)$ & 196 & 21.72 & 3451 & 53.13 & 8.89 & 0.353 \\
			& $(9,4,21)$ &  439 & 24.69 & 12217 & 84.91 & 10.89 & 0.479 \\
			& $(9,4,23)$ & 360 & 26.87 & 8819 & 115.42 & 12.79 & 0.584 \\
			& $(10,4,21)$ & 121 & 20.64 & 1697 & 43.79 & 9.91 & 0.388\\
			& $(10,4,23)$ & 326 & 22.53 & 7827 & 58.76 & 11.90 & 0.478 \\
			& $(11,4,23)$ & 122 & 20.38 & 1098 & 41.27 & 10.95 & 0.446 \\
			& $(11,4,25)$ & 177 & 21.34 & 2937 & 46.99 & 12.92 & 0.531\\ \hline
			\multirow{10}{*}{III}& $(8,4,17)$ & 11 & 5.23 & 36 & 10.04 & 6.75 & 0.239 \\
			& $(8,4,19)$ & 11 & 5.38 & 36 & 10.59 & 8.69 & 0.313 \\
			& $(8,4,21)$ & 14 & 5.61 & 43 & 11.51 & 9.78 & 0.355 \\
			& $(9,4,19)$ & 11 & 5.36 & 36 & 10.30 & 7.82 & 0.290 \\
			& $(9,4,21)$ & 11 & 5.36 & 36 & 10.43 & 9.27 & 0.344 \\
			& $(9,4,23)$ & 13 & 5.51 & 41 & 11.02 & 11.01 & 0.407 \\
			& $(10,4,21)$ & 11 & 5.27 & 36 & 10.04 & 8.72 & 0.336 \\
			& $(10,4,23)$ & 12 & 5.51 & 36 & 10.69  & 9.94 & 0.382 \\
			& $(11,4,23)$ & 11 & 5.39 & 36 & 10.29 & 9.40 & 0.391 \\
			& $(11,4,25)$ &11 & 5.48 & 36 & 10.71 & 11.11 & 0.467 \\ \hline
		\end{tabular}
	\end{center} 
\end{table}

	\begin{table}[tbhp]
	\scriptsize
	\captionsetup{position=top} 
	\caption{Numerical results on set I: the amount of problems solved to the accuracy $tol$ and the average computing time for the chordal ($c1$ and $c2$), block ($t1$) and refined TSSOS for $\tau \in \{0.4,0.45,0.5,0.55,0.6,0.65,0.7\}$.}\label{tab:num1} 
	\begin{center}
		\begin{tabular}{|c|c|c|c|c|c||c|c|c|c|c|c|} \hline
			\multicolumn{6}{|c||}{$n = 8, d = 4, s = 17$} & 	\multicolumn{6}{|c|}{$n = 8, d = 4, s = 19$} \\ \hline
			\multicolumn{1}{|c|}{\multirow{2}{*}{$M$}} & \multicolumn{4}{c|}{$tol$} & \multicolumn{1}{c||}{\multirow{2}{*}{time}} & \multicolumn{1}{|c|}{\multirow{2}{*}{$M$}} & \multicolumn{4}{c|}{$tol$} & \multicolumn{1}{c|}{\multirow{2}{*}{time}} \\ \cline{2-5} \cline{8-11}
			& 0.001 & 0.01 & 0.1 & 0.25  & & & 0.001 & 0.01 & 0.1 & 0.25  &  \\ \hline
			$c1$ & 89 & 94 & 97 & 97 & 0.422 & $c1$ & 58 & 64 & 75 & 91 & 0.424\\ 
			$c2$ & 99 & 99 & 100 & 100 & 4.443 & $c2$  & 97 & 97 & 100 & 100 & 16.290\\ 
			$t1$ & 100 & 100 & 100 & 100 & 14.825 & $t1$ & 100 & 100 & 100 & 100 & 38.648\\ 
			$0.4$ & 95 & 95 & 98 & 100 & 1.356 & $0.4$ & 68 & 71 & 81 & 91 & 2.293 \\ 
			$0.45$ & 95 & 95 & 98 & 100 & 1.386 & $0.45$ & 73 & 76 & 81 & 92 & 2.678 \\ 
			$0.5$ & 96 & 96 & 99 & 100 & 1.520 & $0.5$ & 80 & 81 & 83 & 93 & 2.917 \\ 
			$0.55$ & 98 & 98 & 99 & 100 & 2.275  & $0.55$ & 86 & 87 & 90 & 97 & 5.119\\ 
			$0.6$ & 98 & 98 & 99 & 100 & 2.432 & $0.6$ & 87 & 89 & 92 & 100 & 5.419 \\ 
			$0.65$ & 100 & 100 & 100 & 100 & 3.611 & $0.65$ & 96 & 97 & 98 & 100 & 11.835 \\ 
			$0.7$ & 100 & 100 & 100 & 100 & 5.816 & $0.7$ & 97 & 97 & 97 & 100 & 17.239 \\ \hline \hline
			\multicolumn{6}{|c||}{$n = 8, d = 4, s = 21$} & 	\multicolumn{6}{|c|}{$n = 9, d = 4, s = 19$} \\ \hline
			\multicolumn{1}{|c|}{\multirow{2}{*}{$M$}} & \multicolumn{4}{c|}{$tol$} & \multicolumn{1}{c||}{\multirow{2}{*}{time}} & \multicolumn{1}{|c|}{\multirow{2}{*}{$M$}} & \multicolumn{4}{c|}{$tol$} & \multicolumn{1}{c|}{\multirow{2}{*}{time}} \\ \cline{2-5} \cline{8-11}
			& 0.001 & 0.01 & 0.1 & 0.25  & & & 0.001 & 0.01 & 0.1 & 0.25  &  \\ \hline
			$c1$ & 52& 58 & 81 & 89 & 0.519 & $c1$ & 88 & 91 & 97 & 98 & 0.721\\ 
			$c2$ & 98 & 99 & 100 & 100 & 41.089 & $c2$  & 97 & 98 & 99 & 99 & 7.249 \\ 
			$t1$ & 100 & 100 & 100 & 100 & 50.649 & $t1$ & 100 & 100 & 100 & 100 & 36.777 \\ 
			$0.4$ & 71 & 77 & 86 & 95 & 4.380 & $0.4$ & 97 & 97 & 98 & 98 & 2.365 \\ 
			$0.45$ & 78 & 82 & 89 & 95 & 5.117 & $0.45$ & 97 & 97 & 98 & 98 & 2.407 \\ 
			$0.5$ & 86 & 90 & 92 & 97 & 6.623 & $0.5$ & 97 & 97 & 98 & 98 & 2.712 \\ 
			$0.55$ & 94 & 96 & 98 & 99 & 14.820 & $0.55$ & 98 & 99 & 100 & 100 & 4.286 \\ 
			$0.6$ & 95 & 97 & 98 & 100 & 16.061 & $0.6$ & 99 & 99 & 100 & 100 & 4.207 \\ 
			$0.65$ & 96 & 98 & 99 & 100 & 32.672 & $0.65$ & 99 & 99 & 100 & 100 & 6.891 \\
			$0.7$ & 98 & 100 & 100 & 100 & 39.710 & $0.7$ & 99 & 99 & 100 & 100 & 10.146 \\ \hline \hline
			\multicolumn{6}{|c||}{$n = 9, d = 4, s = 21$} & 	\multicolumn{6}{|c|}{$n = 9, d = 4, s = 23$} \\ \hline
			\multicolumn{1}{|c|}{\multirow{2}{*}{$M$}} & \multicolumn{4}{c|}{$tol$} & \multicolumn{1}{c||}{\multirow{2}{*}{time}} & \multicolumn{1}{|c|}{\multirow{2}{*}{$M$}} & \multicolumn{4}{c|}{$tol$} & \multicolumn{1}{c|}{\multirow{2}{*}{time}} \\ \cline{2-5} \cline{8-11}
			& 0.001 & 0.01 & 0.1 & 0.25  & & & 0.001 & 0.01 & 0.1 & 0.25  &  \\ \hline
			$c1$ & 58 & 67 & 89 & 97 & 0.811 & $c1$ & 29 & 33 & 62 & 78 & 1.299 \\ 
			$c2$ & 98 & 98 & 99 & 99 & 43.349 & $c2$ & 98 & 99 & 100 & 100 & 188.425 \\ 
			$t1$ & 100 & 100 & 100 & 100 & 126.055 & $t1$ & 100 & 100 & 100 & 100 & 192.679\\ 
			$0.4$ & 74 & 77 & 88 & 92 & 3.034 & $0.4$ & 59 & 63 & 77 & 87 & 4.635 \\ 
			$0.45$ & 75 & 78 & 89 & 93 & 3.181 & $0.45$ & 63 & 71 & 81 & 89 & 5.314 \\ 
			$0.5$ & 78 & 82 & 92 & 93 & 3.430 & $0.5$ & 69 & 76 & 84 & 92 & 6.562 \\ 
			$0.55$ & 87 & 89 & 94 & 98 & 3.942 & $0.55$ & 85 & 86 & 95 & 99 & 13.608 \\ 
			$0.6$ & 86 & 89 & 94 & 97 & 5.727 & $0.6$ & 85 & 87 & 95 & 98 & 15.283 \\ 
			$0.65$ & 94 & 96 & 99 & 99 & 12.122 & $0.65$ & 100 & 100 & 100 & 100 & 40.967 \\ 
			$0.7$ & 96 & 97 & 99 & 99 & 17.512 & $0.7$ & 99 & 100 & 100 & 100 & 53.307 \\ \hline  \hline	
			\multicolumn{6}{|c||}{$n = 10, d = 4, s = 21$} & 	\multicolumn{6}{|c|}{$n = 10, d = 4, s = 23$} \\ \hline
			\multicolumn{1}{|c|}{\multirow{2}{*}{$M$}} & \multicolumn{4}{c|}{$tol$} & \multicolumn{1}{c||}{\multirow{2}{*}{time}} & \multicolumn{1}{|c|}{\multirow{2}{*}{$M$}} & \multicolumn{4}{c|}{$tol$} & \multicolumn{1}{c|}{\multirow{2}{*}{time}} \\ \cline{2-5} \cline{8-11}
			& 0.001 & 0.01 & 0.1 & 0.25  & & & 0.001 & 0.01 & 0.1 & 0.25  &  \\ \hline
			$c1$ & 72 & 76 & 87 & 95 & 1.435 & $c1$ & 49 & 53 & 74 & 89 & 1.889 \\ 
			$c2$ & 98 & 99 & 100 & 100 & 18.108 & $c2$ & 97 & 97 & 99 & 99 & 119.080 \\ 
			$t1$ & 100 & 100 & 100 & 100 & 101.374 & $t1$ & 100 & 100 & 100 & 100 & 327.538\\ 
			$0.4$ & 84 & 86 & 93 & 97 & 2.914 & $0.4$ & 57 & 61 & 74 & 84 & 4.664 \\ 
			$0.45$ & 84 & 86 & 94 & 98 & 3.145 & $0.45$ & 59 & 63 & 76 & 83 & 5.192 \\ 
			$0.5$ & 85 & 86 & 93 & 98 & 3.363 & $0.5$ & 62 & 67 & 80 & 85 & 5.424 \\ 
			$0.55$ & 87 & 89 & 94 & 98 & 3.942  & $0.55$ & 71 & 75 & 87 & 92 & 9.887 \\ 
			$0.6$ & 89 & 90 & 95 & 98 & 4.126 & $0.6$ & 76 & 79 & 90 & 93 & 10.908 \\ 
			$0.65$ & 93 & 94 & 98 & 98 & 6.484& $0.65$ & 81 & 85 & 95 & 96 & 22.169 \\
			$0.7$ & 94 & 95 & 98 & 99 & 9.678 & $0.7$ & 90 & 93 & 98 & 98 & 30.888 \\ \hline \hline
			\multicolumn{6}{|c||}{$n = 11, d = 4, s = 23$} & 	\multicolumn{6}{|c|}{$n = 11, d = 4, s = 25$} \\ \hline
			\multicolumn{1}{|c|}{\multirow{2}{*}{$M$}} & \multicolumn{4}{c|}{$tol$} & \multicolumn{1}{c||}{\multirow{2}{*}{time}} & \multicolumn{1}{|c|}{\multirow{2}{*}{$M$}} & \multicolumn{4}{c|}{$tol$} & \multicolumn{1}{c|}{\multirow{2}{*}{time}} \\ \cline{2-5} \cline{8-11}
			& 0.001 & 0.01 & 0.1 & 0.25  & & & 0.001 & 0.01 & 0.1 & 0.25  &  \\ \hline
			$c1$ & 70 & 71 & 84 & 93 & 2.312 & $c1$ & 39 & 42 & 66 & 82 & 2.469\\ 
			$c2$ & 97 & 98 & 99 & 100 & 21.751 & $c2$ & 91 & 91 & 97 & 99 & 172.247 \\ 
			$t1$ & 100 & 100 & 100 & 100 & 144.074 & $t1$ & 100 & 100 & 100 & 100 & 569.459 \\ 
			$0.4$ & 81 & 83 & 92 & 96 & 4.839 & $0.4$ & 49 & 53 & 73 & 88 & 6.389 \\ 
			$0.45$ & 82 & 83 & 92 & 96 & 4.894 & $0.45$ & 53 & 57 & 77 & 89 & 6.820 \\ 
			$0.5$ & 82 & 83 & 92 & 96 & 5.055 & $0.5$ & 55 & 58 & 78 & 91 & 7.172 \\ 
			$0.55$ & 84 & 86 & 95 & 97 & 6.268 & $0.55$ & 62 & 67 & 82 & 92 & 9.817 \\ 
			$0.6$ & 83 & 85 & 94 & 97 & 6.372 & $0.6$ & 64 & 70 & 85 & 92 & 9.726 \\ 
			$0.65$ & 89 & 90 & 98 & 99 & 10.408 & $0.65$ & 74 & 77 & 86 & 93 & 19.399 \\
			$0.7$ & 92 & 94 & 98 & 99 & 13.447 & $0.7$ & 78 & 84 & 91 & 96 & 33.441 \\ \hline
		\end{tabular}
	\end{center} 
\end{table}

	\begin{table}[tbhp]
	\scriptsize
	\captionsetup{position=top} 
	\caption{Numerical results on set II: the amount of problems solved to the accuracy $tol$ and the average computing time for the chordal ($c1$ and $c2$), block ($t1$) and refined TSSOS for $\tau \in \{0.4,0.45,0.5,0.55,0.6,0.65,0.7\}$, the amount of problems not solved within the time limit is given in round brackets in column $M$. }\label{tab:num5} 
	\begin{center}
		\begin{tabular}{|c|c|c|c|c|c||c|c|c|c|c|c|} \hline
			\multicolumn{6}{|c||}{$n = 8, d = 4, s = 17$} & 	\multicolumn{6}{|c|}{$n = 8, d = 4, s = 19$} \\ \hline
			\multicolumn{1}{|c|}{\multirow{2}{*}{$M$}} & \multicolumn{4}{c|}{$tol$} & \multicolumn{1}{c||}{\multirow{2}{*}{time}} & \multicolumn{1}{|c|}{\multirow{2}{*}{$M$}} & \multicolumn{4}{c|}{$tol$} & \multicolumn{1}{c|}{\multirow{2}{*}{time}} \\ \cline{2-5} \cline{8-11}
			& 0.001 & 0.01 & 0.1 & 0.25  & & & 0.001 & 0.01 & 0.1 & 0.25  &  \\ \hline
			$c1$ & 90 & 92 & 97 & 98 & 0.515 & $c1$ & 60 & 65 & 81 & 90 & 0.709\\ 
			$c2$ & 100 & 100 & 100 & 100 & 20.403 & $c2$  & 99 & 100 & 100 & 100 & 82.667\\ 
			$t1$ & 100 & 100 & 100 & 100 & 22.099 & $t1$ & 100 & 100 & 100 & 100 & 53.532\\ 
			$0.4$ & 97 & 97 & 99 & 100 & 1.131  & $0.4$ & 97 & 97 & 99 & 100 & 2.009 \\
			$0.45$ & 99 & 99 & 100 & 100 & 1.455 & $0.45$ & 100 & 100 & 100 & 100 & 3.292 \\ 
			$0.5$ & 99 & 99 & 100 & 100 & 1.453 & $0.5$ & 100 & 100 & 100 & 100 & 3.543 \\ 
			$0.55$ & 100 & 100 & 100 & 100 & 1.597  & $0.55$ & 100 & 100 & 100 & 100 & 6.391 \\ 
			$0.6$ & 100 & 100 & 100 & 100 & 2.058 & $0.6$ & 100 & 100 & 100 & 100 & 9.509 \\ 
			$0.65$ & 100 & 100 & 100 & 100 & 2.938 & $0.65$ & 100 & 100 & 100 & 100 & 18.293 \\  
			$0.7$ & 100 & 100 & 100 & 100 & 3.924 & $0.7$ & 100 & 100 & 100 & 100 & 28.847 \\ \hline \hline	
			\multicolumn{6}{|c||}{$n = 8, d = 4, s = 21$} & 	\multicolumn{6}{|c|}{$n = 9, d = 4, s = 19$} \\ \hline
			\multicolumn{1}{|c|}{\multirow{2}{*}{$M$}} & \multicolumn{4}{c|}{$tol$} & \multicolumn{1}{c||}{\multirow{2}{*}{time}} & \multicolumn{1}{|c|}{\multirow{2}{*}{$M$}} & \multicolumn{4}{c|}{$tol$} & \multicolumn{1}{c|}{\multirow{2}{*}{time}} \\ \cline{2-5} \cline{8-11}
			& 0.001 & 0.01 & 0.1 & 0.25  & & & 0.001 & 0.01 & 0.1 & 0.25  &  \\ \hline
			$c1$ & 61 & 70 & 85 & 89 & 0.978 & $c1$ & 84 & 85 & 89 & 91 & 1.078\\ 
			$c2$ & 100 & 100 & 100 & 100 & 155.100 & $c2$  & 98 & 98 & 100 & 100 & 68.172 \\ 
			$t1$ & 100 & 100 & 100 & 100 & 59.671 & $t1$ & 100 & 100 & 100 & 100 & 72.764 \\ 
			$0.4$ & 99 & 99 & 99 & 99 & 4.152 & $0.4$ & 89 & 91 & 93 & 96 & 1.680 \\ 
			$0.45$ & 100 & 100 & 100 & 100 & 8.234 & $0.45$ & 92 & 94 & 95 & 100 & 1.805 \\ 
			$0.5$ & 100 & 100 & 100 & 100 & 11.753 & $0.5$ & 92 & 93 & 95 & 99 & 1.832 \\ 
			$0.55$ & 100 & 100 & 100 & 100 & 25.417 & $0.55$ & 95 & 96 & 97 & 99 & 1.909 \\ 
			$0.6$ & 100 & 100 & 100 & 100 & 34.430 & $0.6$ &  95 & 96 & 97 & 98 & 2.091 \\ 
			$0.65$ & 100 & 100 & 100 & 100 & 51.482 & $0.65$ & 96 & 97 & 99 & 99 & 2.479 \\
			$0.7$ & 100 & 100 & 100 & 100 & 59.696 & $0.7$ & 96 & 97 & 98 & 99 & 3.213 \\ \hline \hline	
			\multicolumn{6}{|c||}{$n = 9, d = 4, s = 21$} & 	\multicolumn{6}{|c|}{$n = 9, d = 4, s = 23$} \\ \hline
			\multicolumn{1}{|c|}{\multirow{2}{*}{$M$}} & \multicolumn{4}{c|}{$tol$} & \multicolumn{1}{c||}{\multirow{2}{*}{time}} & \multicolumn{1}{|c|}{\multirow{2}{*}{$M$}} & \multicolumn{4}{c|}{$tol$} & \multicolumn{1}{c|}{\multirow{2}{*}{time}} \\ \cline{2-5} \cline{8-11}
			& 0.001 & 0.01 & 0.1 & 0.25  & & & 0.001 & 0.01 & 0.1 & 0.25  &  \\ \hline
			$c1$ & 60 & 64 & 77 & 86 & 0.956 & $c1$ & 36 & 42 & 58 & 72 & 1.147 \\ 
			$c2$ & 97 & 98 & 100 & 100 & 593.791 & $c2$  & 100 & 100 & 100 & 100 & 1452.443 \\ 
			$t1$ & 99 & 100 & 100 & 100 & 199.432 & $t1$ & 100 & 100 & 100 & 100 & 256.519 \\ 
			$0.4$ & 74 & 76 & 90 & 96 & 2.568 & $0.4$ & 77 & 80 & 91 & 95 & 4.049 \\ 
			$0.45$ & 85 & 86 & 94 & 98 & 2.974& $0.45$ & 90 & 92 & 96 & 99 & 5.746  \\ 
			$0.5$ & 88 & 90 & 95 & 99 & 3.092 & $0.5$ & 90 & 93 & 98 & 99 & 6.801 \\ 
			$0.55$ & 92 & 94 & 99 & 100 & 4.123 & $0.55$ & 96 & 98 & 99 & 99 & 11.639 \\ 
			$0.6$ & 94 & 96 & 99 & 100 & 5.198 & $0.6$ & 100 & 100 & 100 & 100 & 19.413 \\ 
			$0.65$ & 99 & 100 & 100 & 100 & 11.056 & $0.65$ & 100 & 100 & 100 & 100 & 47.260 \\
			$0.7$ & 98 & 100 & 100 & 100 & 23.030 & $0.7$ & 100 & 100 & 100 & 100 & 97.0419 \\ \hline  \hline	
			\multicolumn{6}{|c||}{$n = 10, d = 4, s = 21$} & 	\multicolumn{6}{|c|}{$n = 10, d = 4, s = 23$} \\ \hline
			\multicolumn{1}{|c|}{\multirow{2}{*}{$M$}} & \multicolumn{4}{c|}{$tol$} & \multicolumn{1}{c||}{\multirow{2}{*}{time}} & \multicolumn{1}{|c|}{\multirow{2}{*}{$M$}} & \multicolumn{4}{c|}{$tol$} & \multicolumn{1}{c|}{\multirow{2}{*}{time}} \\ \cline{2-5} \cline{8-11}	
			& 0.001 & 0.01 & 0.1 & 0.25  & & & 0.001 & 0.01 & 0.1 & 0.25  &  \\ \hline
			$c1$ & 84 & 86 & 92 & 96 & 1.449 & $c1$ & 48 & 50 & 67 & 80 & 1.491 \\ 
			$c2$ & 98 & 99 & 100 & 100 & 96.794 & $c2$(6) & 93 & 94 & 94 & 94 & 1325.003\\ 
			$t1$ & 100 & 100 & 100 & 100 & 128.946 & $t1$ & 100 & 100 & 100 & 100 & 657.311 \\ 
			$0.4$ & 94 & 95 & 97 & 98 & 2.845 & $0.4$ & 72 & 74 & 85 & 91 & 3.768 \\ 
			$0.45$ & 94 & 95 & 97 & 99 & 3.061 & $0.45$ & 72 & 74 & 89 & 94 & 4.077  \\ 
			$0.5$ & 96 & 97 & 99 & 99 & 3.082 & $0.5$ & 74 & 76 & 89 & 94 & 4.206 \\ 
			$0.55$ & 95 & 96 & 99 & 100 & 3.281 & $0.55$ & 77 & 81 & 89 & 97 & 4.485 \\ 
			$0.6$ & 95 & 96 & 98 & 99 & 3.536 & $0.6$ & 82 & 83 & 91 & 98 & 4.923 \\ 
			$0.65$ & 96 & 97 & 99 & 99 & 4.079 & $0.65$ & 87 & 87 & 95 & 99 & 6.436 \\
			$0.7$ &  95 & 97 & 99 & 99 & 5.594 & $0.7$ & 92 & 92 & 99 & 100 & 10.289 \\ \hline \hline
			\multicolumn{6}{|c||}{$n = 11, d = 4, s = 23$} & 	\multicolumn{6}{|c|}{$n = 11, d = 4, s = 25$} \\ \hline
			\multicolumn{1}{|c|}{\multirow{2}{*}{$M$}} & \multicolumn{4}{c|}{$tol$} & \multicolumn{1}{c||}{\multirow{2}{*}{time}} & \multicolumn{1}{|c|}{\multirow{2}{*}{$M$}} & \multicolumn{4}{c|}{$tol$} & \multicolumn{1}{c|}{\multirow{2}{*}{time}} \\ \cline{2-5} \cline{8-11}
			& 0.001 & 0.01 & 0.1 & 0.25  & & & 0.001 & 0.01 & 0.1 & 0.25  &  \\ \hline
			$c1$ & 74 & 75 & 87 & 91 & 2.801 & $c1$ & 35 & 40 & 53 & 74 & 2.415\\ 
			$c2$ & 94 & 95 & 96 & 96 & 98.126 & $c2$(6) & 88 & 90 & 93 & 94 & 1211.669\\ 
			$t1$ & 99 & 99 & 100 & 100 & 140.913 & $t1$ & 100 & 100 & 100 & 100  & 888.690 \\ 
			$0.4$ & 86 & 86 & 93 & 99 & 4.174 & $0.4$ & 57 & 61 & 74 & 86 & 5.459 \\ 
			$0.45$ & 85 & 85 & 93 & 99 & 4.377 & $0.45$ & 57 & 63 & 80 & 90 & 5.773 \\ 
			$0.5$ & 86 & 86 & 94 & 99 & 4.414 & $0.5$ & 58 & 63 & 80 & 90 & 5.799 \\ 
			$0.55$ & 85 & 86 & 94 & 99 & 4.542 & $0.55$ & 60 & 65 & 80 & 92 & 6.213 \\ 
			$0.6$ & 87 & 88 & 95 & 99 & 4.788 & $0.6$ & 63 & 69 & 81 & 91 & 6.551 \\ 
			$0.65$ & 89 & 90 & 95 & 98 & 5.028 & $0.65$ & 69 & 74 & 84 & 92 & 7.696 \\
			$0.7$ & 91 & 91 & 95 & 98 & 5.391 & $0.7$ & 71 & 74 & 85 & 93 & 8.858 \\ \hline
		\end{tabular}
	\end{center} 
\end{table}

\begin{table}[tbhp]
	\scriptsize
	\captionsetup{position=top} 
	\caption{Numerical results on set III: the amount of problems solved to the accuracy $tol$ and the average computing time for the chordal ($c1$ and $c2$), block ($t1$) and refined TSSOS for $\tau \in \{0.4,0.45,0.5,0.55,0.6,0.65,0.7\}$.}\label{tab:num7} 
	\begin{center}
		\begin{tabular}{|c|c|c|c|c|c||c|c|c|c|c|c|} \hline
			\multicolumn{6}{|c||}{$n = 8, d = 4, s = 17$} & 	\multicolumn{6}{|c|}{$n = 8, d = 4, s = 19$} \\ \hline
			\multicolumn{1}{|c|}{\multirow{2}{*}{$M$}} & \multicolumn{4}{c|}{$tol$} & \multicolumn{1}{c||}{\multirow{2}{*}{time}} & \multicolumn{1}{|c|}{\multirow{2}{*}{$M$}} & \multicolumn{4}{c|}{$tol$} & \multicolumn{1}{c|}{\multirow{2}{*}{time}} \\ \cline{2-5} \cline{8-11}
			& 0.001 & 0.01 & 0.1 & 0.25  & & & 0.001 & 0.01 & 0.1 & 0.25  &  \\ \hline
			$c1$ & 85 & 92 & 99 & 100 & 0.400 & $c1$ & 70 & 80 & 92 & 94 & 0.384\\ 
			$c2$ & 99 & 100 & 100 & 100 & 1.217 & $c2$  & 97 & 97 & 99 & 99 & 2.908 \\ 
			$t1$ & 100 & 100 & 100 & 100 & 9.674 & $t1$ & 100 & 100 & 100 & 100 & 21.355\\ 
			$0.4$ & 95 & 98 & 99 & 100 & 1.383 & $0.4$ & 80 & 82 & 88 & 94 & 2.302 \\
			$0.45$ & 96 & 98 & 98 & 100 & 1.503 & $0.45$ & 86 & 88 & 94 & 96 & 2.763 \\ 
			$0.5$ & 96 & 98 & 98 & 100 & 1.497 & $0.5$ & 86 & 88 & 94 & 96 & 2.791 \\ 
			$0.55$ & 100 & 100 & 100 & 100 & 2.557 & $0.55$ & 94 & 94 & 97 & 99 & 5.448 \\ 
			$0.6$ & 100 & 100 & 100 & 100 & 2.653 & $0.6$ & 95 & 95 & 97 & 99 & 5.685 \\ 
			$0.65$ & 100 & 100 & 100 & 100 & 3.790 & $0.65$ & 96 & 96 & 98 & 99 & 8.013 \\  
			$0.7$ & 100 & 100 & 100 & 100 & 5.996 & $0.7$ & 99 & 99 & 99 & 99 & 11.943 \\ \hline \hline
			\multicolumn{6}{|c||}{$n = 8, d = 4, s = 21$} & 	\multicolumn{6}{|c|}{$n = 9, d = 4, s = 19$} \\ \hline
			\multicolumn{1}{|c|}{\multirow{2}{*}{$M$}} & \multicolumn{4}{c|}{$tol$} & \multicolumn{1}{c||}{\multirow{2}{*}{time}} & \multicolumn{1}{|c|}{\multirow{2}{*}{$M$}} & \multicolumn{4}{c|}{$tol$} & \multicolumn{1}{c|}{\multirow{2}{*}{time}} \\ \cline{2-5} \cline{8-11}
			& 0.001 & 0.01 & 0.1 & 0.25  & & & 0.001 & 0.01 & 0.1 & 0.25  &  \\ \hline
			$c1$ &63 & 72 & 90 & 98 & 0.418 & $c1$ & 90 & 95 & 100 & 100 & 0.681 \\ 
			$c2$ & 97 & 99 & 100 & 100 & 4.275 & $c2$  & 98 & 100 & 100 & 100 & 2.687 \\ 
			$t1$ & 100 & 100 & 100 & 100 & 27.939 & $t1$ & 100 & 100 & 100 & 100 & 25.822 \\ 
			$0.4$ & 81 & 85 & 92 & 95 & 3.844 & $0.4$ & 92 & 93 & 97 & 98 & 2.490 \\ 
			$0.45$ & 86 & 88 & 98 & 98 & 4.737 & $0.45$ & 95 & 96 & 99 & 99 & 2.728 \\ 
			$0.5$ & 86 & 88 & 98 & 98 & 4.723 & $0.5$ & 95 & 96 & 99 & 99 & 2.734 \\ 
			$0.55$ & 96 & 96 & 98 & 99 & 10.024 & $0.55$ & 98 & 98 & 99 & 100 & 5.588 \\ 
			$0.6$ & 97 & 97 & 99 & 99 & 10.801 & $0.6$ & 98 & 98 & 99 & 100 & 5.800 \\ 
			$0.65$ & 98 & 99 & 100 & 100 & 14.416 & $0.65$ & 99 & 99 & 100 & 100 & 9.562 \\ 
			$0.7$ & 100 & 100 & 100 & 100 & 18.384 & $0.7$ & 100 & 100 & 100 & 100 & 14.853 \\ \hline \hline		
			\multicolumn{6}{|c||}{$n = 9, d = 4, s = 21$} & 	\multicolumn{6}{|c|}{$n = 9, d = 4, s = 23$} \\ \hline
			\multicolumn{1}{|c|}{\multirow{2}{*}{$M$}} & \multicolumn{4}{c|}{$tol$} & \multicolumn{1}{c||}{\multirow{2}{*}{time}} & \multicolumn{1}{|c|}{\multirow{2}{*}{$M$}} & \multicolumn{4}{c|}{$tol$} & \multicolumn{1}{c|}{\multirow{2}{*}{time}} \\ \cline{2-5} \cline{8-11}
			& 0.001 & 0.01 & 0.1 & 0.25  & & & 0.001 & 0.01 & 0.1 & 0.25  &  \\ \hline
			$c1$ & 73 & 79 & 93 & 96 & 0.700 & $c1$ & 60 & 72 & 95 & 98 & 0.815 \\ 
			$c2$ & 100 & 100 & 100 & 100 & 4.312 & $c2$  & 99 & 99 & 100 & 100 & 10.799 \\ 
			$t1$ & 100 & 100 & 100 & 100 & 48.051 & $t1$ & 100 & 100 & 100 & 100 & 87.657 \\ 
			$0.4$ & 87 & 89 & 95 & 95 & 3.518 & $0.4$ & 76 & 82 & 90 & 97 & 6.345 \\ 
			$0.45$ & 87 & 89 & 95 & 95 & 4.398 & $0.45$ & 80 & 87 & 94 & 99 & 8.318 \\ 
			$0.5$ & 87 & 89 & 95 & 95 & 4.416  & $0.5$ & 80 & 87 & 94 & 99 & 8.378 \\
			$0.55$ & 95 & 96 & 98 & 98 & 8.583 & $0.55$ & 92 & 96 & 98 & 99 & 21.360 \\ 
			$0.6$ & 96 & 97 & 99 & 99 & 9.081 & $0.6$ & 93 & 96 & 98 & 99 & 22.715 \\ 
			$0.65$ & 98 & 99 & 99 & 99 & 17.191 & $0.65$ & 97 & 99 & 100 & 100 & 35.411 \\ 
			$0.7$ & 100 & 100 & 100 & 100 & 26.776 & $0.7$ & 99 & 99 & 100 & 100 & 51.0111 \\ \hline  \hline
			\multicolumn{6}{|c||}{$n = 10, d = 4, s = 21$} & 	\multicolumn{6}{|c|}{$n = 10, d = 4, s = 23$} \\ \hline
			\multicolumn{1}{|c|}{\multirow{2}{*}{$M$}} & \multicolumn{4}{c|}{$tol$} & \multicolumn{1}{c||}{\multirow{2}{*}{time}} & \multicolumn{1}{|c|}{\multirow{2}{*}{$M$}} & \multicolumn{4}{c|}{$tol$} & \multicolumn{1}{c|}{\multirow{2}{*}{time}} \\ \cline{2-5} \cline{8-11}	
			& 0.001 & 0.01 & 0.1 & 0.25  & & & 0.001 & 0.01 & 0.1 & 0.25  &  \\ \hline
			$c1$ & 87 & 87 & 99 & 100 & 1.323& $c1$ & 83 & 89 & 99 & 99 & 1.402 \\ 
			$c2$ & 99 & 100 & 100 & 100 & 4.039 & $c2$ & 99 & 99 & 100 & 100 & 8.701 \\ 
			$t1$ & 100 & 100 & 100 & 100 & 60.871 & $t1$ & 100 & 100 & 100 & 100 & 119.759 \\ 
			$0.4$ & 86 & 88 & 96 & 98 & 4.345 & $0.4$ & 87 & 92 & 96 & 97 & 5.732 \\ 
			$0.45$ & 87 & 87 & 96 & 98 & 4.697 & $0.45$ & 89 & 92 & 96 & 97 & 7.699  \\ 
			$0.5$ & 87 & 87 & 96 & 98 & 4.720 & $0.5$ & 89 & 92 & 96 & 97 & 7.712 \\ 
			$0.55$ & 90 & 90 & 98 & 99 & 9.687 & $0.55$ & 96 & 98 & 100 & 100 & 16.947 \\ 
			$0.6$ & 91 & 91 & 98 & 99 & 10.301 & $0.6$ & 96 & 98 & 100 & 100 & 17.674 \\ 
			$0.65$ & 93 & 93 & 99 & 100 & 19.782 & $0.65$ & 98 & 99 & 100 & 100 & 37.969 \\ 
			$0.7$ & 100 & 100 & 100 & 100 & 32.182 & $0.7$ & 100 & 100 & 100 & 100 & 54.760 \\ \hline \hline
			\multicolumn{6}{|c||}{$n = 11, d = 4, s = 23$} & 	\multicolumn{6}{|c|}{$n = 11, d = 4, s = 25$} \\ \hline
			\multicolumn{1}{|c|}{\multirow{2}{*}{$M$}} & \multicolumn{4}{c|}{$tol$} & \multicolumn{1}{c||}{\multirow{2}{*}{time}} & \multicolumn{1}{|c|}{\multirow{2}{*}{$M$}} & \multicolumn{4}{c|}{$tol$} & \multicolumn{1}{c|}{\multirow{2}{*}{time}} \\ \cline{2-5} \cline{8-11}
			& 0.001 & 0.01 & 0.1 & 0.25  & & & 0.001 & 0.01 & 0.1 & 0.25  &  \\ \hline
			$c1$ & 84 & 96 & 99 & 99 & 2.454 & $c1$ & 68 & 82 & 95 & 97 & 2.582 \\ 
			$c2$ & 100 & 100 & 100 & 100 & 8.315 & $c2$ & 98 & 99 & 100 & 100 & 15.558\\ 
			$t1$ & 100 & 100 & 100 & 100 & 144.482 & $t1$ & 100 & 100 & 100 & 100 & 315.069 \\ 
			$0.4$ & 89 & 92 & 98 & 99 & 7.115 & $0.4$ & 76 & 80 & 90 & 96 & 10.223 \\ 
			$0.45$ & 90 & 91 & 97 & 99 & 8.482 & $0.45$ & 75 & 78 & 89 & 96 & 13.510 \\ 
			$0.5$ & 90 & 91 & 97 & 99 & 8.510 & $0.5$ & 75 & 78 & 89 & 96 & 13.533 \\ 
			$0.55$ & 96 & 96 & 99 & 100 & 16.579 & $0.55$ & 85 & 87 & 95 & 99 & 30.821 \\ 
			$0.6$ & 95 & 95 & 99 & 100 & 17.159 & $0.6$ & 85 & 87 & 95 & 99 & 32.885 \\ 
			$0.65$ & 98 & 98 & 98 & 100 & 41.036 & $0.65$ & 94 & 94 & 97 & 100 & 72.780 \\ 
			$0.7$ & 99 & 99 & 99 & 99 & 62.001 & $0.7$ & 96 & 96 & 98 & 100 & 114.329 \\ \hline	
		\end{tabular}
	\end{center} 
\end{table}

 \begin{table}[tbhp]
	\scriptsize
	\captionsetup{position=top} 
	\caption{Numerical results on set IV: the amount of problems solved to the accuracy $tol$ by method $M$, i.e., with $\frac{|\Theta_{M} - \Theta_B|}{|\Theta_B|} \leq tol$ and the average computing time. For some polynomials from this set the refined TSSOS method with some parameter values $\tau$ generates an infeasible SDP relaxation. The amount of such problems is given in column $N_u$.}\label{tab:num3} 
	\begin{center}
		\begin{tabular}{||c||c|c|c|c|c|c||c|c|c|c|c|c||} \hline \hline
			\multicolumn{1}{||c||}{\multirow{3}{*}{$M$}} & \multicolumn{6}{|c||}{$(8,8,2,4,8,4)$} & 	\multicolumn{6}{|c||}{$(8,8,4,6,8,4)$} \\ \cline{2-13}
			& \multicolumn{4}{c|}{$tol$} & \multicolumn{1}{c|}{\multirow{2}{*}{time}}&\multicolumn{1}{c||}{\multirow{2}{*}{$N_u$}} & \multicolumn{4}{c|}{$tol$} & \multicolumn{1}{c|}{\multirow{2}{*}{time}}&\multicolumn{1}{c||}{\multirow{2}{*}{$N_u$}} \\ \cline{2-5} \cline{8-11}
			& 0.001 & 0.01 & 0.1 & 0.25  & & & 0.001 & 0.01 & 0.1 & 0.25  &  &\\ \hline
			$c1$ & 14 & 20 & 40 & 59 & 3.96 & 0 & 20 & 31 & 56 & 76 & 6.40 & 0\\ 
			$c2$ & 51 & 57 & 71 & 78 & 37.45 & 0 &  65 & 75 & 93 & 98 & 350.84 & 0\\ 
			$t1$ & 90 & 92 & 98 & 100 & 91.90 & 0 & 88 & 90 & 94 & 97 & 309.98 & 0\\ 
			$0.4$ & 27 & 36 & 63 & 74 & 5.34 & 5 & 47 & 55 & 73 & 84 & 8.89 & 3 \\ 
			$0.45$ & 29 & 41 & 65 & 77 & 5.69 & 5 & 54 & 61 & 77 & 84 & 11.26 & 3 \\ 
			$0.5$ & 29 & 42 & 65 & 77 & 6.11 & 5 & 56 & 62 & 77 & 87 & 12.72 & 3 \\ 
			$0.55$ & 45 & 53 & 75 & 81 & 10.24 & 3 & 68 & 75 & 87 & 93 & 21.86 & 1 \\ 
			$0.6$ & 45 & 53 & 75 & 81 & 10.23 & 3 & 70 & 76 & 88 & 92 & 30.60 & 1 \\ 
			$0.65$ & 53 & 61 & 81 & 88 & 16.66 & 2 &  73 & 77 & 89 & 94 & 48.66 & 1 \\
			$0.7$ & 61 & 69 & 88 & 95 & 28.35 & 0 & 77 & 80 & 89 & 95 & 71.72 & 1 \\ 
			$0.75$ & 68 & 73 & 91 & 96 & 35.53 & 0 & 77 & 82 & 91 & 96 & 99.77 & 1 \\ \hline \hline
		\end{tabular}
	\end{center} 
\end{table}

\begin{table}[tbhp]
	\scriptsize
	\captionsetup{position=top} 
	\caption{Numerical results for the chordal ($c1$ and $c2$), block ($t1$) and refined TSSOS with different parameter values $\tau_0 = \tau_1 = \tau$, $\tau \in \set{0.1, 0.2, 0.3, 0.4, 0.5, 0.6, 0.7}$ in the constrained case with $\mathbf{K} := \{(x_1, \ldots, x_n) \in \mathbb{R}^n \; | \; g_1 = 25 - (x_1^2 + \cdots + x_n^2) \geq 0\}$,  the amount of problems not solved within the time limit is given in column $N_u$.} \label{tab:num4} 
	\begin{center}
		\begin{tabular}{||c||c|c|c|c|c|c||c|c|c|c|c|c||} \hline \hline
			\multicolumn{1}{||c||}{\multirow{3}{*}{$M$}} & \multicolumn{6}{|c||}{$n = 8, d = 4, s = 21, \hat{d} = 4$} & 	\multicolumn{6}{|c||}{$n = 9, d = 4, s = 21, \hat{d} = 4$} \\ \cline{2-13}
			& \multicolumn{4}{c|}{$tol$} & \multicolumn{1}{c|}{\multirow{2}{*}{time}}&\multicolumn{1}{c||}{\multirow{2}{*}{$N_u$}} & \multicolumn{4}{c|}{$tol$} & \multicolumn{1}{c|}{\multirow{2}{*}{time}}&\multicolumn{1}{c||}{\multirow{2}{*}{$N_u$}} \\ \cline{2-5} \cline{8-11}
			& 0.001 & 0.01 & 0.1 & 0.25  & & & 0.001 & 0.01 & 0.1 & 0.25  & & \\ \hline
			$c1$ & 53 & 61 & 82 & 90 & 2.13 & 0 & 64 & 71 & 90 & 98 & 2.35 & 0\\ 
			$c2$ & 98 & 99 & 100 & 100 & 122.79 & 0 & 99 & 99 & 100 & 100 & 287.51 & 0 \\ 
			$t1$ & 100 & 100 & 100 & 100 & 68.61 & 0 &  100 & 100 & 100 & 100 & 175.58 & 0\\ 
			$0.1$ & 44 & 46 & 65 & 80 & 2.61 & 0 &  73 & 76 & 88 & 92 & 3.28 & 0\\ 
			$0.2$ & 58 & 60 & 75 & 87 & 2.83 & 0 & 73 & 75 & 85 & 91 & 3.35 & 0\\ 
			$0.3$ & 67 & 70 & 84 & 89 & 3.77 & 0 & 78 & 80 & 89 & 94 & 3.90 & 0\\ 
			$0.4$ & 84 & 86 & 91 & 97 & 6.08 & 0 & 81 & 82 & 90 & 95 & 5.76 & 0\\ 
			$0.5$ & 92 & 95 & 95 & 99 & 10.80 & 0 & 87 & 90 & 94 & 96 & 8.00 & 0\\ 
			$0.6$ & 98 & 99 & 100 & 100 & 30.33 & 0 & 93 & 94 & 97 & 98 & 17.18 & 0\\
			$0.7$ & 100 & 100 & 100 & 100 & 45.04 & 0 & 99 & 99 & 100 & 100 & 34.36 & 0 \\\hline \hline
			\multicolumn{1}{||c||}{\multirow{3}{*}{$M$}} & \multicolumn{6}{|c||}{$n = 9, d = 4, s = 23, \hat{d} = 4$} & 	\multicolumn{6}{|c||}{$n = 10, d = 4, s = 23, \hat{d} = 4$} \\ \cline{2-13}
			& \multicolumn{4}{c|}{$tol$} & \multicolumn{1}{c|}{\multirow{2}{*}{time}}&\multicolumn{1}{c||}{\multirow{2}{*}{$N_u$}} & \multicolumn{4}{c|}{$tol$} & \multicolumn{1}{c|}{\multirow{2}{*}{time}}&\multicolumn{1}{c||}{\multirow{2}{*}{$N_u$}} \\ \cline{2-5} \cline{8-11}
			& 0.001 & 0.01 & 0.1 & 0.25  & & & 0.001 & 0.01 & 0.1 & 0.25  & & \\ \hline
			$c1$ & 35 & 38 & 63 & 78 & 3.22 & 0 & 51 & 58 & 80 & 92 & 3.73 & 0\\ 
			$c2$ & 97 & 99 & 100 & 100 & 914.31 & 0 & 96 & 96 & 97 & 97 & 862.79 & 2 \\ 
			$t1$ & 100 & 100 & 100 & 100 & 258.63 & 0 &  100 & 100 & 100 & 100 & 432.98 & 0\\ 
			$0.1$ & 48 & 49 & 67 & 77 & 4.17 & 0 & 61 & 64 & 81 & 87 & 4.96 & 0\\ 
			$0.2$ & 50 & 51 & 71 & 84 & 4.48 & 0 & 63 & 64 & 81 & 88 & 5.30 & 0\\ 
			$0.3$ & 57 & 58 & 76 & 84 & 6.06 & 0 & 66 & 69 & 82 & 89 & 6.10 & 0\\ 
			$0.4$ & 69 & 75 & 86 & 92 & 8.93 & 0 & 70 & 72 & 85 & 91 & 7.91 & 0\\ 
			$0.5$ & 84 & 87 & 94 & 97 & 16.20 & 0 & 80 & 82 & 93 & 95 & 11.10 & 0\\ 
			$0.6$ & 94 & 97 & 100 & 100 & 45.66 & 0 & 87 & 87 & 96 & 96 & 31.40 & 0\\
			$0.7$ & 98 & 100 & 100 & 100 & 96.83 & 0 & 96 & 97 & 99 & 99 & 58.31 & 0 \\\hline \hline
		\end{tabular}
	\end{center} 
\end{table}

\begin{figure}[tbhp]
	\centering 
	\includegraphics[trim = 110 50 140 100, clip, width=1\linewidth]{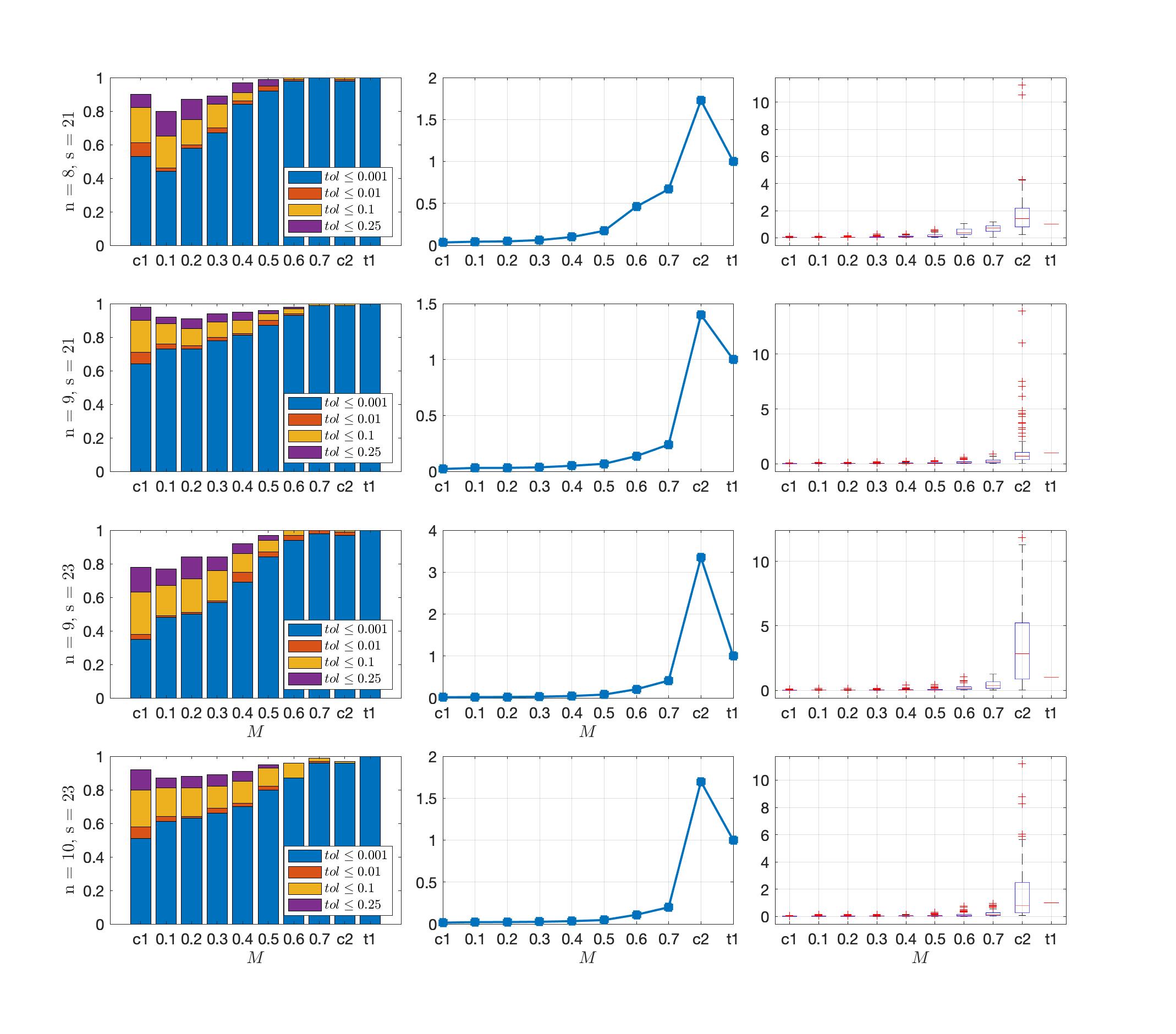}
	\vspace{-30pt}
	\caption{Numerical results on polynomials from set I for the chordal ($c1$ and $c2$), block ($t1$) and refined TSSOS with different parameter values $\tau_0 = \tau_1 = \tau$, $\tau \in \set{0.1, 0.2, 0.3, 0.4, 0.5, 0.6, 0.7}$ with $\mathbf{K} := \{(x_1, \ldots, x_n) \in \mathbb{R}^n \; | \; g_1 = 25 - (x_1^2 + \cdots + x_n^2) \geq 0\}$ the relaxation order $\hat{d} = 4$: plots in column 1 show the fraction of 100 problems solved to a certain accuracy by method $M$, i.e., with $\frac{|\Theta_{M} - \Theta_B|}{|\Theta_B|} \leq tol$, the ratio of CPU time for method $M$ to CPU time of TSSOS ($k=1$), i.e., $T_{M} / T_{t1}$, averaged over 100 problems is depicted in column 2, box plots showing the spread of values $T_{M} / T_{t1}$ are given in column 3, where the central mark indicates the median, and the bottom and top edges of the box indicate the 25th and 75th percentiles, respectively. Note that the CPU time limit for the SDP solver is 5000 seconds. If for some problem the solver fails to terminate within this time on a relaxation produced by method $M$, this problem is considered to be unsolved by this method. }
	\label{fig:con}
\end{figure}

\bibliographystyle{siamplain}
\bibliography{references}
\end{document}